\tikzset{>={Latex[width=1.5mm,length=1.5mm]}}
\DeclareMathOperator*{\E}{\mathbb{E}}
\newtheorem{theorem}{Theorem}[section]
\newtheorem{lemma}[theorem]{Lemma}
\theoremstyle{definition}
\newtheorem{definition}[theorem]{Definition}
\title{Generalized Kings and Single-Elimination Winners in Random Tournaments}
\author{
Pasin Manurangsi\\Google Research
\and
Warut Suksompong\\National University of Singapore
}
\date{\vspace{-5ex}}
\begin{document}

\maketitle

\begin{abstract}
Tournaments can be used to model a variety of practical scenarios including sports competitions and elections.
A natural notion of strength of alternatives in a tournament is a generalized king: an alternative is said to be a \emph{$k$-king} if it can reach every other alternative in the tournament via a directed path of length at most $k$.
In this paper, we provide an almost complete characterization of the probability threshold such that all, a large number, or a small number of alternatives are $k$-kings with high probability in two random models.
We show that, perhaps surprisingly, all changes in the threshold occur in the range of constant $k$, with the biggest change being between $k=2$ and $k=3$.
In addition, we establish an asymptotically tight bound on the probability threshold for which all alternatives are likely able to win a single-elimination tournament under some bracket.
\end{abstract}  

\section{Introduction}

Social choice theory is the study of how to aggregate individual preferences and opinions of agents on a set of alternatives in order to reach a collective decision.
In many practical situations, the relationship between the alternatives is represented by a \emph{dominance relation}, which specifies the relative strength of the alternatives in any pairwise comparison.
For example, in sports competitions the dominance relation signifies the match outcome when two players or teams play each other, while in elections the relation represents the pairwise majority comparisons among the candidates.
The structure consisting of the alternatives and their dominance relation is called a \emph{tournament}, and the analysis of tournament winner selection methods---also known as \emph{tournament solutions}---has received significant attention from researchers in the past few decades \cite{BrandtBrHa16,Laslier97}.

Among the vast array of tournament solutions proposed in the literature, two of the earliest and best-known ones are the \emph{top cycle} \cite{Good71,Miller77,Schwartz72} and the \emph{uncovered set} \cite{Fishburn77,Miller80}.
An alternative belongs to the top cycle if it can reach every other alternative via a directed path in the tournament.
Note that if the tournament contains $n$ alternatives, any such path has length $n-1$ or less (the length of a path refers to the number of edges in the path).\footnote{The bound $n-1$ cannot be improved. To see this, consider a tournament with alternatives $x_1,\dots,x_n$ such that $x_i$ dominates $x_j$ if $i-j \ge 2$ or $j-i = 1$. Alternative $x_1$ can reach every other alternative, but it cannot reach $x_n$ via a path of length $n-2$ or less.}
Similarly, the uncovered set---also known as the set of \emph{kings} \cite{Maurer80}---consists of the alternatives that can reach every other alternative via a path of length at most two.
It is clear from the definitions that the uncovered set is always a subset of the top cycle.
Moreover, both tournament solutions can be viewed as special cases of a generalized notion of kings called \emph{$k$-kings}, which correspond to the alternatives that can reach every other alternative via a path of length at most $k$.
Indeed, the uncovered set is the set of $2$-kings, while the top cycle contains precisely the $(n-1)$-kings.
Examples of tournaments and $k$-kings can be found in Section~\ref{sec:prelim}.

Given that tournament solutions are meant to distinguish the best alternatives from the rest, it is natural to ask how selective each tournament solution is.
Moon and Moser~\cite{MoonMo62} and Fey~\cite{Fey08} addressed this question and showed that all alternatives are likely to be $2$-kings, and hence also $(n-1)$-kings, when the tournament is large.
In particular, their results hold under the \emph{uniform random model}, wherein each edge is oriented in one direction or the other with equal probability independently of other edges.
Saile and Suksompong~\cite{SaileSu20} extended these results to the \emph{generalized random model}, in which the orientation of each edge is determined by probabilities within the range $[p, 1-p]$ for some parameter $p \le 1/2$, and these probabilities may vary across edges.
The generalized random model allowed these authors to demonstrate a difference between the two tournament solutions---while all alternatives are likely to be $(n-1)$-kings as long as $p\in\omega(1/n)$, the same is true for $2$-kings only when $p\in\Omega(\sqrt{\log n/n})$, so the two thresholds differ by roughly $\Theta(\sqrt{n})$.
This raises the following question: How does the probability threshold change as we transition from $k=2$ to $k=n-1$?
Does it already decrease at around $k = \sqrt{n}$, or does it remain the same until, say, $k \approx n/2$?

\renewcommand{\arraystretch}{1.3}
\begin{table*}[!t]
\centering
    \begin{tabular}{| c | c | c | c |}
    \hline
    \multicolumn{2}{ |c| }{Tournament solution} & Condorcet random model  & Generalized random model   \\ \hline \hline
    \multirow{4}{*}{$k$-kings} & $k=2$ & $\Omega(\sqrt{\log n/n})$ \cite{SaileSu20} & $\Omega(\sqrt{\log n/n})$ \cite{SaileSu20} \\ \cline{2-4}
    & $3\leq k\leq 4$ & $\Omega(\log n/n)$ (Thm.~\ref{thm:3king-positive}, \ref{thm:4king-negative}) & $\Omega(\log n/n)$ (Thm.~\ref{thm:3king-positive}, \ref{thm:4king-negative}) \\ \cline{2-4}
    & $k=5$ & $\omega(1/n)$ (Thm.~\ref{thm:5king-positive-Condorcet}) & $\Omega(\log\log n/n)$ (Thm.~\ref{thm:5king-positive-generalized}) \\ \cline{2-4}
    & $6\leq k\leq n-1$ & $\omega(1/n)$ (Thm.~\ref{thm:6king-positive}) & $\omega(1/n)$ (Thm.~\ref{thm:6king-positive}) \\ \hline
    \multicolumn{2}{ |c| }{Single-elimination winners} & $\Omega(\log n/n)$ \cite{KimSuVa17} & $\Omega(\log n/n)$ (Thm.~\ref{thm:single-elimination}) \\ \hline
    \end{tabular}
    \vspace{5mm}
    \caption{Summary of the bounds on the probability $p$ at which the respective tournament solutions select all alternatives with high probability under the corresponding random model. All bounds are asymptotically tight except the bound for $k=5$ with the generalized random model, where there is a gap between $\Omega(\log\log n/n)$ and $\omega(1/n)$. The results with $\Omega(\cdot)$ hold when the associated constant term is sufficiently large.}
    \label{table:summary}
\end{table*}

In this paper, we show that, perhaps surprisingly, all of the changes in the probability threshold occur when $k$ is constant.
In fact, when $k=6$, all alternatives are already likely to be $k$-kings provided that $p\in\omega(1/n)$, the same threshold as $k = n-1$---this significantly strengthens the result of Saile and Suksompong~\cite{SaileSu20} on $(n-1)$-kings.
For $k=3$ and $4$, we establish an asymptotically tight bound of $p\in \Omega(\log n/n)$, while for $k=5$ we leave the only (small) gap between $\Omega(\log\log n/n)$ and $\omega(1/n)$.
Besides the generalized random model, we consider a more specific model which has nevertheless been studied in several papers called the \emph{Condorcet random model} \cite{Frank68,KimSuVa17,LuczakRuGr96,Vassilevskawilliams10}.
In this model, there is a parameter $p$ and a linear order of alternatives from strongest to weakest, and the probability that a stronger alternative dominates a weaker one is $1-p$, independently of other pairs of alternatives.\footnote{The uniform random model corresponds to taking $p=1/2$. For any $p$, the Condorcet random model with parameter $p$ is a special case of the generalized random model with the same $p$---see Figure~\ref{fig:models} for an illustration.
Hence, a positive result for the generalized random model carries over to the Condorcet random model, while a negative result transfers in the opposite direction.}
For the Condorcet random model, we show that the threshold for $k=5$ is $\omega(1/n)$, whereas the thresholds for other values of $k$ remain tight.
Our results are summarized in Table~\ref{table:summary} and presented in \textbf{Section~\ref{sec:generalized-kings}}.
Taken together, they reveal the intriguing facts that (i) $2$-kings are distinctly more selective than $k$-kings for $k\geq 3$; (ii) $3$-kings and $4$-kings are slightly more selective than higher-order kings; and (iii) there is virtually no difference in discriminative power from $k=5$ all the way to $k=n-1$.

In addition to $k$-kings, we also consider the set of \emph{single-elimination winners}, which are alternatives that can win a (balanced) single-elimination tournament under some bracket, where the match outcomes in the single-elimination tournament are determined according to the dominance relation in the original tournament.\footnote{Following prior work on single-elimination tournaments, we assume that the number of alternatives is a power of two.}
Kim et al.~\cite{KimSuVa17} showed that all alternatives are likely to be single-elimination winners in the Condorcet random model as long as $p\in \Omega(\log n/n)$, and this bound is tight.\footnote{If $p\in o(\log n/n)$, the weakest alternative dominates $o(\log n)$ alternatives in expectation; this is insufficient since winning $\log_2 n$ matches is required to win a single-elimination tournament.}
For the generalized random model, they established an analogous statement in the range $p\in \Omega(\sqrt{\log n/n})$.
We close this gap by proving that even for the generalized random model, $p\in\Omega(\log n/n)$ already suffices for all alternatives to be single-elimination winners with high probability; moreover, a winning bracket for each alternative can be computed in polynomial time.
Our result, which can be found in \textbf{Section~\ref{sec:SE-winners}}, further lends credence to the observation that real-world tournaments can be easily manipulated \cite{MatteiWa16}.

In \textbf{Section~\ref{sec:number-kings}}, we move beyond the question of when \emph{all} alternatives are likely to be selected by a tournament solution, and instead ask when this is the case for a large or small number of alternatives.
Even though $p\in\Omega(\sqrt{\log n/n})$ is required in order for all alternatives to be $2$-kings with high probability \cite{SaileSu20}, we show that most of them are already likely to be $2$-kings as long as $p\in\Omega(\log n/n)$.
This threshold is exactly where the transition occurs: if $p\in O(\log n/n)$, we prove that it is almost surely the case that only a small fraction of the alternatives are $2$-kings.
Furthermore, for any $k\ge 3$, we establish that a large fraction of alternatives are likely to be $k$-kings provided that $p\in \omega(1/n)$.
These results illustrate the probability range under which each tournament solution is discriminative, and again exhibit a clear difference between $2$-kings and higher-order kings.
Finally, in \textbf{Section~\ref{sec:experiments}}, we complement our theoretical findings with results from computer experiments.

Before proceeding further, let us provide some discussion and intuition on our models and results.
The uniform random model, while being the most basic choice to start probabilistic investigations of tournaments with, is clearly unrealistic since alternatives in most applications have different levels of strength.
The Condorcet random model addresses this shortcoming by allowing some alternatives to be stronger than others.
However, as Saile and Suksompong~\cite{SaileSu20} pointed out, the Condorcet random model still suffers from limitations such as using the same probability for all pairs of alternatives (regardless of the \emph{extent} to which one alternative is stronger than the other) or not allowing for ``bogey teams'' (i.e., weak teams that often beat certain stronger teams).
The generalized random model only assumes that each match is sufficiently random, and therefore does not have these limitations.
The fact that there are usually a large number of $k$-kings when the tournaments are drawn from the uniform random model, as well as from the Condorcet and generalized random models for sufficiently large $p$, can be explained intuitively by noting that, for large tournaments, there are a high number of potential paths through with one alternative can reach another.
As we will see later in our experimental results (Section~\ref{sec:experiments}), when the tournament size is roughly $100$, the fraction of $k$-kings is already very high for most parameters and models.

\subsection{Related Work}

Tournament solutions have been extensively studied for the past several decades from the axiomatic \cite{BrandtBrSe18,BrandtFi07,HanVa19,LaffondLaLe93,LaffondLaLe94}, computational \cite{AzizBrFi15,BrandtBrSe11,BrillScSu22,Dey17,MnichShYa15}, and probabilistic perspectives \cite{Fey08,ScottFe12}.\footnote{We refer to the surveys by Laslier~\cite{Laslier97} and Brandt et al.~\cite{BrandtBrHa16} for extensive overviews as well as one by Suksompong~\cite{Suksompong21} for recent developments.}
This strong interest is due to the fact that tournaments can be used to model a variety of scenarios including voting, webpage ranking, and biological interactions---indeed, the well-known PageRank algorithm can be viewed as a form of tournament solution \cite{BrandtFi07}, and the pecking order in a flock of chickens can be represented by a tournament \cite{Maurer80}.
In an early work on tournament solutions, Maurer \cite{Maurer80} characterized the pairs $(n,r)$ for which there exists a tournament of size $n$ such that the number of $2$-kings is exactly $r$, and proved that all alternatives are $2$-kings with high probability in the uniform random model.\footnote{Maurer wrote: ``That it is very rare for only one chicken to be king did not surprise me, but I was shocked to learn that the opposite extreme---every chicken is a king---is very common. I would never have guessed such a thing, let alone proved it, had not a student reported to my class the results of a computer program he ran which counted the number of kings in $100$ random flocks of $16$ chickens. No flock had fewer than $8$ kings, and most had $14$, $15$, or $16$!''}
There are several containment relations among common tournament solutions.
For example, the Copeland set, Slater set, Markov set, and Banks set are all contained in the set of $2$-kings (i.e., the uncovered set), which is in turn contained in the set of $(n-1)$-kings (i.e., the top cycle)---this provides a range of options in terms of discriminative power and other properties.
Even though $k$-kings admit a simple and elegant definition generalizing both $2$-kings and $(n-1)$-kings, and have attracted interest from graph theorists \cite{BrcanovPe10,PetrovicTh91,Tan06}, as far as we know, they have not been studied in the social choice context until recently.
Kim and Vassilevska Williams~\cite{KimVa15} and Kim et al.~\cite{KimSuVa17} identified conditions under which a $3$-king can win a single-elimination tournament.
Brill et al.~\cite{BrillScSu22} showed that computing the ``margin of victory'' of $k$-kings can be done efficiently for $k\leq 3$ but becomes NP-hard for $k\geq 4$.
They also illustrated through experiments that the margin of victory of $3$-kings behaves much more similarly to that of $(n-1)$-kings than to the corresponding notion of $2$-kings; our results therefore complement theirs by exhibiting that analogous behavior can be observed with respect to discriminative power.

As we mentioned earlier, the study of tournament solutions under the probabilistic lens was initiated by Moon and Moser~\cite{MoonMo62}, who showed that all alternatives are likely to be $(n-1)$-kings when the tournament is generated by the uniform random model.
Fey~\cite{Fey08} and Scott and Fey~\cite{ScottFe12} established the same property for two other tournament solutions, the Banks set and the minimal covering set, while Fisher and Ryan~\cite{FisherRy95} showed that the bipartisan set includes half of the alternatives on average.\footnote{The \emph{Banks set} is the set of alternatives that appear as the maximal element of some transitive subtournament that cannot be extended.
The \emph{bipartisan set} is the set of alternatives chosen with positive probability in the (unique) Nash equilibrium of the symmetric zero-sum game induced by the tournament.
Given a tournament and two alternatives $x$ and $y$, $x$ is said to \emph{cover} $y$ if (i) $x$ dominates $y$, and (ii) $x$ dominates every alternative $z$ that $y$ dominates.
The \emph{minimal covering set} is the (unique) smallest set of alternatives $B$ such that every alternative $x\not\in B$ is covered by some alternative in $B$ in the tournament restricted to $B\cup\{x\}$.
For further details about these tournament solutions, please see the surveys by Laslier~\cite{Laslier97} and Brandt et al.~\cite{BrandtBrHa16}.
} 
Brandt et al.~\cite{BrandtBrSe18} showed that any tournament solution that satisfies an attractive property called \emph{stability}, including the set of $(n-1)$-kings, the minimal covering set, and the
bipartisan set, must choose at least half of the alternatives on average.
The Condorcet random model has been analyzed, among others, by Frank~\cite{Frank68}, \L{}uczak et al.~\cite{LuczakRuGr96}, Vassilevska Williams~\cite{Vassilevskawilliams10}, and Kim et al.~\cite{KimSuVa17}, with the last paper also proposing the generalized random model.

Finally, single-elimination tournaments have constituted a popular topic of study in the past decade; see the survey by Vassilevska Williams~\cite{Vassilevskawilliams16} and a more recent one by Suksompong~\cite{Suksompong21}.
In particular, even though the problem of determining whether an alternative can win a single-elimination tournament is known to be NP-hard \cite{AzizGaMa18}, a wide range of algorithmic and complexity results have been developed by this active line of work \cite{ChatterjeeIbTk16,GuptaRoSa18,GuptaRoSa18-2,GuptaRoSa19,KimSuVa17,KimVa15,KonickiVa19,ManurangsiSu22,RamanujanSz17,StantonVa11,StantonVa11-2,Vassilevskawilliams10,VuAlSh09}.
For instance, Kim and Vassilevska Williams~\cite{KimVa15} gave an algorithm running in time $O(2^n\text{poly}(n))$ that not only decides whether a given alternative can win a single-elimination tournament but also counts the number of brackets for which this alternative is the winner.
Kim et al.~\cite{KimSuVa17} presented several conditions under which an alternative is guaranteed to be a single-elimination winner and its winning bracket is computable in polynomial time.
Ramanujan and Szeider~\cite{RamanujanSz17} and Gupta et al.~\cite{GuptaRoSa18,GuptaRoSa19} studied this problem from the perspective of parameterized complexity.
Issues of bribery and scheduling manipulation have also been extensively studied with respect to single-elimination as well as other tournament formats such as round-robin \cite{BaumeisterHo21,GuptaRoSa18-2,GuptaRoSa19,KimVa15,KonickiVa19,MatteiGoKl15,Suksompong16}.

\section{Preliminaries}
\label{sec:prelim}

A tournament $T$ consists of a set $V=\{x_1,\dots,x_n\}$ of vertices, also called \emph{alternatives}, and a set $E$ of directed edges.
For any two alternatives $x_i,x_j\in V$, there exists either an edge from $x_i$ to $x_j$ or an edge from $x_j$ to $x_i$, but not both.
The edges represent a \emph{dominance relation} between the alternatives: an edge from $x_i$ to $x_j$ means that $x_i$ \emph{dominates} $x_j$, a relation which we denote by $x_i\succ x_j$.
The \emph{outdegree} (resp., \emph{indegree}) of an alternative $x_i$ is the number of alternatives that $x_i$ dominates (resp., that dominate $x_i$).
We extend the dominance relation to sets of alternatives: for $V_1,V_2\subseteq V$, we write $V_1\succ V_2$ to mean that $x\succ x'$ for all $x\in V_1$ and $x'\in V_2$, and $V_1\succ x'$ to mean that $x\succ x'$ for all $x\in V_1$.
A set $V'\subseteq V$ is called a \emph{dominating set} if for every $x\in V\setminus V'$, there exists $x'\in V'$ such that $x'\succ x$.

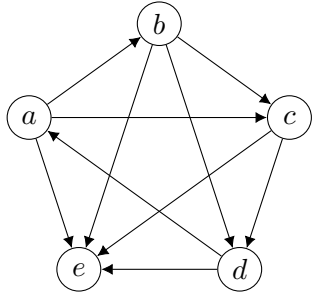
\begin{figure}[!ht]
\centering
\begin{tikzpicture}[scale=1.5]
\tikzstyle{every node}=[fill=white,circle,draw,minimum size=1.5em,inner sep=0pt]
		]
  \node (1) at (162:1.2) {$a$};
  \node (2) at (90: 1.2) {$b$};
  \node (3) at (18: 1.2) {$c$};
  \node (4) at (306:1.2) {$d$};
  \node (5) at (234:1.2) {$e$};
  \draw[->] (1) to (2);
  \draw[->] (1) to (3);
  \draw[->] (4) to (1);
  \draw[->] (1) to (5);
  \draw[->] (2) to (3);
  \draw[->] (2) to (4);
  \draw[->] (2) to (5);
  \draw[->] (3) to (4);
  \draw[->] (3) to (5);
  \draw[->] (4) to (5);
\end{tikzpicture}
\caption{A tournament with five alternatives. 
The outdegrees of $a,b,c,d,e$ are $3,3,2,2,0$, respectively. 
The alternatives $a$, $b$, and $d$ are $2$-kings, $c$ is a $3$-king but not a $2$-king, while $e$ is not a $k$-king for any $k$.
} 
\label{fig:tsol-example}
\end{figure}

We can now define the key notions of this paper.
\begin{itemize}
\item For any integer $k\geq 2$, an alternative is said to be a \emph{$k$-king} if it can reach every other alternative via a directed path of length at most $k$.
An example is shown in Figure~\ref{fig:tsol-example}.
\item Suppose that $n=2^r$ for some nonnegative integer $r$.
An alternative is said to be a \emph{single-elimination winner} if it wins a (balanced) single-elimination tournament under some bracket, where the outcome of each match is determined according to the dominance relation.
Formally, a single-elimination tournament is represented by a balanced binary tree with $n$ leaves corresponding to the alternatives; the assignment of the alternatives to the leaves is called a \emph{bracket}.
The winner of the tournament is determined recursively: the
winner of a leaf is the alternative at the leaf, and the winner of a subtree rooted at node $u$ is the winner of the match between
the winners of the subtrees rooted at the two children of $u$.
An example is shown in Figure~\ref{fig:SE-example}.
\end{itemize}

\begin{figure}[!ht]
\centering
\begin{tikzpicture}[scale=1]
\draw (2,2) -- (2.5,3) -- (3,2);
\draw (4,2) -- (4.5,3) -- (5,2);
\draw (6,2) -- (6.5,3) -- (7,2);
\draw (8,2) -- (8.5,3) -- (9,2);
\draw (2.5,3) -- (3.5,4) -- (4.5,3);
\draw (6.5,3) -- (7.5,4) -- (8.5,3);
\draw (3.5,4) -- (5.5,5) -- (7.5,4);
\draw[fill=white] (2,2) circle  [radius=0.25];
\draw[fill=white] (3,2) circle  [radius=0.25];
\draw[fill=white] (4,2) circle  [radius=0.25];
\draw[fill=white] (5,2) circle  [radius=0.25];
\draw[fill=white] (6,2) circle  [radius=0.25];
\draw[fill=white] (7,2) circle  [radius=0.25];
\draw[fill=white] (8,2) circle  [radius=0.25];
\draw[fill=white] (9,2) circle  [radius=0.25];
\node at (2,2) {$a$};
\node at (3,2) {$b$};
\node at (4,2) {$c$};
\node at (5,2) {$d$};
\node at (6,2) {$e$};
\node at (7,2) {$f$};
\node at (8,2) {$g$};
\node at (9,2) {$h$};
\draw[fill=white] (2.5,3) circle  [radius=0.25];
\draw[fill=white] (4.5,3) circle  [radius=0.25];
\draw[fill=white] (6.5,3) circle  [radius=0.25];
\draw[fill=white] (8.5,3) circle  [radius=0.25];
\node at (2.5,3) {$b$};
\node at (4.5,3) {$c$};
\node at (6.5,3) {$f$};
\node at (8.5,3) {$h$};
\draw[fill=white] (3.5,4) circle  [radius=0.25];
\draw[fill=white] (7.5,4) circle  [radius=0.25];
\node at (3.5,4) {$c$};
\node at (7.5,4) {$h$};
\draw[fill=white] (5.5,5) circle  [radius=0.25];
\node at (5.5,5) {$c$};
\end{tikzpicture}
\caption{A single-elimination tournament with eight alternatives, where we assume that $b\succ a$, $c\succ d$, $f\succ e$, $h\succ g$, $c\succ b$, $h\succ f$, and $c\succ h$; all other dominance relations are arbitrary.  Under this bracket, alternative $c$ is the single-elimination winner.}
\label{fig:SE-example}
\end{figure}
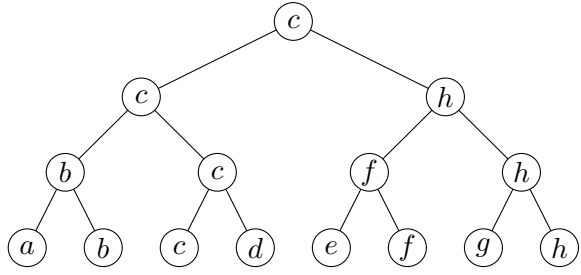

We will consider two random models for generating tournaments.
In the \emph{Condorcet random model}, there is a parameter $0\leq p\leq 1/2$. 
For $i<j$, alternative $x_j$ dominates $x_i$ with probability $p$ (so $x_i$ dominates $x_j$ with probability $1-p$), independently of other pairs of alternatives.
In the \emph{generalized random model}, there is a parameter $p_{i,j}$ for each pair $i\neq j$, where $p_{i,j} + p_{j,i} = 1$.
For any pair $i,j$, alternative $x_i$ dominates $x_j$ with probability $p_{i,j}$, independently of other pairs.
We will generally allow each probability $p_{i,j}$ to be chosen from the range $[p,1-p]$ for a given parameter $0\le p\le 1/2$.
See Figure~\ref{fig:models} for an illustration.
Before a tournament is generated from the generalized random model, the \emph{expected outdegree} of $x_i$ is defined as $\sum_{j\ne i}p_{i,j}$.
Following standard terminology in probability theory, we say that an event whose probability depends on $n$ occurs ``with high probability'' if the probability that it occurs approaches $1$ as $n\rightarrow\infty$.

\begin{figure}[!ht]
\centering
\begin{tikzpicture}
\draw[fill=white] (1,4) circle  [radius=0.25]; 
\draw[fill=white] (4,4) circle  [radius=0.25]; 
\draw[fill=white] (1,1) circle  [radius=0.25]; 
\draw[fill=white] (4,1) circle  [radius=0.25]; 
\node at (1,4) {$x_1$};
\node at (4,4) {$x_2$};
\node at (1,1) {$x_3$};
\node at (4,1) {$x_4$};
\draw[->] (1.25,4) -- (3.75,4); 
\draw[->] (1,3.75) -- (1,1.25); 
\draw[->] (1.18,3.82) -- (3.82,1.18); 
\draw[->] (3.82,3.82) -- (1.18,1.18); 
\draw[->] (4,3.75) -- (4,1.25); 
\draw[->] (1.25,1) -- (3.75,1); 
\node at (2.5,4.3) {$0.8$};
\node at (0.65,2.5) {$0.8$};
\node at (2.1,3.4) {$0.8$};
\node at (2.1,1.6) {$0.8$};
\node at (4.35,2.5) {$0.8$};
\node at (2.5,0.7) {$0.8$};

\draw[fill=white] (7,4) circle  [radius=0.25]; 
\draw[fill=white] (10,4) circle  [radius=0.25]; 
\draw[fill=white] (7,1) circle  [radius=0.25]; 
\draw[fill=white] (10,1) circle  [radius=0.25]; 
\node at (7,4) {$x_1$};
\node at (10,4) {$x_2$};
\node at (7,1) {$x_3$};
\node at (10,1) {$x_4$};
\draw[->] (7.25,4) -- (9.75,4); 
\draw[->] (7,3.75) -- (7,1.25); 
\draw[->] (7.18,3.82) -- (9.82,1.18); 
\draw[->] (9.82,3.82) -- (7.18,1.18); 
\draw[->] (10,3.75) -- (10,1.25); 
\draw[->] (7.25,1) -- (9.75,1); 
\node at (8.5,4.3) {$0.6$};
\node at (6.65,2.5) {$0.7$};
\node at (8.1,3.4) {$0.8$};
\node at (8.1,1.6) {$0.2$};
\node at (10.35,2.5) {$0.5$};
\node at (8.5,0.7) {$0.35$};
\end{tikzpicture}
\caption{Examples of probabilities for generating tournaments according to the Condorcet random model (left) and the generalized random model (right) with $p = 0.2$.
For each pair $i < j$, the corresponding number in the figure indicates the probability $p_{i,j}$ with which alternative $x_i$ dominates $x_j$; the reverse domination occurs with probability $1 - p_{i,j}$.
} 
\label{fig:models}
\end{figure}
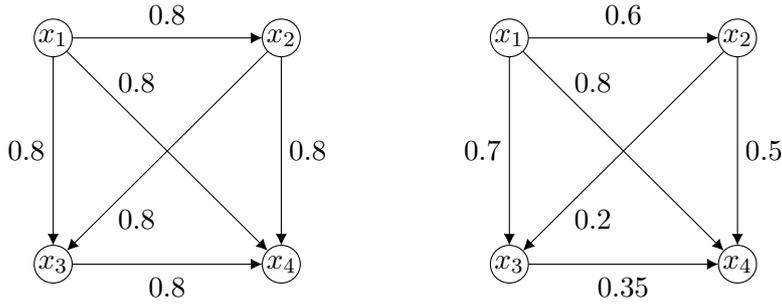

We now list two well-known probabilistic statements that will be used multiple times in this paper.
The first statement provides an upper bound on the probability that a sum of independent random variables is far from its expectation.

\begin{lemma}[Chernoff bound] \label{lem:chernoff}
Let $X_1, \dots, X_k$ be independent random variables taking values in $[0, 1]$, and let $X := X_1 + \cdots + X_k$. Then, for any $\delta \in [0, 1]$,
$$\Pr[X \geq (1 + \delta)\E[X]] \leq \exp\left(\frac{-\delta^2 \E[X]}{3}\right)$$
and
$$\Pr[X \leq (1 - \delta)\E[X]] \leq \exp\left(\frac{-\delta^2 \E[X]}{2}\right).$$
\end{lemma}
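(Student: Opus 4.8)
The plan is to use the standard exponential moment method (Chernoff's technique), treating the two tails separately but symmetrically. For the upper tail, fix $t > 0$ and write $\mu := \E[X]$. Applying Markov's inequality to the nonnegative random variable $e^{tX}$ gives
$$\Pr[X \geq (1+\delta)\mu] = \Pr\left[e^{tX} \geq e^{t(1+\delta)\mu}\right] \leq e^{-t(1+\delta)\mu}\,\E\!\left[e^{tX}\right].$$
By independence, $\E[e^{tX}] = \prod_{i=1}^k \E[e^{tX_i}]$, so the first task is to bound each factor. Here I would exploit that each $X_i$ takes values in $[0,1]$: by convexity of $x \mapsto e^{tx}$ on $[0,1]$, we have $e^{tx} \leq 1 + (e^t - 1)x$ for all $x \in [0,1]$, whence $\E[e^{tX_i}] \leq 1 + (e^t-1)\E[X_i] \leq \exp\!\bigl((e^t-1)\E[X_i]\bigr)$ using $1+y \leq e^y$. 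Multiplying these yields $\E[e^{tX}] \leq \exp\!\bigl((e^t-1)\mu\bigr)$.

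Combining the two displays gives $\Pr[X \geq (1+\delta)\mu] \leq \exp\!\bigl((e^t-1)\mu - t(1+\delta)\mu\bigr)$, and I would optimize over $t$. The minimizer is $t = \ln(1+\delta)$, which produces the classical form
$$\Pr[X \geq (1+\delta)\mu] \leq \left(\frac{e^{\delta}}{(1+\delta)^{1+\delta}}\right)^{\mu}.$$
The lower tail is handled identically with the substitution $t \mapsto -t$: Markov's inequality on $e^{-tX}$ gives $\Pr[X \leq (1-\delta)\mu] \leq e^{t(1-\delta)\mu}\,\E[e^{-tX}]$, the same per-variable bound yields $\E[e^{-tX}] \leq \exp\!\bigl((e^{-t}-1)\mu\bigr)$, and optimizing at $t = -\ln(1-\delta)$ gives $\Pr[X \leq (1-\delta)\mu] \leq \bigl(e^{-\delta}/(1-\delta)^{1-\delta}\bigr)^{\mu}$.

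The remaining step, which I expect to be the main obstacle, is converting these sharp but unwieldy bounds into the clean stated exponentials. Concretely, it suffices to prove the two analytic inequalities
$$\delta - (1+\delta)\ln(1+\delta) \leq -\frac{\delta^2}{3} \quad\text{and}\quad -\delta - (1-\delta)\ln(1-\delta) \leq -\frac{\delta^2}{2}$$
for all $\delta \in [0,1]$, since raising each side to the power $\mu \geq 0$ and exponentiating then delivers the theorem. Both are single-variable calculus facts: I would define $f(\delta)$ to be the difference of the two sides in each case, check $f(0) = 0$, and show $f' \leq 0$ on $[0,1]$, or alternatively substitute the Taylor series $\ln(1+\delta) = \delta - \delta^2/2 + \delta^3/3 - \cdots$ and $\ln(1-\delta) = -\delta - \delta^2/2 - \delta^3/3 - \cdots$ and bound the resulting series termwise. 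The constant $1/2$ in the lower-tail bound is tight to leading order, while the $1/3$ in the upper-tail bound absorbs the higher-order terms and is the reason the two constants differ; verifying monotonicity on the closed interval (and handling the endpoint $\delta = 1$ in the lower tail, where $(1-\delta)\ln(1-\delta) \to 0$) is the only genuinely delicate part of the argument.
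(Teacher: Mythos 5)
The paper states this lemma as a well-known fact and gives no proof of its own, so there is no in-paper argument to compare against. Your proposal is the standard exponential-moment (Bernstein/Chernoff) derivation and is correct: the convexity bound $e^{tx}\le 1+(e^t-1)x$ on $[0,1]$, the optimization at $t=\ln(1+\delta)$ (resp.\ $t=-\ln(1-\delta)$), and the two closing inequalities $\delta-(1+\delta)\ln(1+\delta)\le-\delta^2/3$ and $-\delta-(1-\delta)\ln(1-\delta)\le-\delta^2/2$ all check out on $[0,1]$ by the derivative arguments you sketch (for the upper tail, $f'(\delta)=2\delta/3-\ln(1+\delta)$ dips below zero and stays there since $f'(1)=2/3-\ln 2<0$; for the lower tail, $g'(\delta)=\delta+\ln(1-\delta)\le 0$ directly).
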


The second statement is a simple upper bound on the expression $1+x$.

\begin{lemma}
\label{lem:1+x}
For every real number $x$, we have $1+x \le e^x$.
\end{lemma}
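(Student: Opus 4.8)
The plan is to reduce the inequality to the nonnegativity of a single auxiliary function and then settle that via an elementary first-derivative argument. Specifically, I would define $f(x) := e^x - (1+x)$ and aim to show that $f(x) \ge 0$ for every real $x$, which is exactly the claimed inequality after rearranging.

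First I would record the boundary value $f(0) = e^0 - 1 = 0$. Next I would compute the derivative $f'(x) = e^x - 1$ and observe that, since $e^x$ is strictly increasing and equals $1$ precisely at $x = 0$, we have $f'(x) < 0$ for $x < 0$ and $f'(x) > 0$ for $x > 0$. Consequently $f$ is strictly decreasing on $(-\infty, 0)$ and strictly increasing on $(0, \infty)$, so it attains its global minimum at $x = 0$. Combined with $f(0) = 0$, this yields $f(x) \ge f(0) = 0$ for all $x$, which is the desired conclusion.

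The argument is essentially routine, so I do not expect any substantive obstacle; the only point requiring mild care is to cover all of $\mathbb{R}$ rather than, say, only the range $x \ge 0$, and the sign analysis of $f'$ on both half-lines handles this uniformly. As an alternative I could instead appeal to the convexity of the exponential function: since $1+x$ is precisely the tangent line to $e^x$ at the origin, and any convex differentiable function lies weakly above each of its tangent lines, the inequality would follow at once. However, the direct monotonicity computation above is entirely self-contained and avoids invoking a general convexity fact, so that is the route I would take.
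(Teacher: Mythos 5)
Your argument is correct: the function $f(x) := e^x - (1+x)$ satisfies $f(0)=0$ and $f'(x) = e^x - 1$ changes sign from negative to positive at $x=0$, so $0$ is the global minimum and $f \ge 0$ everywhere, which is exactly the claim. The paper states this lemma as a well-known fact and gives no proof at all, so there is nothing to compare against; your standard first-derivative argument (or equivalently the tangent-line/convexity observation you mention) is a perfectly adequate way to justify it.
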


We end this section with a lemma on the degree of alternatives in a tournament.

\begin{lemma}
\label{lem:degree-sequence}
Let $1\leq r\leq n$.
For any tournament $T$, the average outdegree and the average indegree of the alternatives in any subset of size $r$ are at least $(r-1)/2$.
Similarly, before a tournament is generated from the generalized random model, the average expected outdegree of the alternatives in any subset of size $r$ is at least $(r-1)/2$.
\end{lemma}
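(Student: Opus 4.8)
The plan is to reduce all three claims to a single counting fact: in any tournament on $r$ vertices the edges are partitioned among the vertices according to their outdegrees (and, symmetrically, according to their indegrees), so the sum of outdegrees equals the total number of edges, namely $\binom{r}{2} = r(r-1)/2$. The key observation is that passing from the full tournament to an induced subtournament only discards edges incident to the removed vertices, and discarding such edges can only \emph{decrease} a vertex's degree. Hence degrees measured in the full tournament dominate those measured inside the subset, which is exactly what lets the within-subset count serve as a lower bound.

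Concretely, I would fix a subset $S \subseteq V$ with $|S| = r$. For the outdegree claim, first note that for each $x \in S$ the outdegree $\mathrm{outdeg}_T(x)$ is at least the outdegree of $x$ in the subtournament induced by $S$, since the former additionally counts edges from $x$ to $V \setminus S$. Summing over $x \in S$ and using that the induced subtournament on $S$ has exactly $\binom{r}{2}$ edges gives $\sum_{x\in S}\mathrm{outdeg}_T(x) \ge \binom{r}{2} = r(r-1)/2$; dividing by $r$ yields the claimed bound on the average. The indegree claim follows by the identical argument, since the sum of indegrees in the induced subtournament likewise equals its number of edges $\binom{r}{2}$.

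For the expected-outdegree claim, I would write the sum of expected outdegrees over $S$ as $\sum_{x_i \in S}\sum_{j \ne i} p_{i,j}$ and split each inner sum according to whether $x_j \in S$. The terms with $x_j \notin S$ are nonnegative and can simply be dropped, while the terms with $x_j \in S$ pair up as $\sum_{\{i,j\}\subseteq S}(p_{i,j}+p_{j,i}) = \binom{r}{2}$, using the defining relation $p_{i,j}+p_{j,i}=1$. Dividing by $r$ again finishes the proof. I do not anticipate any real obstacle here: the entire argument is an elementary edge-counting computation, and the only point requiring a moment's care is recognizing that the edges leaving $S$ contribute nonnegatively and therefore never threaten the lower bound.
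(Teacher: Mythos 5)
Your proof is correct and follows essentially the same argument as the paper: both lower-bound the degree sums by counting only the $\binom{r}{2}$ edges internal to the subset (using $p_{i,j}+p_{j,i}=1$ for the expected-outdegree case) and note that edges leaving the subset contribute nonnegatively. Your write-up is merely more explicit about the reduction to the induced subtournament.
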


\begin{proof}
Given a tournament $T$, in a subset of alternatives of size $r$, there are a total of $r(r-1)/2$ edges.
Hence, the sum of the outdegrees of the $r$ alternatives is at least $r(r-1)/2$, implying that their average is at least $(r-1)/2$.
Likewise, the sum of the indegrees of the $r$ alternatives is at least $r(r-1)/2$, so their average is also at least $(r-1)/2$.

Now, consider a tournament with probabilities $p_{i,j}$ as in the generalized random model, and a subset $B$ of $r$ alternatives.
Each edge between two alternatives in $B$ adds $1$ to the sum of expected outdegree of these two alternatives.
Since there are $r(r-1)/2$ edges between alternatives in $B$, the sum of expected outdegree of the alternatives in $B$ is at least $r(r-1)/2$, and the average expected outdegree is therefore at least $(r-1)/2$.
\end{proof}

Unless a base is explicitly specified, $\log$ refers to the natural logarithm.

\section{Generalized Kings}
\label{sec:generalized-kings}

Recall the result of Saile and Suksompong~\cite{SaileSu20} that in the generalized random model, all alternatives are $2$-kings with high probability only if $p\in\Omega(\sqrt{\log n/n})$.
For our first result, we show that $3$-kings are not as selective: even when $p\in\Omega(\log n/n)$, it is already likely that none of the alternatives is excluded by this set.

\begin{theorem}
\label{thm:3king-positive}
Assume that a tournament $T$ is generated according to the generalized random model, and that
$
p_{i,j} \in \left[30\log n/n, 1 - 30\log n/n\right]
$
for all $i\ne j$.
Then with high probability, all alternatives in $T$ are $3$-kings.
\end{theorem}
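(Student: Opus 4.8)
The plan is to fix an alternative $x$, show that it is a $3$-king except with probability $o(1/n)$, and then take a union bound over the $n$ choices of $x$. For a fixed $x$, rather than trying to control the entire two-step neighborhood of $x$, I would look for a single out-neighbor of $x$ that is itself very strong and route all length-three paths through it. Write $N^+(v)$ for the out-neighborhood of $v$, and let $H$ be the set of alternatives whose expected outdegree is at least $n/4$. Using Lemma~\ref{lem:degree-sequence} applied to the complement of $H$, one sees that $|H|\ge n/3$: if fewer than $n/3$ alternatives had expected outdegree at least $n/4$, the remaining set of more than $2n/3$ alternatives would have average expected outdegree exceeding $n/4$, contradicting the lemma.

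The first step is to find a strong out-neighbor of $x$. Since each edge from $x$ to a vertex of $H$ is oriented away from $x$ independently with probability at least $p:=30\log n/n$, the probability that $x$ beats no vertex of $H$ is at most $(1-p)^{|H|}\le e^{-pn/3}=n^{-10}$ by Lemma~\ref{lem:1+x}, which is comfortably $o(1/n)$. On this event I may fix some $a^*\in N^+(x)\cap H$. Because $\E[\mathrm{outdeg}(a^*)]\ge n/4$, the Chernoff bound (Lemma~\ref{lem:chernoff}) gives $|N^+(a^*)|\ge n/8$ except with probability $e^{-\Omega(n)}$. Every vertex in $B:=N^+(a^*)$ is then reachable from $x$ in exactly two steps via $x\to a^*\to\,\cdot\,$, while $a^*$ and the vertices of $N^+(x)$ are reachable in one step.

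It then remains to reach every remaining alternative $y$ in one further step from $B$. For a fixed such $y$, the probability that no vertex of $B$ dominates $y$ is $\prod_{c\in B}p_{y,c}\le (1-p)^{|B|}\le (1-p)^{n/8}\le n^{-30/8}$, and a union bound over the at most $n$ candidates $y$ gives failure probability at most $n^{-30/8+1}=o(1/n)$. If some $c\in B$ dominates $y$, then $x\to a^*\to c\to y$ is a path of length three, so $x$ reaches $y$. Summing the three failure probabilities and union-bounding over $x$ completes the argument.

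The step that needs the most care—and is really the crux—is the independence bookkeeping in the last paragraph. The naive "grow the two-step ball" approach fails here: a weak $x$ has only $\Theta(\log n)$ out-neighbors, and for an arbitrary set of that size the second neighborhood can be as small as $\Theta((\log n)^2)$, so the bound $(1-p)^{|N^+(x)|}$ is far too weak to cover all $y$. What rescues the argument is that $x$'s few out-neighbors form a \emph{random} sample, so Lemma~\ref{lem:degree-sequence} guarantees one of them, $a^*$, is high-degree; crucially, the event $x\to a^*$ is independent of the edges incident to $a^*$ and of the edges from $B$ to the remaining vertices $y$. I would make this rigorous by revealing the edges in three batches—first those incident to $x$ (fixing $a^*$), then those incident to $a^*$ (fixing $B$), and finally the edges from $B$ to the remaining vertices—and observing that these three edge sets are pairwise disjoint, so each batch is fresh conditioned on the previous ones. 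This is exactly what legitimizes applying the Chernoff bound to $|N^+(a^*)|$ and the product bound $\prod_{c\in B}p_{y,c}$.
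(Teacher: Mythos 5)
Your proposal is correct, including the two points that actually need care: the deterministic lower bound $|H|\ge n/3$ via Lemma~\ref{lem:degree-sequence} applied to the complement of $H$, and the three-batch edge-exposure argument (edges at $x$, then edges at $a^*$, then edges between $B$ and the rest are pairwise disjoint, so each stage is independent of the conditioning). The constants also work out: the three failure probabilities $n^{-10}$, $e^{-\Omega(n)}$, and $n\cdot n^{-30/8}$ sum to $o(1/n)$ per alternative. However, your route is genuinely different from the paper's. The paper fixes an ordered pair $(x_j,x_i)$, lets $S_1$ be the dominators of the target and $S_2$ the out-neighbors of the source, observes that a length-$\le 3$ path exists unless $S_1\cap S_2=\emptyset$ and $S_1\succ S_2$, and bounds the latter event by a symmetrization argument (Lemma~\ref{lem:helper-subsets-dominates}): conditioned on $S_1\cup S_2=S$, the set $S_1$ is uniform over subsets of $S$, and $S_1\succ S_2$ forces $S_1$ to be one of only $|S|+1$ outdegree-prefixes of $S$, an event of probability at most $(|S|+1)/2^{|S|}$. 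Your argument is instead a ``hub'' construction--route every length-$3$ path through a single high-expected-outdegree out-neighbor $a^*$ of $x$--which is essentially the template the paper itself uses later for Theorem~\ref{thm:6king-positive}. The trade-off: your approach is more elementary and self-contained, but it relies on bounds of the form $(1-p)^{\Omega(n)}$ surviving a union bound over $\Theta(n^2)$ events, which ties it to the $p\in\Omega(\log n/n)$ regime; the paper's Lemma~\ref{lem:helper-subsets-dominates} works for any $\lambda\ge 1$ and is reused verbatim with $\lambda=\Theta(\log\log n)$ to prove the $5$-king bound (Theorem~\ref{thm:5king-positive-generalized}), which your construction would not give.
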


To prove this theorem, we establish a rather general lemma on the probability of one set dominating another, which will also be useful in our analysis of $5$-kings later.

\begin{lemma} \label{lem:helper-subsets-dominates}
Let $T_0$ be a tournament with $n_0 \ge 10$ alternatives $V_0 := \{x_1, \dots, x_{n_0}\}$, and let 
$
q_{1,1}, q_{1,2}, \dots, q_{n_0,1}, q_{n_0,2} \in \left[\frac{10 \lambda}{n_0},1\right]
$
for some $1 \le \lambda \le \frac{n_0}{10}$.
Suppose that we randomly create a set $S_1$ by including each alternative $x_i$ independently with probability $q_{i,1}$, and a set $S_2$ by including each alternative $x_i$ independently with probability $q_{i,2}$. 
Then, 
$\Pr[S_1 \cap S_2 = \emptyset \text{ and } S_1 \succ S_2] \leq e^{-\lambda}.$
\end{lemma}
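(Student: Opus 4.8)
The plan is to treat the tournament $T_0$ as fixed and to \emph{reveal $S_2$ first}, collapsing the event into a statement about $S_1$ alone. Condition on $S_2 = B$. Since $S_1$ and $S_2$ are generated independently, the event $\{S_1 \cap S_2 = \emptyset \text{ and } S_1 \succ S_2\}$ becomes exactly $\{S_1 \subseteq D(B)\}$, where $D(B) := \{v : v \succ b \text{ for all } b \in B\}$ is the set of common dominators of $B$; disjointness is automatic because no $b\in B$ dominates itself, so $B \cap D(B) = \emptyset$. A vertex fails to lie in $D(B)$ precisely when it belongs to $B$ or is dominated by some vertex of $B$, so $V_0 \setminus D(B) = B \cup N^+(B)$, where $N^+(B)$ denotes the set of vertices dominated by some vertex of $B$. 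Using independence together with Lemma~\ref{lem:1+x} (in the form $1-q \le e^{-q}$) and the hypothesis $q_{v,1} \ge 10\lambda/n_0$, this gives $\Pr[S_1 \subseteq D(B)] = \prod_{v \in B \cup N^+(B)}(1 - q_{v,1}) \le \exp\!\left(-\tfrac{10\lambda}{n_0}\,|B \cup N^+(B)|\right)$.

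The difficulty is that this bound is worthless when $B$ reaches only a few vertices in one step --- that is, when $B$ is a \emph{sink-like} cluster dominated by almost all other vertices --- since then $|B \cup N^+(B)|$ is tiny and the conditional probability is close to $1$. The key observation is that, although such $B$ give a poor bound, the random set $S_2$ is itself very unlikely to be sink-like. I would therefore split on the event $E := \{|S_2 \cup N^+(S_2)| < n_0/3\}$. On $E^c$, every revealed $B$ satisfies $|B \cup N^+(B)| \ge n_0/3$, so the displayed estimate yields a conditional probability at most $e^{-10\lambda/3}$; summing $\Pr[S_2 = B]$ over all such $B$ contributes at most $e^{-10\lambda/3}$ to the target probability.

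It remains to bound $\Pr[E]$, which is where the tournament structure enters through Lemma~\ref{lem:degree-sequence}. If $|S_2 \cup N^+(S_2)| < n_0/3$, then each $v \in S_2$ satisfies $1 + \deg^+(v) = |\{v\}\cup N^+(v)| < n_0/3$, so $S_2$ is contained in the set $L$ of vertices of out-degree less than $n_0/3$. Applying Lemma~\ref{lem:degree-sequence} to $L$ shows its average out-degree is at least $(|L|-1)/2$, which forces $(|L|-1)/2 < n_0/3$, i.e. $|L| < \tfrac{2}{3}n_0 + 1$ and hence $n_0 - |L| > n_0/3 - 1$. Bounding $\Pr[E] \le \Pr[S_2 \subseteq L] = \prod_{v \notin L}(1-q_{v,2}) \le \exp\!\left(-\tfrac{10\lambda}{n_0}(n_0 - |L|)\right)$ and using $\lambda \le n_0/10$ (so the stray $\tfrac{10\lambda}{n_0}$ from the ``$-1$'' is at most $1$) gives $\Pr[E] \le e \cdot e^{-10\lambda/3}$. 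Combining the two cases yields $\Pr[S_1 \cap S_2 = \emptyset \text{ and } S_1 \succ S_2] \le (1+e)\,e^{-10\lambda/3}$, and since $\lambda \ge 1$ implies $7\lambda/3 \ge \ln(1+e)$, this simplifies to the claimed $e^{-\lambda}$. The main obstacle is exactly this case analysis: single conditioning on $S_2$ fails on sink-like configurations, and the resolution is to show, via the degree lemma, that $S_2$ almost never lands inside the small low-out-degree set $L$.
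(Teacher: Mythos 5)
Your proof is correct, but it follows a genuinely different route from the paper's. The paper first reduces to the case where all $q_{i,j}$ equal $10\lambda/n_0$ (the event is monotone decreasing in the membership indicators), then conditions on the \emph{union} $S_1\cup S_2=S$: given disjointness and equal probabilities, the split of $S$ into $(S_1,S_2)$ is uniform over all $2^{|S|}$ subsets, and only $|S|+1$ of these splits (the prefixes of the outdegree ordering of $T_0$ restricted to $S$) can satisfy $S_1\succ S_2$; a binomial computation then finishes the argument. You instead condition on $S_2=B$, rewrite the event as $S_1\subseteq D(B)$ so that $\Pr[S_1\subseteq D(B)]=\prod_{v\in B\cup N^+(B)}(1-q_{v,1})\le \exp(-\tfrac{10\lambda}{n_0}|B\cup N^+(B)|)$, and handle the only problematic case---$S_2$ having a small one-step out-neighborhood---by showing via Lemma~\ref{lem:degree-sequence} that the low-outdegree set $L$ has size less than $\tfrac{2}{3}n_0+1$, so $\Pr[S_2\subseteq L]\le e\cdot e^{-10\lambda/3}$. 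All the steps check out: $B\cap D(B)=\emptyset$ handles disjointness, the $B=\emptyset$ case is absorbed into the event $E$, and $(1+e)e^{-10\lambda/3}\le e^{-\lambda}$ holds for $\lambda\ge 1$. What your approach buys is that it works directly with heterogeneous probabilities (no symmetrization step, whose justification in the paper is itself slightly delicate) and replaces the binomial sum with a transparent union-of-two-cases argument; the cost is an extra appeal to the degree-sequence lemma, which the paper's proof of this particular lemma does not need. The resulting constants are comparable ($e^{-10\lambda/3}$ versus roughly $e^{-4\lambda}$), and both comfortably clear the stated $e^{-\lambda}$ bound.
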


\begin{proof}
It suffices to prove the lemma when $q_{1,1} = q_{1,2} = \dots = q_{n_0,1} = q_{n_0,2} = \frac{10 \lambda}{n_0}$ since increasing these probabilities does not decrease $\Pr[S_1 \cap S_2 = \emptyset \text{ and } S_1 \succ S_2]$.
Observe that if $S_1\succ S_2$, then every alternative in $S_1$ must have a strictly higher outdegree than every alternative in $S_2$ when we restrict the tournament $T_0$ to $S_1\cup S_2$.

Consider any set $S$ of alternatives; let $z_1, \dots, z_t$ be its elements in nondecreasing order of outdegree in the restriction of $T_0$ to $S$. Notice that, when we condition on $S_1 \cap S_2 = \emptyset$ and $S_1 \cup S_2 = S$, since all of our probabilities $q_{i,j}$ are equal, the set $S_1$ has a uniform distribution over all subsets of $S$. 
Furthermore, $S_1 \not\succ S_2$ unless $S_1 = \{z_1, \dots, z_i\}$ for some $i \in \{0, \dots, t\}$. Hence, we have
\begin{align*}
\Pr[S_1 \succ S_2 \mid S_1 \cup S_2 = S \text{ and } S_1 \cap S_2 = \emptyset] \leq \frac{t + 1}{2^t}.
\end{align*}
This implies that
\begin{align*}
\Pr[S_1 \cap S_2 = \emptyset \text{ and } S_1 \succ S_2 \mid S_1 \cup S_2 = S] \leq \frac{t + 1}{2^t}.
\end{align*}
Using the law of total expectation, we can bound the desired probability as follows:
\begin{align*}
\Pr&[S_1 \cap S_2 = \emptyset \text{ and } S_1 \succ S_2] \\
&= \sum_{S \subseteq V_0} \Pr[S_1 \cap S_2 = \emptyset \text{ and } S_1 \succ S_2 \mid S_1 \cup S_2 = S]  \cdot \Pr[S_1 \cup S_2 = S]  \\
&\leq \sum_{S \subseteq V_0}  \frac{|S| + 1}{2^{|S|}} \cdot \Pr[S_1 \cup S_2 = S].
\end{align*}

Note that each alternative is included in $S_1 \cup S_2$ with probability exactly $\gamma := 1 - (1 - \frac{10\lambda}{n_0})^2$. Hence, we have $\Pr[S_1 \cup S_2 = S] = \gamma^{|S|} (1 - \gamma)^{n_0 - |S|}$. Substituting this in the above inequality, we get
\begin{align*}
\Pr&[S_1 \cap S_2 = \emptyset \text{ and } S_1 \succ S_2] \\
&\leq \sum_{S \subseteq V_0}  \frac{|S| + 1}{2^{|S|}} \cdot \gamma^{|S|} (1 - \gamma)^{n_0 - |S|}\\
&= \sum_{t=0}^{n_0} \binom{n_0}{t} \cdot (t + 1) \cdot (0.5\gamma)^t (1 - \gamma)^{n_0 - t} \\
&= \sum_{t=0}^{n_0} \binom{n_0}{t} \cdot t \cdot (0.5\gamma)^t (1 - \gamma)^{n_0 - t}  + \sum_{t=0}^{n_0} \binom{n_0}{t} \cdot (0.5\gamma)^t (1 - \gamma)^{n_0 - t} \\
&= n_0 \sum_{t=1}^{n_0} \binom{n_0 - 1}{t - 1} \cdot (0.5\gamma)^t (1 - \gamma)^{n_0 - t}  + \sum_{t=0}^{n_0} \binom{n_0}{t} \cdot (0.5\gamma)^t (1 - \gamma)^{n_0 - t} \\
&= n_0 (0.5\gamma) (1 - 0.5\gamma)^{n_0 - 1} + (1 - 0.5\gamma)^{n_0},
\end{align*}
where we use the binomial expansion of $(0.5\gamma+(1-\gamma))^{n_0-1}$ and $(0.5\gamma+(1-\gamma))^{n_0}$ for the last equality.

Finally, one can check that $\gamma \in [10\lambda / n_0, 20\lambda/n_0]$.
Therefore, we have
\begin{align*}
\Pr[S_1 \cap S_2 = \emptyset \text{ and } S_1 \succ S_2] 
&\leq n_0 (0.5\gamma) \left(1 - \frac{5 \lambda}{n_0}\right)^{n_0 - 1} + \left(1 - \frac{5 \lambda}{n_0}\right)^{n_0} \\
&\leq n_0 \left(\frac{10\lambda}{n_0}\right) \left(1 - \frac{5 \lambda}{n_0}\right)^{n_0 - 1} + \left(1 - \frac{5 \lambda}{n_0}\right)^{n_0} \\
&\leq 10 \lambda \cdot e^{-4\lambda} + e^{-5\lambda} \\
&\leq e^{-\lambda},
\end{align*}
where in the third inequality we use Lemma~\ref{lem:1+x} and the fact that $n_0 \geq 10$, which follows from our assumption that $1\le \lambda\le n_0/10$, and in the last inequality we use the facts that $\lambda \geq 1$, the function $f(y) := 10y\cdot e^{-3y} + e^{-4y}$ is decreasing for $y\in[1,\infty)$, and $f(1) < 1$.
\end{proof}

Lemma~\ref{lem:helper-subsets-dominates} allows for a short proof of Theorem~\ref{thm:3king-positive}.

\begin{proof}[Proof of Theorem~\ref{thm:3king-positive}]
Fix a pair of distinct alternatives $x_i,x_j$.
We first bound the probability that $x_j$ cannot reach $x_i$ via a directed path of length at most three.

Consider the tournament $T^0$ defined by restricting $T$ to $V^0 := V \setminus \{x_i, x_j\}$. 
Let $S_1$ denote the set of alternatives in $V^0$ that dominate $x_i$ with respect to $T$, and let $S_2$ denote the set of alternatives in $V^0$ that are dominated by $x_j$ with respect to $T$. 
Notice that if $S_1 \cap S_2 \ne \emptyset$ or $S_1 \not\succ S_2$, then there is a path of length at most three from $x_j$ to $x_i$. 
Furthermore, from the assumption of the theorem, each alternative belongs to each of $S_1$ and $S_2$ independently with probability at least $\frac{30 \log n}{n}$, which is at least $\frac{25 \log n}{|V^0|}$ for any $n\ge 12$. 
As a result, we may apply Lemma~\ref{lem:helper-subsets-dominates} with $\lambda = 2.5 \log n$ (note that the size of $T^0$ is $n-2\ge 10$), which gives
\begin{align*}
\Pr[\text{there is } &\text{no path of length at most three from } x_j \text{ to } x_i] \\
&\leq \Pr[S_1 \cap S_2 = \emptyset \text{ and } S_1 \succ S_2] \\
&\leq e^{-\lambda} \\
&= 1 / n^{2.5}.
\end{align*}

Finally, applying the union bound over all (ordered) pairs of alternatives $x_i\ne x_j$, the probability that some alternative cannot reach some other alternative via a directed path of length at most three is no more than $1/n^{0.5}$, which converges to $0$ as $n$ goes to infinity.
\end{proof}

Next, we show that in the Condorcet random model, if $p\in\Theta(\log n/n)$ and the associated constant is low enough, there is likely to be an alternative that is not a $4$-king.
Combined with Theorem~\ref{thm:3king-positive}, this implies that the bound $\Theta(\log n/n)$ is asymptotically tight for both $3$- and $4$-kings in both random models that we consider.

\begin{theorem}
\label{thm:4king-negative}
Assume that a tournament $T$ is generated according to the Condorcet random model, and that 
$
p \le 0.1\log n/n.
$
Then with high probability, there exists an alternative in $T$ that is not a $4$-king.
\end{theorem}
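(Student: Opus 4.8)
The plan is to produce, with high probability, an ordered pair of alternatives $(x_j,x_i)$ for which there is no directed path of length at most four from $x_j$ to $x_i$; any such $x_j$ is then not a $4$-king. First I would record the combinatorial characterization of this failure, paralleling the one implicit in Theorem~\ref{thm:3king-positive}. Writing $S_1$ for the set of alternatives in $V\setminus\{x_i,x_j\}$ that dominate $x_i$, and $S_2$ for those dominated by $x_j$, one checks that there is no path of length at most four from $x_j$ to $x_i$ exactly when (a) $x_j\not\succ x_i$; (b) $S_1\cap S_2=\emptyset$; (c) $S_1\succ S_2$, which rules out all paths of length two and three; and (d) there is no alternative $w$ with $w'\succ w$ for some $w'\in S_2$ and $w\succ w''$ for some $w''\in S_1$, which rules out the remaining length-four paths. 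Conditions (a)--(c) are exactly the obstruction from the $3$-king argument; condition (d) is the new ingredient that the length-four case forces.

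Guided by the structure of the Condorcet model, I would take $x_i$ to be a \emph{strong} alternative (small index) and $x_j$ a \emph{weak} alternative (large index). Then $S_1$, the set of dominators of $x_i$, consists almost entirely of still-stronger alternatives, while $S_2$, the set beaten by $x_j$, consists almost entirely of still-weaker ones, so condition (c) is inherited from the near-transitivity of the underlying order, and (a) holds because $x_j\succ x_i$ would be an upset. The decisive requirement is that $x_i$ be \emph{isolated from below}, i.e.\ that it have essentially no upsetters, so that no weak alternative sneaks into $S_1$ and creates a short path, and that the weak region below $x_j$ be unable to climb into $S_1$ in two steps, which is condition (d). This is precisely where the hypothesis enters: since the expected number of upsetters of a fixed strong alternative is at most $np\le 0.1\log n$, the probability that a given strong alternative has no upsetter is roughly $e^{-np}=n^{-0.1}$, only \emph{polynomially} small rather than exponentially small.

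To upgrade a polynomially small per-pair failure probability into a high-probability existence statement, I would run a second-moment (or Poisson-approximation) argument over a large family of candidate pairs, taking the targets $x_i$ from the strong end of the order and the sources $x_j$ from the weak end, chosen far enough apart that the relevant neighborhoods are cleanly separated. The crucial structural fact keeping the variance under control is that the ``no upsetter'' events of distinct strong alternatives depend on disjoint sets of edges, hence are independent; one then argues that the expected number of failing pairs diverges and concentrates, so that at least one failing pair exists with high probability.

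The hard part will be two-fold. First, Lemma~\ref{lem:helper-subsets-dominates} only gives an \emph{upper} bound on the probability of the core event ``$S_1\cap S_2=\emptyset$ and $S_1\succ S_2$,'' whereas here I need a matching \emph{lower} bound on the full failure event, and in particular must also secure the extra length-four condition (d), which has no counterpart in the $3$-king analysis and further suppresses the event. Second, although the isolation events are independent across targets, conditions (c) and (d) entangle the randomness of $S_1$ and $S_2$ with the edges inside the weak region, so the dependency bookkeeping for the second moment is delicate; the key device will be to isolate a clean sub-event -- no upsetters of $x_i$, together with an explicit order-separation placing $S_1$ entirely above $S_2$ -- whose probability is simultaneously easy to bound from below and nearly independent across the chosen candidate pairs.
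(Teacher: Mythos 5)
Your skeleton is essentially the paper's: pick the target $x_{i^*}$ from a polynomially large set of strong alternatives, the source $x_{j^*}$ from a symmetric set of weak ones, exploit the fact that ``no upsetter among the middle alternatives'' has probability about $e^{-pn}\ge n^{-0.2}$ and is independent across candidates, and then kill the length-$3$ and length-$4$ paths by an order-separation argument. Where you diverge is in the proposed machinery for the amplification, and here the paper's route is strictly simpler than what you anticipate. The paper fixes one three-block partition $V_{\text{top}},V_{\text{mid}},V_{\text{bottom}}$ with $|V_{\text{top}}|=|V_{\text{bottom}}|=\lceil n^{0.45}\rceil$ and works with four \emph{global} events: $E_1$ (some top alternative beats all of $V_{\text{mid}}$), $E_2$ (some bottom alternative loses to all of $V_{\text{mid}}$), $E_3$ ($V_{\text{top}}\succ V_{\text{bottom}}$), and $E_4$ (no middle alternative simultaneously beats someone in $V_{\text{top}}$ and loses to someone in $V_{\text{bottom}}$). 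Because $E_1$ is a disjunction of events on pairwise disjoint edge sets, $\Pr[\neg E_1]$ is a literal product $\bigl(1-(1-p)^{n-2r}\bigr)^r\le e^{-n^{0.1}}$ --- no second moment, no Poisson approximation, no variance bookkeeping. Your conditions (c) and (d) are not folded into the per-pair failure event at all; they become $E_3$ and $E_4$, which are bounded by first-moment union bounds ($r^2p$ and $n(pr)^2$ respectively, both $o(1)$) \emph{independently of which} $x_{i^*},x_{j^*}$ happen to realize $E_1,E_2$. This is exactly the ``clean sub-event, nearly independent across candidates'' device you gesture at in your last sentence, so the two ``hard parts'' you flag --- the need for a lower bound on the full failure event and the entanglement of $S_1,S_2$ with the edges in the weak region --- dissolve once the decomposition is set up this way. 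As written, though, your proposal stops at the plan stage: the second-moment argument is never carried out, and if you did pursue it literally (lower-bounding the probability of the full four-condition failure event per pair and controlling covariances), you would be doing substantially more work than the theorem requires.
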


\begin{proof}
Let $r = \left\lceil n^{0.45} \right\rceil$.
We assume that $n \ge 7$; in this range, it holds that $n > 2r$.
Let us partition the set $V = \{x_1, \dots, x_n\}$ into three parts: $V_{\text{top}} := \{x_1, \dots, x_r\}$, $V_{\text{mid}} := \{x_{r + 1}, \dots, x_{n - r}\}$, and $V_{\text{bottom}} := \{x_{n - r + 1}, \dots, x_n\}$. 
Furthermore, we define the following four events:
\begin{itemize}
\item $E_1$: There exists $x_{i^*} \in V_{\text{top}}$ such that $x_{i^*} \succ V_{\text{mid}}$.
\item $E_2$: There exists $x_{j^*} \in V_{\text{bottom}}$ such that $V_{\text{mid}} \succ x_{j^*}$.
\item $E_3$: $V_{\text{top}} \succ V_{\text{bottom}}$.
\item $E_4$: For every $x_\ell\in V_{\text{mid}}$, either $V_{\text{top}}\succ x_\ell$ or $x_\ell\succ V_{\text{bottom}}$.
\end{itemize}
Before we proceed, let us note that if all four events occur, then $x_{j^*}$ is not a $4$-king. 
To see that this is the case, assume for the sake of contradiction that $x_{j^*}$ is a $4$-king; in particular, there exists a directed path of length at most four from $x_{j^*}$ to $x_{i^*}$. 
From $E_2$ and $E_3$, the alternatives dominated by $x_{j^*}$ form a subset of $V_{\text{bottom}}$. 
From $E_1$ and $E_3$, the alternatives dominating $x_{i^*}$ form a subset of $V_{\text{top}}$. 
Since there is a path of length at most four from $x_{j^*}$ to $x_{i^*}$, there must exist $x_j \in V_{\text{bottom}}$ and $x_i \in V_{\text{top}}$ such that $x_j$ can reach $x_i$ via a path of length at most two; however, this is impossible by $E_3$ and $E_4$.

Below, we will argue that each of $E_1, E_2, E_3$, and $E_4$ occurs with high probability. 
Once this is the case, the union bound together with the argument in the previous paragraph completes the proof of Theorem~\ref{thm:4king-negative}.

We start by showing that $E_1$ occurs with high probability. We may write the probability of the complement event $\neg E_1$ as
\begin{align*}
\Pr[\neg E_1] 
&= \Pr[\forall x_i \in V_{\text{top}}, \exists x_j \in V_{\text{mid}}, x_j \succ x_i] \\
&= \prod_{i=1}^r \Pr[\exists x_j \in V_{\text{mid}}, x_j \succ x_i] \\
&= \prod_{i=1}^r \left(1 - \Pr[\forall x_j \in V_{\text{mid}}, x_i \succ x_j]\right) \\
&= \prod_{i=1}^r \left(1 - \prod_{j=r+1}^{n - r} \Pr[x_i \succ x_j]\right) \\
&= \prod_{i=1}^r \left(1 - (1 - p)^{n-2r}\right) \\
&= \left(1 - (1 - p)^{n-2r}\right)^r \\
&\leq \left(1 - (1 - p)^n\right)^r \\
&\leq \left(1 - \frac{1}{(1 + 2p)^n}\right)^r \\
&\leq \left(1 - \frac{1}{e^{2pn}}\right)^r \\
&\leq \left(1 - \frac{1}{n^{0.2}}\right)^r \leq e^{-r / n^{0.2}} \leq e^{-n^{0.1}}.
\end{align*}
The second inequality holds since $(1-p)(1+2p)\ge 1$ for all $p\in [0,1/2]$, while the third and fifth inequalities follow from Lemma~\ref{lem:1+x}.
Thus, $E_1$ occurs with high probability. 
A symmetric argument implies that $E_2$ also occurs with high probability.

Next, consider $E_3$. We may use the union bound to bound $\Pr[\neg E_3]$ as follows:
\begin{align*}
\Pr[\neg E_3] 
&= \Pr[\exists x_i \in V_{\text{top}}, \exists x_j \in V_{\text{bottom}}, x_j \succ x_i] \\
&\leq \sum_{i=1}^r \sum_{j=n-r+1}^n \Pr[x_j \succ x_i] \\
&= \sum_{i=1}^r \sum_{j=n-r+1}^n p \\
&= r^2 p \\
&\in O\left(\frac{\log n}{n^{0.1}}\right),
\end{align*}
which converges to zero as $n \to \infty$. As a result, $E_3$ also occurs with high probability.

Finally, for $E_4$, the union bound implies that
\begin{align*}
\Pr[\neg E_4] 
&= \Pr[\exists x_\ell\in V_{\text{mid}}, \exists x_i \in V_{\text{top}}, \exists x_j \in V_{\text{bottom}},  x_j\succ x_\ell \text{ and } x_\ell \succ x_i] \\
&\leq \sum_{\ell = r+1}^{n-r}\Pr[\exists x_i \in V_{\text{top}}, \exists x_j \in V_{\text{bottom}}, x_j\succ x_\ell \text{ and } x_\ell \succ x_i].
\end{align*}
For each $x_\ell\in V_{\text{mid}}$, the latter probability is 

\begin{align*}
\Pr&[\exists x_i \in V_{\text{top}}, \exists x_j \in V_{\text{bottom}}, x_j\succ x_\ell \text{ and } x_\ell \succ x_i] \\
&= \Pr[\exists x_i \in V_{\text{top}}, x_\ell \succ x_i]\cdot \Pr[\exists x_j \in V_{\text{bottom}}, x_j\succ x_\ell] \\
&\leq \sum_{i=1}^r \Pr[x_\ell\succ x_i]\cdot \sum_{i=n-r+1}^n \Pr[x_j\succ x_\ell] \\
&= (pr)^2,
\end{align*}
where we use independence for the first equality and the union bound for the inequality.
Hence, 
\[
\Pr[\neg E_4] \leq n(pr)^2\in O(\log^2n/n^{0.1}),
\]
which vanishes for large $n$.
This means that $E_4$ occurs with high probability as well, concluding the proof.
\end{proof}

Our results so far demonstrate that $k$-kings for any $k\geq 3$ are much closer to the $(n-1)$-kings than to $2$-kings in terms of discriminative power.
In the remainder of this section, we show that there is virtually no difference in selectiveness between $k$-kings for $k\geq 6$ and $(n-1)$-kings.
We begin by showing that in the Condorcet random model, all alternatives are likely to be $5$-kings provided that $p\in \omega(1/n)$---this gives a complete characterization of the probability threshold for the Condorcet random model.

\begin{theorem}
\label{thm:5king-positive-Condorcet}
Assume that a tournament $T$ is generated according to the Condorcet random model, and that 
$
p \in \omega(1/n).
$
Then with high probability, all alternatives in $T$ are $5$-kings.
\end{theorem}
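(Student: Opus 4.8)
The plan is to show that, with high probability, every vertex reaches every other vertex along a directed path of length at most five, and to control the total failure probability by exploiting the linear order of the Condorcet model rather than a uniform union bound. I would split every path into an easy \emph{descent} part and a hard \emph{ascent} part: since a stronger alternative dominates each weaker one with probability $1-p \ge 1/2$, reaching weaker alternatives is cheap, whereas reaching the few strongest alternatives is the only real difficulty. Accordingly I would organize the argument around two milestones --- first that every source reaches a large ``down-set'' within a constant number of steps, and then that, once such a down-set is available, the remaining strong alternatives are captured within two more steps --- with the second milestone being \emph{global} (it captures a given target simultaneously from all sources), which is what keeps the accounting under control.

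\emph{First milestone (opening up a down-set).} I would show that from any vertex $x_j$, within at most three steps the reachable set contains all alternatives weaker than some moderate level $\ell$. The point is that even the weakest source has expected out-degree $\sum_{i<j} p \approx pn = \omega(1)$, all of whose out-edges are upsets spread across the order; hence within one or two steps $x_j$ reaches some alternative of small index, and a small-index alternative dominates almost every weaker alternative (by Lemma~\ref{lem:chernoff}, its down-set is essentially complete). The most trapped sources --- those whose early out-neighbours all lie near the bottom --- are handled by using the scattered upsets produced in the second step to reach a still-stronger alternative, and I would combine Lemma~\ref{lem:1+x} with a Chernoff bound to argue that the probability a source fails to open up such a down-set is $e^{-\Theta(pn)}$, dominated by the single weakest source and hence summable to $o(1)$.

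\emph{Second milestone (capturing the top).} Conditioned on a near-universal down-set $R$ being reachable, I would bound the probability that a given target $x_i$ of small index is missed in two further steps by splitting its in-neighbours into the weak ones and the strong ones, which are governed by disjoint (hence independent) sets of edges. Mechanism (a): $x_i$ has $\approx pn$ weak in-neighbours, all lying in $R$, so $x_i$ is reached in one step unless none of these upsets occurs, which has probability $(1-p)^{\,n-i} = e^{-\Theta(pn)}$. Mechanism (b): if (a) fails, then --- modulo the capture of the stronger alternatives, addressed below --- $x_i$ is reached from any stronger alternative dominating it, a dense event failing only with probability $p^{\,i-1}$, which decays geometrically in the index. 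The ``no edge goes up'' obstruction underlying both mechanisms is exactly the event $S_1\cap S_2=\emptyset$ and $S_1\succ S_2$ of Lemma~\ref{lem:helper-subsets-dominates}, taken with $S_1$ the reachable down-set and $S_2$ the in-neighbourhood of $x_i$, whose density hypotheses hold because the strong in-neighbourhood is dense; this is where that lemma does its work. Since (a) and (b) fail together only when $x_i$ has no in-neighbour at all, the per-target miss probability is heuristically at most $(1-p)^{\,n-i}\,p^{\,i-1}$, and summing over targets gives $\sum_i (1-p)^{\,n-i} p^{\,i-1} = e^{-\Theta(pn)} \to 0$.

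Assembling the two milestones, at most three steps to a down-set plus two steps to capture the target gives a path of length at most five. The main obstacle is precisely the weakness of the hypothesis $p\in\omega(1/n)$: every individual ascent failure is only of order $e^{-\Theta(pn)} = e^{-\omega(1)}$, which is $o(1)$ but nowhere near $o(1/n^2)$, so a uniform union bound over the $\Theta(n^2)$ pairs is hopeless. The whole difficulty is to make the bad events concentrate on the few strongest alternatives and decay geometrically in the index (mechanism (b)), so that the total failure telescopes to $o(1)$. The most delicate point is ensuring this geometric ``mop-up'' completes within the five-step budget --- in particular, controlling the length of any chain of strong alternatives that lack weak in-neighbours so that they are captured in the two post-down-set steps rather than needing a sixth --- which is exactly the place where five steps suffice but four do not.
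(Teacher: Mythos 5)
Your architecture (reach a large down-set cheaply, then spend the remaining steps climbing to the few strong targets) is in the right spirit, but the two places your plan flags as ``the main obstacle'' and ``the most delicate point'' are exactly where the proof is missing, and the mechanisms you sketch do not close them. For the first milestone, a per-source failure probability of $e^{-\Theta(pn)}$ union-bounded over $n$ sources gives $n\,e^{-\Theta(pn)}$, which is \emph{not} $o(1)$ when $pn$ grows slowly (e.g.\ $pn=\log\log n$); the phrase ``summable to $o(1)$'' is asserted, not proved, and if the down-set $R$ is source-dependent then your ``global'' second milestone no longer decouples from the source. For the second milestone, the claim that mechanisms (a) and (b) fail together only when $x_i$ has in-degree zero is wrong: mechanism (b) routes through a stronger alternative $x_{i'}$ that must itself already be reachable within the remaining budget, so a chain of strong alternatives whose only in-neighbours are even stronger alternatives can force paths of length greater than five, and you give no bound on such chains --- you explicitly defer this. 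Finally, Lemma~\ref{lem:helper-subsets-dominates} cannot be invoked with $S_1$ equal to the reachable down-set: the lemma requires $S_1$ and $S_2$ to be generated by independent per-alternative coin flips, which a $3$-step reachability set is not (in the paper's applications, $S_1$ and $S_2$ are in/out-neighbourhoods of two fixed vertices, which do have this product structure).

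The paper's proof avoids all of this bookkeeping with a different device. It first reduces to $p\in O(\log n/n)$ (the case $p\in\omega(\log n/n)$ being covered by Theorem~\ref{thm:3king-positive}), so that in particular $p\in o(1/\sqrt{n})$. It then takes constant-size hubs $V_{\text{top}}=\{x_1,x_2\}$ and $V_{\text{bottom}}=\{x_{n-1},x_n\}$: each alternative fails to be dominated by one of the top two with probability $p^2$, so the union bound over all $n$ alternatives costs only $np^2=o(1)$ (this is where the reduction to small $p$ is used); symmetrically every alternative dominates one of the bottom two; and only the $4=O(1)$ hub-to-hub pairs need a $3$-step connection, each failing with probability $e^{-\Theta(pn)}=o(1)$. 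Composing $1+3+1$ steps gives the $5$-king property, and the troublesome union bound over $\Theta(n)$ or $\Theta(n^2)$ events that your proposal wrestles with simply never arises. If you want to salvage your approach, the missing ingredient is precisely some analogue of this collapse to constantly many hard events.
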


\begin{proof}
If $p\in\omega(\log n/n)$, the result already follows from Theorem~\ref{thm:3king-positive}, so assume that $p\in O(\log n/n)$.
In particular, $p \in o(1/\sqrt{n})$.
Let $n\ge 5$, $V_{\text{top}} := \{x_1,x_2\}$, and $V_{\text{bottom}} := \{x_{n-1},x_n\}$.
We define the following three events:
\begin{itemize}
\item $E_1$: For every $x_m \not\in V_{\text{top}}$, there exists $x_i\in V_{\text{top}}$ such that $x_i\succ x_m$.
\item $E_2$: For every $x_\ell \not\in V_{\text{bottom}}$, there exists $x_j\in V_{\text{bottom}}$ such that $x_\ell\succ x_j$.
\item $E_3$: For every $x_i\in V_{\text{top}}$ and $x_j\in V_{\text{bottom}}$, there exists a directed path of length at most three from $x_j$ to $x_i$.
\end{itemize}
Before we proceed, let us note that if all three events occur, then all alternatives in $T$ are $5$-kings.
Indeed, consider any pair of distinct alternatives $x_\ell$ and $x_m$.
If $x_\ell\not\in V_{\text{bottom}}$, let $x_j\in V_{\text{bottom}}$ be such that $x_\ell\succ x_j$; the existence of such $j$ is guaranteed by $E_2$.
Else, let $j = \ell$.
Analogously, if $x_m\not\in V_{\text{top}}$, let $x_i\in V_{\text{top}}$ be such that $x_i\succ x_m$; the existence of such $i$ is guaranteed by $E_1$.
Else, let $i=m$.
From $E_3$, $x_j$ can reach $x_i$ via a directed path of length at most three.
This yields a path of length at most five from $x_\ell$ to $x_m$.

By the union bound, it suffices to show that each of $E_1, E_2$, and $E_3$ occurs with high probability. 
For $E_1$, the probability that its complement $\neg E_1$ occurs is
\begin{align*}
\Pr[\neg E_1]
&= \Pr[\exists x_m\not\in V_{\text{top}}, \forall x_i\in V_{\text{top}}, x_m\succ x_i] \\
&\leq \sum_{x_m\not\in V_{\text{top}}} \Pr[\forall x_i\in V_{\text{top}}, x_m\succ x_i] \\
&= \sum_{x_m\not\in V_{\text{top}}} \Pr[x_m\succ x_1]\cdot \Pr[x_m\succ x_2] \\
&= \sum_{x_m\not\in V_{\text{top}}} p^2 \\
&\leq p^2n \\
&\in o(1),
\end{align*}
where the first inequality follows from the union bound and the second equality from independence.
An analogous argument shows that $E_2$ also occurs with high probability.

Finally, consider $E_3$.
Since both $V_{\text{top}}$ and $V_{\text{bottom}}$ are of constant size, by the union bound, it suffices to show that for each fixed pair $x_i\in V_{\text{top}}$ and $x_j\in V_{\text{bottom}}$, a path of length at most three from $x_j$ to $x_i$ exists with high probability.
We show this for $x_1$ and $x_n$; the proofs for other pairs are similar.
Define $V_{\text{top-half}} = \{x_1,\dots,x_{\lfloor n/2\rfloor}\}$ and $V_{\text{bottom-half}} = \{x_{\lceil n/2\rceil + 1},\dots,x_n\}$.
The probability that $V_{\text{top-half}} \succ x_n$ is
\begin{align*}
(1-p)^{\lfloor n/2\rfloor} 
&\leq (1-p)^{n/3} 
\leq e^{-pn/3},
\end{align*}
which converges to $0$ as $n\rightarrow\infty$ since $p\in\omega(1/n)$; here, the second inequality holds by Lemma~\ref{lem:1+x}.
Hence, with high probability, there exists $x_\ell \in V_{\text{top-half}}$ such that $x_n\succ x_\ell$. 
Likewise, with high probability, there exists $x_m\in V_{\text{bottom-half}}$ such that $x_m\succ x_1$.
Since $\ell < m$, the probability that $x_\ell \succ x_m$ is $1-p$, which approaches $1$ for large $n$.
Hence, the union bound again implies that with high probability, $x_n$ can reach $x_1$ via the path $x_n\succ x_\ell\succ x_m\succ x_1$, as desired.
\end{proof}

For the generalized random model, we show that as long as $p\in\omega(1/n)$, all alternatives are likely to be $6$-kings.

\begin{theorem}
\label{thm:6king-positive}
Assume that a tournament $T$ is generated according to the generalized random model, and that 
$
p_{i,j} \in \omega(1/n)
$
for all $i\neq j$.
Then with high probability, all alternatives in $T$ are $6$-kings.
\end{theorem}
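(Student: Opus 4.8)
The plan is to route every ordered pair of alternatives through a single, fixed ``balanced'' hub vertex $z$: I would show that every alternative reaches $z$ by a path of length at most three and that $z$ reaches every alternative by a path of length at most three, so concatenating at $z$ produces a path of length at most six between any two alternatives. Using one fixed hub is essential, since it avoids a union bound over a growing family of candidate intermediate vertices, which is exactly what would reintroduce a $\log n$ factor and obstruct the $\omega(1/n)$ threshold. To produce $z$, I would use Lemma~\ref{lem:degree-sequence}: taking the $k$ vertices of smallest expected outdegree as the subset shows that the $k$-th smallest expected outdegree is at least $(k-1)/2$, so the number of vertices with expected outdegree below $n/8$ is at most $n/4+1$, and symmetrically for expected indegree. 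Hence at least $n/2-2$ vertices have both expected outdegree and expected indegree at least $n/8$, and I fix any one of them as $z$.

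The technical heart is a lemma stating that for any \emph{fixed} set $S$ with $|S|=\Omega(n)$, with high probability every vertex reaches $S$ within two steps. For a fixed source $v\notin S$, the failure event factors over the candidate intermediate vertices as $\prod_{x\in S}(1-p_{v,x})\cdot\prod_{x\notin S}\bigl(1-p_{v,x}(1-\beta_x)\bigr)$, where $\beta_x=\prod_{s\in S}p_{s,x}$ is the probability that $x$ dominates no element of $S$. This is where $p\in\omega(1/n)$ enters: because $S$ is linear-sized, $\beta_x\le(1-p)^{|S|}\le e^{-\Omega(pn)}\to 0$, so $1-\beta_x\ge 1/2$ for large $n$, and with Lemma~\ref{lem:1+x} the failure probability is at most $e^{-d^+_v/2}$, where $d^+_v$ is the expected outdegree of $v$. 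Summing over sources and again invoking the order-statistic bound from Lemma~\ref{lem:degree-sequence} (so that $d^+_{(k)}\ge\max\{(k-1)/2,(n-1)p\}$), the total failure probability is $\sum_v e^{-d^+_v/2}=O\!\bigl(pn\cdot e^{-\Omega(pn)}\bigr)$, which tends to $0$ since $pn\to\infty$. Applying the identical argument to the reverse tournament yields the dual statement: for any fixed $S$ with $|S|=\Omega(n)$, with high probability every vertex is reached from $S$ within two steps, with the analogous sum $\sum_w e^{-d^-_w/2}=o(1)$ in terms of expected indegrees $d^-_w$.

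With the lemma available, I would reveal the edges incident to $z$. Since $z$ has expected indegree at least $n/8$, Lemma~\ref{lem:chernoff} gives $|N^-(z)|\ge n/10$ with high probability, and likewise $|N^+(z)|\ge n/10$. Conditioning on these neighborhoods—which depend only on edges touching $z$ and hence are independent of the edges used in the reachability arguments—I would apply the lemma with $S=N^-(z)$ to conclude that every vertex reaches $N^-(z)$, and therefore $z$ itself via one further edge, within three steps; and I would apply the reverse form with $S=N^+(z)$ to conclude that $z$ reaches every vertex within three steps. Joining the two halves at $z$ gives a path of length at most six from any alternative to any other, so all alternatives are $6$-kings with high probability.

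The main obstacle is the reach-a-linear-set-in-two-steps lemma, and specifically arranging that each source's failure probability decays in that source's \emph{own} expected outdegree rather than uniformly: a bound uniform over sources would cost a factor of $n$ and force $p\in\Omega(\log n/n)$, whereas the gain comes from the abundance of length-two paths into a linear-sized set, which makes failure rare except for the few low-outdegree vertices that Lemma~\ref{lem:degree-sequence} shows are scarce and whose contributions sum to $o(1)$. I expect the delicate points to be the independence bookkeeping in the factorization (disjointness of the relevant edge sets across distinct intermediate vertices, and independence from the edges revealed at $z$) and the verification that $\beta_x\to 0$ uniformly, which is precisely where $p\in\omega(1/n)$ is used and is essentially tight for this step.
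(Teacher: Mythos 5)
Your proposal is correct, and its architecture mirrors the paper's: both route every ordered pair through a single hub with linear degree in both directions, reduce to three-step reachability to and from the hub, and crucially avoid the $\log n$ loss of a uniform union bound by letting each source's failure probability decay in that source's own degree and controlling the few low-degree sources via the order-statistics consequence of Lemma~\ref{lem:degree-sequence}. The differences are in execution. The paper picks the hub $x^*$ by its degrees in the deterministic ``majority'' tournament $T'$ (via Lemma~\ref{lem:middle-vertex}) and then uses a Chernoff bound on the edge reversals to show that its realized out-neighborhood $S$ is linear; you pick $z$ by its \emph{expected} degrees and apply Chernoff directly, which is equally valid. For the central estimate, the paper conditions on the realized tournament restricted to $\bar{S}$, orders its vertices $z_\ell$ by indegree there, and observes that failure forces the event $\widehat{D}_\ell\succ S$, whose probability $(1-p_{\min})^{|\widehat{D}_\ell|\cdot|S|}$ sums geometrically; you instead compute the failure probability exactly as a product over disjoint (hence independent) edge groups, $\prod_{s\in S}(1-p_{v,s})\prod_{x\notin S}\bigl(1-p_{v,x}(1-\beta_x)\bigr)$, and use $\beta_x\le e^{-p_{\min}|S|}\to 0$ to extract the per-source bound $e^{-d_v^+/2}$. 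Your factorization is arguably cleaner in that it requires no conditioning on the realized restricted tournament and isolates exactly where $p\in\omega(1/n)$ enters (namely in forcing $\beta_x\le 1/2$); the paper's version requires slightly less independence bookkeeping. The delicate points you flag do check out: the edge sets $\{(v,x)\}\cup\{(x,s):s\in S\}$ are pairwise disjoint across intermediate vertices $x$ and disjoint from $\{(v,s):s\in S\}$, and a two-step path into $N^-(z)$ can never use $z$ as its intermediate vertex (that would require $z\succ s$ for some $s\in N^-(z)$), so the reachability events are indeed independent of the revealed edges at $z$.
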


To prove Theorem~\ref{thm:6king-positive}, we first establish the following auxiliary lemma.

\begin{lemma}
\label{lem:middle-vertex}
For any tournament $T$, there exists an alternative with both indegree and outdegree larger than $n/4 - 1$.
\end{lemma}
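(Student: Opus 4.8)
The plan is to argue by contradiction, using the averaging bound on degrees from Lemma~\ref{lem:degree-sequence}. Suppose that no alternative has both indegree and outdegree larger than $n/4 - 1$. Then every alternative has outdegree at most $n/4 - 1$ or indegree at most $n/4 - 1$, so that $V = A \cup B$, where $A$ denotes the set of alternatives with outdegree at most $n/4 - 1$ and $B$ denotes the set of alternatives with indegree at most $n/4 - 1$. In particular, $|A| + |B| \ge n$. The strategy is to show that both $|A|$ and $|B|$ are at most $n/2 - 1$, which immediately contradicts this inequality.

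First I would bound $|A|$. The key observation is that when we restrict $T$ to $A$, the outdegree of each alternative $x \in A$ within this restriction can only decrease relative to its outdegree in the full tournament $T$, so it is at most $n/4 - 1$ by the definition of $A$. Hence the average outdegree of the alternatives in $A$, computed in the subtournament induced by $A$, is at most $n/4 - 1$. On the other hand, Lemma~\ref{lem:degree-sequence} (applied with $r = |A|$) guarantees that this same average is at least $(|A| - 1)/2$. Combining the two inequalities gives $(|A| - 1)/2 \le n/4 - 1$, i.e., $|A| \le n/2 - 1$. A completely symmetric argument, using the indegree part of Lemma~\ref{lem:degree-sequence} applied to the subtournament induced by $B$, yields $|B| \le n/2 - 1$.

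Adding the two bounds gives $|A| + |B| \le n - 2 < n$, which contradicts $|A| + |B| \ge n$. This contradiction completes the proof. Note that $A$ and $B$ need not be disjoint for this to work; it suffices that they cover $V$, so no separate casework is required.

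The argument is short, and I do not expect any genuine obstacle. The only step that warrants care is the monotonicity observation that an alternative's degree within an induced subtournament is bounded above by its degree in the full tournament---this is precisely what allows the global degree cap defining $A$ (respectively $B$) to be fed into the averaging inequality of Lemma~\ref{lem:degree-sequence}. Beyond that, the proof is just a matter of keeping the directions of the inequalities straight.
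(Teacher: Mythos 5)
Your proof is correct and rests on the same engine as the paper's, namely the averaging bound of Lemma~\ref{lem:degree-sequence} applied to subsets of size about $n/2$; the paper simply packages the argument directly (ordering the alternatives by outdegree and showing the median alternative works), whereas you take the contrapositive route of bounding the sizes of the low-outdegree and low-indegree sets and noting they cannot cover $V$. Both versions are sound, and your monotonicity observation correctly bridges the global degree cap and the induced-subtournament average.
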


\begin{proof}
Let us order the alternatives of $T$ as $x_{\sigma(1)},\dots,x_{\sigma(n)}$ in nondecreasing order of outdegree, and let $x^* = x_{\sigma(\lfloor n/2\rfloor)}$. By applying Lemma~\ref{lem:degree-sequence} on $x_{\sigma(1)}, \dots, x_{\sigma(\lfloor n/2\rfloor)}$, we can conclude that the outdegree of $x^*$ is at least $(\lfloor n/2\rfloor - 1)/2 > n/4 - 1$. Similarly, applying Lemma~\ref{lem:degree-sequence} on $x_{\sigma(\lfloor n/2\rfloor)}, \dots, x_{\sigma(n)}$ implies that the indegree of $x^*$ is at least $((n - \lfloor n/2\rfloor + 1) - 1) / 2 > n/4 - 1$.
\end{proof}

The high-level idea of the proof of Theorem~\ref{thm:6king-positive} is to identify an alternative $x^*$ as a ``hub'' that can both reach and be reached by every other alternative via a path of length at most three.
The proof below formalizes this idea.

\begin{proof}[Proof of Theorem~\ref{thm:6king-positive}]
Define a tournament $T'$ on alternatives $x_1,\dots,x_n$ so that for each pair $i\neq j$, we have $x_i\succ x_j$ if $p_{i,j}> 1/2$.
Then, our tournament $T$ is generated by reversing the edges of $T'$ so that the edge between $x_i$ and $x_j$ is reversed with probability $q_{i,j} := \min\{p_{i,j},1-p_{i,j}\}\leq 1/2$, independently of other edges.
Note that $q_{i,j}\in \omega(1/n)$.

From Lemma~\ref{lem:middle-vertex}, there exists an alternative $x^*$ whose indegree and outdegree are both larger than $n/4 - 1$, which is at least $0.24n$ for any $n\ge 100$.

We claim that with high probability, in $T$, every alternative has a path of length at most three to $x^*$, and $x^*$ has a path of length at most three to every alternative; this is sufficient to obtain the desired conclusion.
By symmetry, we only need to show the latter claim. 

Let $r=\lceil 0.24n\rceil$, and let $y_1,\dots,y_r$ be alternatives dominated by $x^*$ in $T'$.
For $1\leq i\leq r$, let $Y_i$ be an indicator random variable that indicates whether the edge from $x^*$ to $y_i$ is reversed when generating $T$ from $T'$---in particular, $Y_i$ takes on the value $1$ if the edge is reversed, and $0$ otherwise.
Note that $\E[Y_i] \le 1/2$ for each $i$, and define $Y := \sum_{i=1}^r Y_i$.
Moreover, define $Y_i' := Y_i  + 1/2 - \E[Y_i]$ and $Y' := \sum_{i=1}^r Y_i'$.
We have $\E[Y_i'] = 1/2$ and $\E[Y'] = r/2$.
By Lemma~\ref{lem:chernoff}, it follows that
\begin{align*}
\Pr[Y \ge 7r/12]
&\leq \Pr[Y' \ge 7r/12] \\
&= \Pr\left[Y' \ge \frac{7}{6}\cdot \E[Y'] \right] \\
&\leq \exp\left(-\left(\frac{1}{6}\right)^2\cdot\frac{\E[Y']}{3}\right) = \exp\left(-\frac{r}{216}\right),
\end{align*}
which converges to $0$ for large $n$.
This means that with high probability, we have $Y < 7r/12$; when this happens, the outdegree of $x^*$ in $T$ is at least $5r/12 \geq 0.1n$.
From now on, we condition on the event that $x^*$ has outdegree at least $0.1n$ in $T$. 
Note that this conditioning only involves edges adjacent to $x^*$. 
In particular, for any $i \neq j$ such that $x^*\not\in\{x_i, x_j\}$, we still have $x_i \succ x_j$ independently with probability $p_{i, j}$.

Let $S$ be the set of alternatives dominated by $x^*$ in $T$; by our assumption, $|S| \geq 0.1 n$. 
Since $x^*$ can reach each alternative in $S$ via a single edge, it suffices to show that, with high probability, $x^*$ can reach every alternative in $\bar{S} := \{x_1, \dots, x_n\} \setminus (S \cup \{x^*\})$ via a path of length at most three. 

Consider the tournament $T_{\text{res}}$, which is the restriction of $T$ on the alternatives in $\bar{S}$. 
Let $z_1, \dots, z_{|\bar{S}|}$ be the alternatives in $\bar{S}$ in nondecreasing order of indegree. 
For each $\ell$, let $D_\ell \subseteq \bar{S}$ denote the set of alternatives that dominate $z_\ell$ in $T_{\text{res}}$, and let $\widehat{D}_\ell= D_\ell \cup \{z_\ell\}$. 
From Lemma~\ref{lem:degree-sequence}, we have $|D_\ell|\geq (\ell-1)/2$, and so $|\widehat{D}_\ell| \geq (\ell + 1) / 2$.

Notice that if some $s\in S$ dominates some $d\in \widehat{D}_\ell$, then $x^*$ can reach $z_\ell$ via a path of length at most three, i.e., the path $x^*\succ s\succ z_\ell$ if $d = z_\ell$, or the path $x^*\succ s\succ d\succ z_\ell$ if $d\in D_\ell$.
Hence, if there is no path from $x^*$ to $z_\ell$ of length at most three, we must have $\widehat{D}_\ell \succ S$ in $T$.
This happens with probability
\begin{align*}
\prod_{x_i \in \widehat{D}_\ell} \prod_{x_j \in S} p_{i, j} &\leq (1 - p_{\min})^{|\widehat{D}_\ell| \cdot |S|} \leq (1 - p_{\min})^{0.05 (\ell + 1) n}.
\end{align*}
where $p_{\min} := \min_{i \ne j} p_{i, j} \in \omega(1/n)$.

Hence, by union bound, the probability that there exists an alternative in $\bar{S}$ which cannot be reached from $x^*$ via a path of length at most three is bounded above by
\begin{align*}
\sum_{\ell=1}^{|\bar{S}|} (1 - p_{\min})^{0.05 (\ell + 1) n}
&= \sum_{\ell=1}^{|\bar{S}|} \left((1 - p_{\min})^{0.05n}\right)^{(\ell + 1)} \leq \sum_{\ell=1}^{\infty} \left(e^{-0.05p_{\min} n}\right)^{(\ell + 1)},
\end{align*}
where the inequality holds by Lemma~\ref{lem:1+x}.
Let $\zeta := e^{-0.05p_{\min} n}$. Since $p_{\min} \in \omega(1/n)$, we have $\zeta \in o(1)$. 
This means that the right-hand side expression above is equal to $\frac{\zeta^2}{1 - \zeta} \in o(1)$, which concludes our proof.
\end{proof}

In light of Theorem~\ref{thm:6king-positive}, the only remaining gap in our probability threshold characterization is for $5$-kings in the generalized random model.
We conjecture that the true threshold is $\omega(1/n)$, but our proof of Theorem~\ref{thm:5king-positive-Condorcet} relies on the ordering of the alternatives in the Condorcet random model and cannot be easily extended.
Instead, we present a slightly weaker bound of $\Omega(\log\log n/n)$---this shows that $5$-kings are closer to $6$-kings than to $4$-kings with respect to selective power.
To establish this bound, we need a lemma on generalized dominating sets.

\begin{definition}
Given a positive integer $r$, a set of alternatives $D$ is said to be an \emph{$r$-dominating set} of a tournament $T$ if every alternative $x \notin D$ is dominated by at least $r$ alternatives in $D$. 
\end{definition}

\begin{lemma} \label{lem:gen-domset-size}
For any tournament $T$ and any positive integer $r$, there exists an $r$-dominating set of $T$ of size at most $r \lceil \log_2 n \rceil$.
\end{lemma}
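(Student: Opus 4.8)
The plan is to construct the $r$-dominating set in $r$ disjoint \emph{layers}, each of which is an ordinary dominating set of the part of the tournament not yet placed in an earlier layer. The workhorse is the classical logarithmic bound for ordinary dominating sets, which I would isolate first: every tournament on $m \ge 2$ vertices has a dominating set of size at most $\lceil \log_2 m\rceil$. This follows from Lemma~\ref{lem:degree-sequence}, which hands us a vertex $v$ of outdegree at least $(m-1)/2$; since the outdegree is an integer, $v$ leaves at most $\lfloor (m-1)/2\rfloor$ vertices that are neither equal to nor dominated by it. Adding $v$ to the set and recursing on these leftover vertices, the number $u_i$ of still-undominated vertices after $i$ picks satisfies $u_i \le \lfloor (u_{i-1}-1)/2\rfloor$, hence $u_i + 1 \le (u_{i-1}+1)/2$ and so $u_i \le (m+1)/2^i - 1$. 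Taking $i = \lceil \log_2 m\rceil$ gives $2^i \ge m$, making the right-hand side at most $1/m < 1$ and thus forcing $u_i = 0$; so at most $\lceil \log_2 m\rceil$ vertices are selected.

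With this base fact in hand, I would build the layers greedily. Put $W_0 := V$ and, for $t = 1, \dots, r$, let $D_t$ be a dominating set of the subtournament induced on $W_{t-1}$ of size at most $\lceil \log_2 n\rceil$ (if $|W_{t-1}| = 1$, take $D_t$ to be that single vertex, of size $1 \le \lceil \log_2 n\rceil$; if $W_{t-1} = \emptyset$, take $D_t := \emptyset$), and set $W_t := W_{t-1} \setminus D_t$. Finally let $D := D_1 \cup \cdots \cup D_r$. Because $D_t \subseteq W_{t-1}$ while $W_{t-1}$ is disjoint from $D_1 \cup \cdots \cup D_{t-1}$, the layers are pairwise disjoint, so $|D| = \sum_{t=1}^r |D_t| \le r\lceil \log_2 n\rceil$, which is the claimed size bound.

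To verify the $r$-domination property, fix any $x \notin D$. Since $x$ avoids every layer, for each $t$ we have $x \notin D_1 \cup \cdots \cup D_{t-1}$, hence $x \in W_{t-1}$, and also $x \notin D_t$; as $D_t$ dominates the subtournament on $W_{t-1}$, there is some $d_t \in D_t$ with $d_t \succ x$. Disjointness of the layers makes $d_1, \dots, d_r$ distinct, so $x$ is dominated by at least $r$ alternatives of $D$, exactly as required. If some $W_{t-1}$ is empty then $D = V$, there is no $x \notin D$, and the condition holds vacuously, so this degenerate case causes no trouble.

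The step demanding the most care is the base fact: confirming that the halving recursion closes out within $\lceil \log_2 m\rceil$ rounds rather than one more, which hinges on using the floor $\lfloor (m-1)/2\rfloor$ for the leftover count and on the inequality $2^{\lceil \log_2 m\rceil} \ge m$. Once this is pinned down, the layered construction makes both the size bound and the emergence of a \emph{fresh} dominator in each layer immediate, since any vertex outside $D$ necessarily survives into every $W_{t-1}$ and so is dominated anew at each of the $r$ rounds.
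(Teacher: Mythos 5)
Your proof is correct and follows essentially the same route as the paper: build the $r$-dominating set in $r$ rounds, each round adding a dominating set of the subtournament induced on the vertices not yet placed, so that the layers are disjoint and every vertex outside the union picks up a fresh dominator in each round. The only difference is that you prove the classical $\lceil \log_2 n\rceil$ bound on dominating sets of tournaments from scratch (correctly, via the halving recursion based on Lemma~\ref{lem:degree-sequence}), whereas the paper simply cites it as well known.
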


\begin{proof}
Let us start with $D = \emptyset$ and repeat the following procedure $r$ times: find a minimum dominating set $S$ of the restriction of $T$ on $V \setminus D$, and update $D$ to $D \cup S$. 
It is clear that the final set $D$ is an $r$-dominating set of $T$.
Furthermore, it is well-known~\cite[Fact 2.5]{MegiddoVi88} that any tournament has a dominating set of size at most $\lceil \log_2 n\rceil$, which implies that the final set $D$ is of size at most $r \lceil \log_2 n \rceil$.
\end{proof}

We are now ready to establish our result on $5$-kings.

\begin{theorem}
\label{thm:5king-positive-generalized}
Assume that a tournament $T$ is generated according to the generalized random model, and that 
$
p_{i,j} \in \left[50 (\log\log n)/n, 1 - 50 (\log\log n)/n\right]
$
for all $i\neq j$.
Then with high probability, all alternatives in $T$ are $5$-kings.
\end{theorem}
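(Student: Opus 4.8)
The plan is to avoid the wasteful union bound over all \(\Theta(n^2)\) pairs---which is exactly what forces the \(\Theta(\log n/n)\) threshold for \(3\)-kings in Theorem~\ref{thm:3king-positive}---by funneling every path through two small dominating sets, so that the only genuinely randomized step is a connectivity check over polylogarithmically many pairs. First I would apply Lemma~\ref{lem:gen-domset-size} with \(r=1\) to \(T\) to obtain a dominating set \(D\) with \(|D|\le\lceil\log_2 n\rceil\), and apply the same lemma to the reverse of \(T\) (the tournament with every edge flipped) to obtain a set \(D'\) with \(|D'|\le\lceil\log_2 n\rceil\). By definition \(D\) reaches every alternative by a single edge, and every alternative reaches \(D'\) by a single edge. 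Thus for any ordered pair \((x_j,x_i)\) I can pick \(a\in D'\) with \(x_j\succ a\) (taking \(a=x_j\) if \(x_j\in D'\)) and \(b\in D\) with \(b\succ x_i\) (taking \(b=x_i\) if \(x_i\in D\)); any directed path of length at most three from \(a\) to \(b\) then extends to a path of length at most \(1+3+1=5\) from \(x_j\) to \(x_i\). It therefore suffices to prove that, with high probability, every \(a\in D'\) reaches every \(b\in D\) within three steps.

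For a fixed pair \(a\neq b\), I would reuse the argument of Theorem~\ref{thm:3king-positive} essentially verbatim: restricting \(T\) to \(V_0:=V\setminus\{a,b\}\), let \(S_1\) be the alternatives dominating \(b\) and \(S_2\) those dominated by \(a\); if \(S_1\cap S_2\neq\emptyset\) or \(S_1\not\succ S_2\) then \(a\) reaches \(b\) in at most three steps, so the failure probability is at most \(\Pr[S_1\cap S_2=\emptyset\text{ and }S_1\succ S_2]\). Each alternative lies in \(S_1\) (resp.\ \(S_2\)) independently with probability at least \(50(\log\log n)/n\), which exceeds \(10\lambda/(n-2)\) for \(\lambda:=4\log\log n\) once \(n\ge 10\), so Lemma~\ref{lem:helper-subsets-dominates} bounds this quantity by \(e^{-\lambda}=(\log n)^{-4}\). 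A union bound over the at most \(\lceil\log_2 n\rceil^2\) relevant pairs then yields a total failure probability of \(O((\log n)^{-2})\to 0\). This is precisely where the \(\log\log n\) factor enters: at this probability level Lemma~\ref{lem:helper-subsets-dominates} can only deliver a per-pair bound of \((\log n)^{-\Theta(1)}\), which is hopeless against \(n^2\) pairs but more than adequate against the \(O(\log^2 n)\) pairs produced by the dominating-set reduction.

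The step I expect to be the main obstacle is that \(D\) and \(D'\) are themselves determined by \(T\), so the connectivity events above are not independent of the choice of \(a\) and \(b\). The correlation is moreover in the unfavorable direction: membership of \(a\) in \(D'\) favors \(a\) having large indegree and hence small outdegree, which is exactly when \(a\) is least likely to reach \(b\) quickly. To make the union bound of the previous paragraph legitimate, I would expose the randomness in two stages, reserving a linear-sized block \(M\) of vertices whose internal edges stay hidden while the dominating sets \(D,D'\) are selected from the complementary block, and then run the three-step connectivity test through \(M\) on those fresh edges. The chief difficulty is arranging the split so that \(D\) and \(D'\) still dominate (and are dominated by) all of \(V\)---including the vertices of \(M\)---while the in- and out-neighborhoods into \(M\) that play the roles of \(S_1\) and \(S_2\) remain genuinely random, as Lemma~\ref{lem:helper-subsets-dominates} requires; the monotonicity built into that lemma (raising inclusion probabilities never increases the bad probability) should help in absorbing whatever residual conditioning survives. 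Once this decoupling is secured, the deterministic guarantees on \(D,D'\) combine with the high-probability connectivity to show that every alternative is a \(5\)-king.
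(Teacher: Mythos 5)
Your high-level plan coincides with the paper's: route every pair through two polylogarithmic-size dominating sets so that Lemma~\ref{lem:helper-subsets-dominates} only needs to beat a union bound over $\mathrm{polylog}(n)$ pairs rather than $n^2$, which is exactly what buys the improvement from $\log n/n$ to $\log\log n/n$. The per-pair computation (restrict to $V\setminus\{a,b\}$, apply the lemma with $\lambda=\Theta(\log\log n)$) is also correct. However, the step you yourself flag as the main obstacle---the fact that $D$ and $D'$ are functions of the random tournament $T$, so the neighborhoods $S_1,S_2$ are not fresh randomness---is a genuine gap, and the fix you sketch does not close it. If $D,D'$ are chosen from a block $V\setminus M$ and are required to dominate (resp.\ be dominated by) the vertices of $M$, then the edges between $V\setminus M$ and $M$ must be exposed to certify that property; but those are precisely the edges defining the in-neighborhood of $b$ and out-neighborhood of $a$ inside $M$ that play the roles of $S_1$ and $S_2$ in the three-step connectivity test. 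The monotonicity of Lemma~\ref{lem:helper-subsets-dominates} does not rescue this, because the conditioning can bias the relevant edges in the unfavorable direction (as you note, membership in $D'$ correlates with small outdegree), and the lemma requires genuine independence of the inclusion events, not just lower bounds on marginals.

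The paper resolves this differently, and the resolution is the one substantive idea missing from your proposal: the dominating sets are computed not on $T$ but on the \emph{deterministic} tournament $T'$ in which $x_i\succ x_j$ whenever $p_{i,j}>1/2$. Since $T'$ depends only on the parameters and not on the random draw, $D$ and $D_{\mathrm{inv}}$ are fixed before any edge of $T$ is revealed, and Lemma~\ref{lem:helper-subsets-dominates} applies to the $E_3$ pairs without any conditioning issue. The price is that domination in $T'$ need not survive in $T$; this is absorbed by taking \emph{$r$-dominating} sets with $r=\lceil 1.1\log_2 n\rceil$ via Lemma~\ref{lem:gen-domset-size} (rather than $r=1$ as you propose), so that each outside vertex has at least $r$ dominators in $T'$ and the probability that all $r$ of those edges reverse is at most $2^{-r}\le n^{-1.1}$, which survives a union bound over all $n$ vertices. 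This enlarges $|D|,|D_{\mathrm{inv}}|$ to $O(\log^2 n)$ and the number of $E_3$ pairs to $O(\log^4 n)$, which is why the paper takes $\lambda=4.5\log\log n$; your arithmetic would otherwise be fine, but without the $T'$ device (or a working substitute for your two-stage exposure) the argument is incomplete.
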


\begin{proof}
Define a tournament $T'$ on alternatives $x_1,\dots,x_n$ so that for each pair $i\neq j$, we have $x_i\succ x_j$ if $p_{i,j}> 1/2$.
Then, our tournament $T$ is generated by reversing the edges of $T'$ so that the edge between $x_i$ and $x_j$ is reversed with probability $q_{i,j} := \min\{p_{i,j},1-p_{i,j}\}\leq 1/2$, independently of other edges.
For each alternative $x$, let $I(x)$ denote the set of alternatives that dominate $x$ in $T'$.

Let $r = \lceil 1.1\log_2 n\rceil$, and let $D$ and $D_{\text{inv}}$ be a minimum $r$-dominating set of the tournament $T'$ and its ``inverse'' constructed by reversing all edges in $T'$, respectively. 
From Lemma~\ref{lem:gen-domset-size}, we have $|D|, |D_{\text{inv}}| \leq r \lceil \log_2 n \rceil \in O((\log n)^2)$.

We define the following three events in $T$:
\begin{itemize}
\item $E_1$: For every $x_m \not\in D$, there exists $x_i\in D$ such that $x_i\succ x_m$.
\item $E_2$: For every $x_\ell \not\in D_{\text{inv}}$, there exists $x_j\in D_{\text{inv}}$ such that $x_\ell\succ x_j$.
\item $E_3$: For every $x_i\in D$ and $x_j\in D_{\text{inv}}$, there exists a directed path of length at most three from $x_j$ to $x_i$.
\end{itemize}
Similarly to the proof of Theorem~\ref{thm:5king-positive-Condorcet}, when $E_1$, $E_2$, and $E_3$ all occur, every alternative is a $5$-king. As a result, it suffices to show that each of the three events occurs with high probability.

For $E_1$, we have
\begin{align*}
\Pr[\neg E_1]
&= \Pr[\exists x_m\not\in D, \forall x_i\in D, x_m\succ x_i] \\
&\leq \sum_{x_m\not\in D} \Pr[\forall x_i\in D, x_m\succ x_i] \\
&\leq \sum_{x_m\not\in D} \Pr[\forall x_i\in D \cap I(x_m), x_m\succ x_i] \\
&= \sum_{x_m\not\in D} \prod_{x_i\in D \cap I(x_m)} \Pr[x_m\succ x_i] \\
&= \sum_{x_m\not\in D} \prod_{x_i\in D \cap I(x_m)} q_{i, m} \\
&\leq \sum_{x_m\not\in D} (1/2)^{|D \cap I(x_m)|} \\
&\leq \sum_{x_m\not\in D} (1/2)^r \leq \sum_{x_m\not\in D} 1/n^{1.1} \in o(1),
\end{align*}
where the first inequality follows the union bound and the fourth inequality from the fact that $D$ is an $r$-dominating set in $T'$.
An analogous argument shows that $E_2$ also occurs with high probability.

Finally, consider $E_3$.
Since both $D$ and $D_{\text{inv}}$ are of size $O((\log n)^2)$, by the union bound, it suffices to show that for each fixed pair $x_i\in D$ and $x_j\in D_{\text{inv}}$, a path of length at most three from $x_j$ to $x_i$ exists with probability $1 - o(1 / (\log n)^{4})$.

To prove this, consider the tournament $T^0$ defined by restricting $T$ to $V^0 := V \setminus \left(D \cup D_{\text{inv}}\right)$. 
Let $S_1$ denote the set of alternatives in $V^0$ that dominate $x_i$ with respect to $T$, and let $S_2$ denote the set of alternatives in $V^0$ that are dominated by $x_j$ with respect to $T$. 
Notice that if $S_1 \cap S_2 \ne \emptyset$ or $S_1 \not\succ S_2$, then there is a path of length at most three from $x_j$ to $x_i$. 
Furthermore, from the assumption of the theorem, each alternative belongs to each of $S_1$ and $S_2$ independently with probability at least $\frac{50 \log \log n}{n}$, which is at least $\frac{45\log \log n}{|V^0|}$ for any $n$ such that $|D|,|D_{\text{inv}}|\le n/20$; this holds when $n\ge 4000$.
As a result, we may apply Lemma~\ref{lem:helper-subsets-dominates} with $\lambda = 4.5 \log \log n$, which gives
\begin{align*}
\Pr[\text{there is } &\text{no path of length at most three from } x_j \text{ to } x_i] \\
&\leq \Pr[S_1 \cap S_2 = \emptyset \text{ and } S_1 \succ S_2] \\
&\leq e^{-\lambda} \\
&= 1 / (\log n)^{4.5} \\
&\in o(1/ (\log n)^4), 
\end{align*}
which concludes our proof.
\end{proof}

\section{Single-Elimination Winners}
\label{sec:SE-winners}

In this section, we turn our attention to single-elimination winners and derive a tight bound of $\Omega(\log n/n)$ for the generalized random model, thereby strengthening the bound $\Omega(\sqrt{\log n/n})$ of Kim et al.~\cite{KimSuVa17} and matching their bound for the Condorcet random model.
As in previous work on this subject, we assume for simplicity that $n=2^r$ for some positive integer $r$, so $r = \log_2 n$.
In order to construct a winning bracket, a useful notion is that of a ``superking'', introduced by Vassilevska Williams~\cite{Vassilevskawilliams10}.

\begin{definition}
Given a tournament $T$, an alternative $x$ is said to be a \emph{superking} if for every alternative $x'$ such that $x'\succ x$, there exist at least $\log_2 n$ alternatives $x''$ such that $x\succ x''$ and $x''\succ x'$.
\end{definition}

\begin{lemma}[\cite{Vassilevskawilliams10}]
\label{lem:superking}
In any tournament, every superking is a single-elimination winner, and its winning bracket can be computed in polynomial time.
\end{lemma}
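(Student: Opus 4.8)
The plan is to build a winning bracket for a superking $x$ over the course of the $r := \log_2 n$ rounds of the single-elimination tournament, processing the rounds in order and deciding at each stage which of the currently surviving alternatives advances. Throughout, I would keep $x$ alive and maintain the invariant that, after $i$ rounds have been played, the $2^{r-i}$ surviving alternatives induce a sub-tournament in which every alternative $x'$ that still beats $x$ (a surviving \emph{superior}) is beaten by at least $r-i$ surviving alternatives that $x$ itself beats (surviving \emph{killers} of $x'$). The base case $i=0$ is exactly the superking hypothesis, since every superior of $x$ starts with at least $r$ killers; after $r$ rounds the invariant leaves $x$ with no surviving superior, so $x$ is the champion. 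Thus the whole statement reduces to a single one-round reduction lemma.

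The one-round lemma reads: given a tournament of even size in which every superior of $x$ has at least $c\ge 1$ surviving killers, one can pair up all alternatives into matches so that (i) $x$ beats its opponent and hence advances, (ii) the number of surviving superiors is at most $\lfloor m/2\rfloor$ where $m$ is the current number of superiors, and (iii) every surviving superior retains at least $c-1$ surviving killers. To secure (i) and (ii), I would pair the superiors against one another (halving their number, since the winner of such a match is still a superior), pair $x$ against an inferior it beats, and, when $m$ is odd, pair the leftover superior against one of its killers so that it is eliminated. The decrease of $c$ by one per round in (iii) is matched exactly to the decrease of the required killer count by one per round, and the halving in (ii) together with the bound $m < 2^r$ on the initial number of superiors guarantees that no superior survives after all $r$ rounds; this tight bookkeeping is precisely why the superking definition asks for $\log_2 n$ killers rather than fewer.

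The main obstacle is clause (iii). After the superiors and $x$ are paired off, the remaining inferiors must themselves be paired, and roughly half of them will be eliminated; a careless pairing could destroy many of a surviving superior's killers, \emph{halving} the count rather than merely decrementing it. The delicate part is therefore to choose \emph{which} inferiors advance so that, for each of the $\le \lfloor m/2\rfloor$ surviving superiors, at least $c-1$ of its killers are kept; note that even $x$'s own choice of opponent may delete a killer, so the budget is genuinely tight. I would formalize the required selection as a bipartite matching / Hall-type feasibility problem, using the slack between $c$ and $c-1$ to absorb the single allowed loss and exploiting the freedom to retain the strongest killers. Showing that this selection is always feasible under the invariant is the heart of the argument.

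Finally, for the polynomial-time claim, each round's pairing is produced by solving a bipartite matching problem on at most $n$ vertices, and there are only $r = \log_2 n$ rounds, so the entire winning bracket is computed in polynomial time. Since the result is due to Vassilevska Williams~\cite{Vassilevskawilliams10}, I would defer to that work for the full details.
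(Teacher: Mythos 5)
The paper itself does not prove this lemma (it is imported from Vassilevska Williams~\cite{Vassilevskawilliams10}), so the only question is whether your sketch is a correct reconstruction. It is not: the one-round reduction you propose is false, so the ``heart of the argument'' you defer is not merely unproven but unprovable under your pairing recipe. Concretely, take $n=8$, a superking $K$ with out-neighbours $A=\{a_1,a_2,a_3\}$ and in-neighbours $B=\{b_1,b_2,b_3,b_4\}$, where every $a_i$ beats every $b_j$ (so each superior has exactly $\log_2 8=3$ killers and the superking condition holds with no slack). Your recipe pairs the superiors against one another ($b_1$ vs.\ $b_2$, $b_3$ vs.\ $b_4$), sends $K$ against some $a_i$, and then the two remaining inferiors have no one left to play but each other. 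After this round two superiors survive but only one killer does, so clause~(iii) of your lemma (each surviving superior keeps $c-1=2$ killers) fails, and no Hall-type selection among the inferiors can rescue it because there is literally only one way to pair them. Worse, the resulting four-player position is unwinnable for $K$: it must spend round~2 on its unique remaining out-neighbour while the two superiors play each other, and then it meets a superior in the final. So the gap is not in the bookkeeping you flagged; it is in the decision to pair superiors against one another in the first place.

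The correct construction inverts this: in each round, first take a \emph{maximal} matching $M$ between killers and superiors (an edge $(a,b)$ with $a$ an out-neighbour of $K$, $b$ an in-neighbour, and $a\succ b$), and let those pairs play; only the superiors left unmatched by $M$ are paired among themselves, $K$ plays a leftover inferior (unmatching one edge of $M$ if necessary), and the leftover inferiors fill out the bracket. Maximality is exactly what saves the killer counts: if a superior $b$ is unmatched, every one of its killers must be matched in $M$, and a matched killer wins its match and survives. Hence every superior that can survive the round retains all of its killers except possibly the one consumed as $K$'s opponent, giving the $c\mapsto c-1$ decrement, and the induction ``$K$ remains a superking of the half-sized remaining tournament'' goes through. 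In the example above this pairs $a_2,a_3$ against $b_1,b_2$, preserving both killers, which is precisely the move your recipe forbids. I would rewrite the one-round lemma around the maximal matching rather than around pairing superiors with each other; the polynomial-time claim then follows as you say, since a greedy maximal matching per round suffices.
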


Before we proceed to our result, let us briefly recap the proofs of the two aforementioned results by Kim et al.~\cite{KimSuVa17}, and explain why they cannot be used to establish our desired strengthening.
In order to show that all alternatives are single-elimination winners with high probability when $p\in\Omega(\sqrt{\log n/n})$, Kim et al.~showed that all alternatives are likely to be superkings in this range; it is not difficult to verify that this condition no longer holds when $p\in o(\sqrt{\log n/n})$.\footnote{To see this, consider the Condorcet random model with $p\in o(\sqrt{\log n/n})$, and let $x$ and $x'$ be the weakest and strongest alternative, respectively.
It is likely that $x'\succ x$.
Moreover, for any other alternative $y$, the probability that $x\succ y$ and $y\succ x'$ simultaneously hold is $p^2 \in o(\log n/n)$, so the expected number of alternatives $x''\not\in\{x,x'\}$ such that $x\succ x''$ and $x''\succ x'$ is $o(\log n)$.
This means that $x$ is unlikely to be a superking.
}
For the $\Omega(\log n/n)$ bound in the Condorcet random model, they argued that the weakest alternative $x$ is likely to dominate one of the top $n/6$ alternatives, and constructed a winning bracket for this latter alternative $y$ among the top half of the alternatives, so that $x$ can play $y$ in the final round and win the single-elimination tournament.
Since there is no clear notion of strength in the generalized random model (indeed, all alternatives may be roughly equally strong, with no linear order), this approach also does not work for our purpose.

At a high level, our proof proceeds by choosing $r = \log_2n$ alternatives that our desired winning alternative $x$ dominates.
In order to ensure that $x$ can play against these alternatives in the $r$ rounds, we partition the $r$ alternatives along with the remaining alternatives into subsets of size $1,2,\dots,2^{r-1}$, so that the $r$ alternatives are superkings in the respective subtournament and can therefore win a single-elimination tournament with respect to their subset by Lemma~\ref{lem:superking}.

\begin{theorem}
\label{thm:single-elimination}
Assume that a tournament $T$ is generated according to the generalized random model, and that 
$
p_{i,j} \in \left[100\log n/n, 1 - 100\log n/n\right]
$
for all $i\neq j$.
Then with high probability, all alternatives in $T$ are single-elimination winners, and a winning bracket of each alternative can be computed in polynomial time.
\end{theorem}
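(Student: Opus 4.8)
The plan is to follow the ``caterpillar'' strategy described before the statement. Fix the target alternative $x$ and place it at a leaf of the balanced binary tree, so that its round-$i$ opponent is the winner of a complete subtree on a vertex set $B_i$ with $|B_i|=2^{i-1}$. Since $\sum_{i=1}^{r}2^{i-1}=n-1$, the sets $B_1,\dots,B_r$ partition $V\setminus\{x\}$, and it suffices to produce such a partition together with, for each $i$, a leader $\ell_i\in B_i$ satisfying $x\succ\ell_i$ that is a superking of $T[B_i]$ (whose size is a power of two). By Lemma~\ref{lem:superking} we can then make each $\ell_i$ win its subtree; as $x$ beats every $\ell_i$, it wins overall, and each within-subtree bracket is polynomial-time computable. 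The convenient sufficient condition I would use for ``$\ell_i$ is a superking of $T[B_i]$'' is that the out-neighbors of $\ell_i$ inside $B_i$ form an $(i-1)$-dominating set of $T[B_i]$: then every $z\in B_i$ beating $\ell_i$ has at least $i-1=\log_2|B_i|$ intermediaries of the form $\ell_i\succ w\succ z$.

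Next I would secure the leaders. By Lemma~\ref{lem:degree-sequence} a constant fraction of all alternatives have outdegree at least, say, $0.4n$; revealing $x$'s incident edges afterwards, each of these lies in $N^+(x)$ independently with probability at least $100\log n/n$, so Lemma~\ref{lem:chernoff} shows that $N^+(x)$ contains at least $r=\log_2 n$ such high-outdegree alternatives, with failure probability small enough to survive a union bound over all $n$ choices of $x$. I would then split the subtournaments into two regimes. For the \emph{small} $B_i$ (all but the top $O(1)$, whose sizes are below a constant-fraction threshold) I fill $B_i$ entirely with out-neighbors of $\ell_i$, so $\ell_i$ dominates $B_i$ and is vacuously a superking. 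For each of the \emph{large} $B_i$ (the constantly many top sizes $n/2,n/4,\dots$, each a constant fraction of $n$) I reserve a helper set $P_i\subseteq N^+(\ell_i)$ of size $\Theta((i-1)n/\log n)$ and fill the remaining slots arbitrarily; since each helper beats a fixed filler with probability at least $100\log n/n$, the expected number of helpers beating any given filler is a large multiple of $i-1$, so by Lemma~\ref{lem:chernoff} every filler is $(i-1)$-dominated by $P_i$ with high probability, making $P_i$ the required dominating set.

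To run both regimes at once I must realize all the out-neighbor demands disjointly. The total demand is $\sum_{\text{small}}(2^{i-1}-1)+\sum_{\text{large}}|P_i|$; the large regime contributes only a constant number of terms, and the remaining (small) sizes sum to a small constant fraction of $n$, so by tuning the threshold and the helper-set constant I would keep the total strictly below $0.4n$. Since every leader's out-neighborhood has size at least $0.4n$, Hall's condition holds for every subcollection of leaders, so a bipartite matching (equivalently, a max-flow) yields pairwise-disjoint sets $A_i\subseteq N^+(\ell_i)$ of the prescribed sizes, and the leftover alternatives become the fillers of the large $B_i$. Lemma~\ref{lem:1+x} enters only in the routine tail estimates, and every step---computing outdegrees, the matching, and the superking brackets---is polynomial-time, giving the algorithmic conclusion.

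The step I expect to be the real obstacle is the probabilistic bookkeeping behind the large-subtournament domination: the helper sets $A_i$ and the fillers are themselves functions of the random tournament, so the Chernoff estimate ``every filler is $(i-1)$-dominated by $A_i$'' cannot be invoked naively, and a crude union bound over all candidate leader tuples is far too lossy. The fix I would pursue is a deferred-revelation argument exploiting that the construction depends only on the edges incident to $x$ together with the out-edges of the leaders (these determine $N^+(x)$, the leaders, the matching, the $A_i$, and the fillers), whereas whether a filler is dominated by $A_i$ is decided by edges \emph{emanating from members of $A_i$}, which are non-leaders and hence remain fresh after conditioning. Conditioning on the former, one applies Chernoff to the latter and finishes with a union bound over the constantly many large subtournaments and their at most $n$ fillers. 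Making this independence airtight---in particular arranging the data-dependent selection of leaders and helpers (for instance drawing the fillers and helper vertices from $N^-(x)$, whose edges are untouched) so that precisely the edges governing domination stay fresh, and handling the asymmetric case where $N^+(x)$ is large---is the delicate heart of the argument and is where I would concentrate the effort.
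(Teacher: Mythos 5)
Your proposal is correct and follows essentially the same route as the paper: identify $\Theta(n)$ high-outdegree alternatives, extract $r=\log_2 n$ of them dominated by $x$ via Chernoff, partition $V\setminus\{x\}$ into sets of sizes $1,2,\dots,2^{r-1}$ so that each leader dominates its entire small set and a $\Theta(n)$-sized helper subset of each of the $O(1)$ large sets, verify the superking property by a Chernoff-plus-union bound over the fillers, and invoke Lemma~\ref{lem:superking}. The only deviations are immaterial: the paper allocates the disjoint out-neighbor sets greedily (total demand $<0.4n$ versus outdegree $\ge 0.4n$) rather than via Hall's theorem, and it handles the conditioning issue you flag implicitly by determining the leaders and the partition from edges incident to $x$ and the leaders only, leaving the helper-to-filler edges fresh exactly as in your deferred-revelation fix.
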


\begin{proof}
By union bound, it suffices to show that for each alternative $x$, with probability $1-o(1/n)$, $x$ is a single-elimination winner and its winning bracket can be computed in polynomial time.

Fix an alternative $x$. First, note that by Lemma~\ref{lem:degree-sequence}, any subset of alternatives of size at least $0.94n-1$ has average expected outdegree at least $(0.94n-2)/2$, which is greater than $0.45n + 1$ for $n > 100$ (since $n$ is assumed to be a power of two in this section, we must have $n\ge 128$).
Hence, there exist at least $0.06n$ alternatives besides $x$ with expected outdegree at least $0.45n + 1$.
Call this set of alternatives $H$.
Let $h := |H| \ge 0.06n$, and let $z_1,\dots,z_h$ be the alternatives in~$H$.

For $1\leq i\leq h$, let $Z_i$ be an indicator random variable that indicates whether $x$ dominates $z_i$ with respect to $T$---in particular, $Z_i$ takes on the value $1$ if $x\succ z_i$, and $0$ otherwise.
Note that $\E[Z_i] \geq 80\log n/n$ for each $i$.
Define $Z := \sum_{i=1}^h Z_i$, so $\E[Z] \geq (100\log n/n) \cdot |H| \ge 6 \log n \geq 3.7r$.
By Lemma~\ref{lem:chernoff}, we have
\begin{align*}
\Pr[Z \leq r] 
&\leq \Pr\left[Z \leq \frac{1}{3.7}\cdot\E[Z] \right] \\
&\leq \exp\left(-\left(\frac{2.7}{3.7}\right)^2\cdot\frac{\E[Z]}{2}\right) \\
&\leq \exp\left(-\left(\frac{2.7}{3.7}\right)^2\cdot 3 \log n\right) \\
&\leq n^{-1.5} \\
&\in o(1/n).
\end{align*}
From now on, we assume that $Z > r$, meaning that $x$ dominates at least $r$ alternatives in $H$ with respect to $T$.
Let $y_1,\dots,y_r$ be $r$ such alternatives.
We will show that with probability $1-o(1/n)$, there exists a partition of $V\setminus\{x\}$ into $r$ sets $V_1,\dots,V_r$ such that for each $1\leq i\leq r$, the set $V_i$ has size $2^{i-1}$ and contains $y_i$, and moreover $y_i$ is a single-elimination winner in the tournament $T$ restricted to $V_i$.
This is sufficient because we can then choose a bracket so that $x$ beats $y_1$ in the first round, $y_2$ in the second round, and so on, until beating $y_r$ in the final round to win the single-elimination tournament.

Consider an alternative $y\in\{y_1,\dots,y_r\}$, and observe that by definition of $H$, for $n > 100$, the expected outdegree of $y$ with respect to alternatives other than $x$ is at least $0.45n$.
Applying Lemma~\ref{lem:chernoff} in a similar manner as above, the probability that the outdegree of $y$ in $T$ upon excluding the edge between $y$ and $x$ is less than $0.4n$ is at most 
\[
\exp\left(-\left(\frac{1}{9}\right)^2\cdot\frac{0.45n}{2}\right)
\leq e^{-0.002n} 
\in o(1/n^2).
\]
By union bound, we have that with probability at least $1-o(1/n)$, every alternative $y_i$ has outdegree at least $0.4n$ in $T$.
Assume from now on that this is the case.

We now construct our desired partition according to the following steps:
\begin{enumerate}
\item For each $1\leq i\leq r$, add $y_i$ to $V_i$.
\item For each $1\leq i\leq r - 3$, add $2^{i-1}-1$ remaining alternatives that $y_i$ dominates in $T$ to $V_i$.
\item For each $r-2\leq i\leq r$, add $\lfloor 0.091n \rfloor-1$ remaining alternatives that $y_i$ dominates in $T$ to $V_i$.
\item For each $r-2\leq i\leq r$, add remaining alternatives to $V_i$ so that $|V_i| = 2^{i-1}$.
\end{enumerate}
To see that this construction procedure is well-defined, observe that in the first three steps, we add a total of
\begin{align*}
r + \sum_{i=1}^{r-3}(2^{i-1}-1) + \sum_{i=r-2}^r(\lfloor 0.091n \rfloor-1) 
&= \sum_{i=1}^{r-3} 2^{i-1} + \sum_{i=r-2}^r \lfloor 0.091n \rfloor \\
&\leq (2^{r-3}-1) + 3\cdot 0.091n \\
&= 0.398n - 1
\end{align*}
alternatives.
Including $x$, the total number of unavailable alternatives at any point during these three steps is therefore at most $0.398n < 0.4n$. Since every $y_i$ has outdegree at least $0.4n$ in $T$, there always exist available alternatives.

It remains to show that each $y_i$ is a superking in $V_i$ with sufficiently high probability.
For $1\leq i\leq r-3$, this holds with probability $1$ because $y_i$ dominates all other alternatives in $V_i$.
For $r-2\leq i\leq r$, by construction, for $n > 100$, $y_i$ dominates at least $0.091n - 2 \geq 0.071n$ alternatives in $V_i$ with respect to $T$; call this set of alternatives $W$.
Let $u$ be any alternative in $V_i$ that dominates $y_i$.
The expected number of alternatives in $W$ that dominate $u$ is at least $(100\log n/n)\cdot (0.071n) = 7.1\log n > 4.9r$.
Applying Lemma~\ref{lem:chernoff} again, the probability that the number of alternatives in $W$ dominating $u$ is less than $r$ is bounded above by
\[
\exp\left(-\left(\frac{3.9}{4.9}\right)^2\cdot \frac{7.1\log n}{2}\right)
\leq n^{-2.2}.
\]
Taking the union bound over all $u$, we find that $y_i$ is a superking with probability $1-o(1/n)$.
A union bound over all $r-2\leq i \leq r$ implies that with this probability, $y_i$ is a superking in $V_i$ for all $i$, and therefore $x$ is a single-elimination winner.

Finally, from Lemma~\ref{lem:superking}, a winning bracket for $y_i$ in the subtournament with alternatives $V_i$ can be found in polynomial time.
Moreover, our partitioning procedure can be implemented efficiently.
Hence, we can compute a winning bracket for $x$ in polynomial time.
\end{proof}

\section{Number of Kings}
\label{sec:number-kings}

So far, we have addressed the question of when each tournament solution is likely to select \emph{all} alternatives, i.e., the case where the solution is decidedly not useful.
In this section, we move beyond this question (which is also the focus of several previous works) and ask when a small or large number of alternatives are chosen.
Our first result shows that even though $p\in \Omega(\sqrt{\log n/n})$ is required for all alternatives to be $2$-kings with high probability \cite{SaileSu20}, the smaller threshold of $p\in \Omega(\log n/n)$ suffices in order for most of the alternatives to be included.

\begin{theorem}
\label{thm:2king-many}
Assume that a tournament $T$ is generated according to the generalized random model, and that
$
p_{i,j} \in \left[50\log n/n, 1 - 50\log n/n\right]
$
for all $i\ne j$.
Then with high probability, at least $0.9n$ alternatives in $T$ are $2$-kings.
\end{theorem}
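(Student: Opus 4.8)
The plan is to separate the alternatives according to their expected outdegree and argue that the relatively few alternatives of small expected outdegree are the only ones that might fail to be $2$-kings. I fix a threshold fraction $\beta$ with $0.04 < \beta < 0.05$ (for concreteness, $\beta = 0.045$), and let $L$ denote the set of alternatives whose expected outdegree is at least $\beta n$. First I would show that $|L| \ge 0.9n$, and then show that, with high probability, every alternative in $L$ is a $2$-king; together these give the theorem, since every $2$-king in $L$ is in particular a $2$-king of $T$.

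For the second part, I fix $x_i \in L$ and bound the probability that $x_i$ fails to be a $2$-king. An alternative $x_m \ne x_i$ cannot be reached from $x_i$ by a path of length at most two exactly when $x_m \succ x_i$ and, for every $x_k$ with $k \ne i,m$, it is not the case that $x_i \succ x_k$ and $x_k \succ x_m$. Since these conditions concern pairwise-disjoint pairs of alternatives, they are independent, so
\[
\Pr[x_m\text{ not reachable from }x_i\text{ in two steps}] = p_{m,i}\prod_{k\ne i,m}\bigl(1 - p_{i,k}p_{k,m}\bigr).
\]
Applying Lemma~\ref{lem:1+x} to each factor and using $p_{m,i}\le 1$, this is at most $\exp\bigl(-\sum_{k\ne i,m}p_{i,k}p_{k,m}\bigr)$. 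Because $p_{k,m}\ge 50\log n/n$ for all $k$ and $\sum_{k\ne i}p_{i,k}\ge \beta n$ (as $x_i\in L$), the exponent is at least $(50\log n/n)(\beta n - 1) = 50\beta\log n - o(1)$, so each such probability is at most $n^{-50\beta + o(1)}$. Taking a union bound over the at most $n$ choices of $x_m$ shows that $x_i$ fails to be a $2$-king with probability at most $n^{1-50\beta + o(1)}$; with $\beta = 0.045$ this is $n^{-1.25+o(1)} = o(1/n)$, and a further union bound over the at most $n$ alternatives of $L$ shows that all of them are $2$-kings with high probability.

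It remains to verify $|L|\ge 0.9n$, which follows from Lemma~\ref{lem:degree-sequence}: if $m$ alternatives had expected outdegree below $\beta n$, then applying the lemma to this set of size $m$ gives average expected outdegree at least $(m-1)/2$, whence $(m-1)/2 < \beta n$ and thus $m < 2\beta n + 1 \le 0.09n + 1 \le 0.1 n$ for large $n$. Hence at least $0.9n$ alternatives lie in $L$.

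The delicate point, and the only place where the specific constant in the hypothesis matters, is the choice of $\beta$: I need $50\beta > 2$ so that the per-alternative failure probability $n^{1-50\beta}$ survives the union bound over the (up to) $n$ alternatives of $L$, while simultaneously needing $2\beta < 0.1$ so that at most $0.1n$ alternatives fall below the outdegree threshold. These two requirements force $0.04 < \beta < 0.05$, a nonempty range precisely because the constant $50$ exceeds $2/0.05 = 40$; this is exactly what makes the argument go through at the stated threshold $50\log n/n$.
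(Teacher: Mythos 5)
Your proposal is correct and follows essentially the same route as the paper's proof: both use Lemma~\ref{lem:degree-sequence} to show that all but about $0.1n$ alternatives have expected outdegree at least $0.045n$, then bound the probability that such an alternative fails to reach a fixed target in two steps by $\exp\bigl(-\tfrac{50\log n}{n}\sum_{k}p_{i,k}\bigr)\approx n^{-2.25}$ via independence of the disjoint edge pairs and Lemma~\ref{lem:1+x}, and finish with two union bounds. The only difference is cosmetic (you define the good set by an outdegree threshold rather than sorting and taking the top $\lceil 0.9n\rceil$), and your closing remark about why $\beta$ must lie in $(0.04,0.05)$ correctly identifies the role of the constant $50$.
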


\begin{proof}
Assume without loss of generality that the alternatives before the tournament~$T$ is generated are $x_1,\dots,x_n$ in nonincreasing order of expected outdegree. Let $r = \lceil 0.9n \rceil$, and consider any $1\leq i\leq r$. We will show that the probability that $x_i$ is a $2$-king in $T$ is at least $1 - o(1/n)$. The union bound then implies that with high probability, at least $0.9n$ alternatives in $T$ are $2$-kings.

To show that $\Pr[x_i \text{ is a } 2\text{-king in } T]$ is at least $1 - o(1/n)$, the union bound again implies that it suffices to prove that the probability that there is no path of length at most two from $x_i$  to  $x_j$ is at most $o(1/n^2)$ for each alternative $x_j \ne x_i$.

We henceforth fix $1\le i\le r$ and $x_j \ne x_i$. By Lemma~\ref{lem:degree-sequence} on $x_i,x_{i+1}, \dots, x_n$, the expected outdegree of $x_i$ is at least $(n - i) / 2$, which is at least $0.045 n$ for any $n\ge 100$. As a result, for this range of $n$, we have
\begin{align*}
\Pr&[\text{there is no path of length at most two from } x_i \text{ to } x_j] \\
&= \Pr[x_j \succ x_i]\cdot \Pr[\forall k \notin \{i,j\}, x_k \succ x_i \text{ or } x_j \succ x_k] \\
&= (1 - p_{i,j})\cdot \prod_{k \notin \{i,j\}} \Pr[x_k \succ x_i \text{ or } x_j \succ x_k] \\
&= (1 - p_{i,j})\cdot \prod_{k \notin \{i,j\}} \left(1 - p_{i,k} p_{k,j}\right) \\
&\leq \exp(-p_{i,j})\cdot \prod_{k \notin \{i,j\}} \exp(- p_{i,k} p_{k,j}) \\
&\leq \exp\left(-\frac{50\log n}{n} \cdot p_{i,j}\right)\cdot \prod_{k \notin \{i,j\}} \exp\left(-\frac{50\log n}{n} p_{i,k}\right) \\
&= \exp\left(-\frac{50\log n}{n} \left(\sum_{k \neq i} p_{i,k}\right)\right) \\
&\leq \exp\left(-\frac{50\log n}{n} \cdot 0.045 n\right) \\
&= 1/n^{2.25} \\
&\in o(1/n^2).
\end{align*}
We use Lemma~\ref{lem:1+x} for the first inequality, and the assumption that the expected outdegree of $x_i$ is at least $0.045n$ for the last inequality. This concludes our proof.
\end{proof}

Our next result establishes the asymptotic tightness of the threshold in Theorem~\ref{thm:2king-many}, and implies that the transition from a small to a large number of $2$-kings occurs at $p\in\Theta(\log n/n)$ for both random models.

\begin{theorem}
\label{thm:2king-few}
Assume that a tournament $T$ is generated according to the Condorcet random model, and that 
$
p \le 0.1\log n/n.
$
Then with high probability, at most $\sqrt{n} \log n$ alternatives in $T$ are $2$-kings.
\end{theorem}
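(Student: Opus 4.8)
The plan is to show that in the Condorcet random model with $p \le 0.1 \log n / n$, the weak alternatives cannot be $2$-kings, so that almost all $2$-kings must come from a small ``top'' group. Concretely, I would let $r = \lceil \sqrt{n} \log n \rceil$ and consider the bottom portion of the ordering, the alternatives $x_{r+1}, \dots, x_n$. The key observation is that an alternative $x_j$ fails to be a $2$-king whenever there is some target $x_i$ that $x_j$ can neither reach directly (i.e.\ $x_i \succ x_j$) nor reach through an intermediary. For a weak alternative $x_j$ (large index), the natural obstruction is the strongest alternative $x_1$: since $x_j$ is weak, it is very likely that $x_1 \succ x_j$, and moreover $x_j$ can reach $x_1$ via a length-two path only through some $x_k$ with $x_j \succ x_k$ and $x_k \succ x_1$, an event of probability $p^2$ per intermediary. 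I would therefore bound, for each fixed $j > r$, the probability that $x_j$ \emph{is} a $2$-king by the probability that it can reach $x_1$ in at most two steps.

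The main computation is as follows. Either $x_j \succ x_1$ directly (probability $p$), or there exists an intermediary $x_k$ ($k \notin \{1,j\}$) with $x_j \succ x_k$ and $x_k \succ x_1$. By independence and the union bound, the probability that $x_j$ reaches $x_1$ within two steps is at most
\begin{align*}
p + \sum_{k \notin \{1,j\}} \Pr[x_j \succ x_k]\cdot \Pr[x_k \succ x_1] \le p + (n-2)\, p \cdot p \le p(1 + np),
\end{align*}
where I used $\Pr[x_j \succ x_k], \Pr[x_k \succ x_1] \le p$ (this holds because in the Condorcet model a weaker-indexed alternative is favored, so for $k$ of either relative rank each of these ``upset'' probabilities is at most $\max\{p, 1-p\}$; care is needed here, and I would instead argue via the specific direction as below). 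Since $p \le 0.1\log n/n$, we get $np \le 0.1\log n$, so this bound is $O(p \log n) = O(\log^2 n / n)$. Hence the expected number of alternatives among $x_{r+1},\dots,x_n$ that can reach $x_1$ within two steps is $O(\log^2 n)$, which is $o(\sqrt n \log n)$. By Markov's inequality, with high probability at most $\sqrt n \log n$ of these weak alternatives reach $x_1$, so all but at most $\sqrt n \log n$ of them fail to be $2$-kings; together with the at most $r = O(\sqrt n \log n)$ top alternatives, the total number of $2$-kings is $O(\sqrt n \log n)$, matching the claim up to the constant (which can be absorbed by choosing $r$ and the Markov threshold appropriately).

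The subtle point, and the main obstacle, is the directional asymmetry of the Condorcet model: the bound $\Pr[x_j \succ x_k] \le p$ only holds when $j < k$ (a stronger alternative losing), whereas for $j > k$ we have $\Pr[x_j \succ x_k] = p$ as well—in fact in the Condorcet model the upset probability is exactly $p$ in every pair, so $\Pr[x_a \succ x_b] \le \max\{p,1-p\} = 1-p$ is too weak, but the product structure saves us only when at least one of the two edges is an upset. The clean way to handle this is to fix the target $x_1$ (the strongest alternative), so that $x_k \succ x_1$ is always an upset of probability exactly $p$, and to require $x_j \succ x_k$ which for $j > k$ is also an upset of probability $p$ while for $j < k$ is a non-upset; I would split the intermediary sum into $k<j$ and $k>j$ and observe that in the first regime $\Pr[x_k \succ x_1]=p$ while in the second $\Pr[x_j\succ x_k]=p$ and $\Pr[x_k\succ x_1]=p$, so each term is at most $p$, yielding the $(n-2)p^2$ bound uniformly. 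This careful bookkeeping of which edges are upsets is exactly the step where an otherwise routine union-bound argument can go wrong, so I would present it explicitly rather than brush past it.
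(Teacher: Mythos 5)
Your reduction of ``$x_j$ is a $2$-king'' to ``$x_j$ reaches $x_1$ in at most two steps'' is sound, but the probability estimate for the latter event is wrong, and the error is fatal rather than a constant-factor issue. In the Condorcet model $\Pr[x_j \succ x_k] = 1-p$ whenever $j < k$, so for an intermediary $x_k$ with $k > j$ the term $\Pr[x_j\succ x_k]\cdot\Pr[x_k\succ x_1]$ equals $(1-p)p$, not $p^2$; your final paragraph asserts the opposite direction ($\Pr[x_j\succ x_k]=p$ for $k>j$), and the fallback observation that ``each term is at most $p$'' only yields the bound $(n-2)p\approx 0.1\log n$, which exceeds $1$ and is useless. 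In fact, for $p$ at the top of the allowed range the event you are bounding has probability close to $1$ for most $j$: the chance that $x_j$ fails to reach $x_1$ in two steps is $(1-p)\left(1-p^2\right)^{j-2}\left(1-(1-p)p\right)^{n-j}\approx e^{-p(n-j)}$, which for $j=n/2$ is about $n^{-0.05}=o(1)$. Intuitively, $x_1$ is beaten directly by about $pn=\Theta(\log n)$ alternatives spread throughout the ordering, and a typical $x_j$ dominates at least one of them with probability $1-o(1)$, so $x_1$ is the wrong obstruction and almost every alternative does reach it in two steps.

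The missing idea is to use a target that dominates \emph{all} of the weak alternatives, so that a length-two path from a weak alternative is forced to route through a strong intermediary. The paper takes $V_{\text{top}}=\{x_1,\dots,x_r\}$ with $r=\lceil\sqrt n\rceil$ and shows (event $E_1$) that with high probability some $x_{i^*}\in V_{\text{top}}$ dominates every alternative outside $V_{\text{top}}$: each candidate succeeds with probability $(1-p)^{n-r}\ge n^{-0.2}$, and with $r$ independent candidates the failure probability is at most $\left(1-n^{-0.2}\right)^{r}\le e^{-n^{0.3}}$. Given $E_1$, a bottom alternative can be a $2$-king only if it beats some member of $V_{\text{top}}$ (a bottom intermediary cannot work, since $x_{i^*}$ beats all of them), and the number of such bottom alternatives is at most the number of top--bottom upsets, which has mean $pr(n-r)\le 0.1\sqrt n\log n$ and concentrates by Chernoff. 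Your outer structure---split off a small top set, bound the surviving bottom alternatives, combine, and absorb constants---is the right shape, but without an all-dominating target the two-step-reachability bound collapses. Note that the needed target exists only because one searches over $\sqrt n$ candidates; no single fixed alternative such as $x_1$ dominates all others with non-vanishing probability in this regime.
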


\begin{proof}
Let $n\ge 4$ and $r = \left\lceil \sqrt{n} \right\rceil$. 
Let us partition the set $V = \{x_1, \dots, x_n\}$ into two parts: $V_{\text{top}} := \{x_1, \dots, x_r\}$ and $V_{\text{bottom}} := \{x_{r + 1}, \dots, x_n\}$. Furthermore, let $S$ denote the set of all alternatives in $V_{\text{bottom}}$ that dominate at least one alternative in $V_{\text{top}}$.
We define the following two events:
\begin{itemize}
\item $E_1$: There exists $x_{i^*} \in V_{\text{top}}$ such that $x_{i^*} \succ V_{\text{bottom}}$.
\item $E_2$: $|S| \leq 0.2 r \log n$.
\end{itemize}
Before we proceed, let us note that if both events occur, there are at most $\sqrt{n} \log n$ $2$-kings. To see this, observe that when $E_1$ occurs, an alternative $x \in V_{\text{bottom}}$ can be a $2$-king only if $x \in S$. As a result, when both events occur, the number of $2$-kings is at most $r + |S|$, which is at most $\sqrt{n} \log n$ for any sufficiently large $n$.
We now argue that each of $E_1, E_2$ occurs with high probability; the union bound then completes the proof.

We start by showing that $E_1$ occurs with high probability. We write the probability of the complement event $\neg E_1$ as
\begin{align*}
\Pr[\neg E_1] 
&= \Pr[\forall x_i \in V_{\text{top}}, \exists x_j \in V_{\text{bottom}}, x_j \succ x_i] \\
&= \prod_{i=1}^r \Pr[\exists x_j \in V_{\text{bottom}}, x_j \succ x_i] \\
&= \prod_{i=1}^r \left(1 - \Pr[\forall x_j \in V_{\text{bottom}}, x_i \succ x_j]\right) \\
&= \prod_{i=1}^r \left(1 - \prod_{j=r+1}^n \Pr[x_i \succ x_j]\right) \\
&= \prod_{i=1}^r \left(1 - (1 - p)^{n-r}\right) \\
&= \left(1 - (1 - p)^{n-r}\right)^r \\
&\leq \left(1 - (1 - p)^n\right)^r \\
&\leq \left(1 - \frac{1}{(1 + 2p)^n}\right)^r \\
&\leq \left(1 - \frac{1}{e^{2pn}}\right)^r \\
&\leq \left(1 - \frac{1}{n^{0.2}}\right)^r \leq e^{-r / n^{0.2}} \leq e^{-n^{0.3}}.
\end{align*}
The second inequality holds since $(1-p)(1+2p)\ge 1$ for all $p\in [0,1/2]$, while the third and fifth inequalities follow from Lemma~\ref{lem:1+x}.
Thus, $E_1$ occurs with high probability. 

Next, we consider $E_2$. Let $P := \{(x_i, x_j) \in V_{\text{top}} \times V_{\text{bottom}} \mid x_j \succ x_i\}$. Notice that each $(x_i, x_j) \in V_{\text{top}} \times V_{\text{bottom}}$ belongs to $P$ independently with probability $p \leq 0.1 \log n / n$. We will bound the probability that $|P| \geq 0.2 r \log n$. This probability does not increase when $p$ decreases, so it suffices to consider $p = 0.1\log n / n$. For this $p$, we have $\E[|P|] = p r (n - r) \leq 0.1r \log n$, and moreover $pr(n - r) \geq 0.01r \log n$ for any $n \geq 4$. As a result, by Lemma~\ref{lem:chernoff}, we have
\begin{align*}
\Pr\left[|P| \geq 0.2 r \log n\right]
&\leq \Pr\left[|P| \geq 2\E[|P|]\right] \\
&\leq \exp\left(-\frac{\E[P]}{3}\right) \\
&\leq \exp\left(-\frac{0.01r \log n}{3}\right) \in o(1).
\end{align*}
Finally, notice that $|S| \leq |P|$, which implies that $E_2$ happens with high probability, as desired. 
\end{proof}

Finally, we show that as long as $p\in\omega(1/n)$, a large number of alternatives are already likely to be $3$-kings (and therefore $k$-kings for every $k\ge 3$).
This bound is again tight: when $p\in O(1/n)$ and the tournament is generated from the Condorcet random model, there is at least a constant probability that the strongest alternative dominates all remaining alternatives (in which case it is the only $k$-king for each $k\ge 2$).
In particular, for each $k\ge 3$, the transition from a small to a large number of $k$-kings occurs at $p\in\Theta(1/n)$ for both random models.

\begin{theorem}
\label{thm:3king-many}
Assume that a tournament $T$ is generated according to the generalized random model, and that $p_{i,j} \in \omega(1/n)$ for all $i\neq j$. Then with high probability, at least $0.9n$ alternatives in $T$ are $3$-kings.
\end{theorem}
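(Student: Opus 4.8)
The plan is to show that the number of alternatives failing to be $3$-kings is at most $0.1n$ with high probability, by splitting the alternatives according to their outdegree. By Lemma~\ref{lem:degree-sequence} applied to the $m$ alternatives of smallest outdegree, any such set has average outdegree at least $(m-1)/2$, so for any constant $\rho$ the number of alternatives with outdegree below $\rho n$ is at most $2\rho n$. Fixing $\rho = 0.04$, these at most $0.08n$ \emph{weak} sources can be written off as potential non-kings for free. It then suffices to prove that the expected number of alternatives $x_i$ of outdegree at least $\rho n$ that are \emph{not} $3$-kings is $o(n)$: Markov's inequality would bound this count by $0.01n$ with high probability, for a total of at most $0.08n + 0.01n < 0.1n$ non-kings.

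Fix a source $x_i$ with outdegree at least $\rho n$, write $A_i$ for its out-neighborhood, and let $B_j$ denote the set of dominators of a target $x_j$. Exactly as in the proof of Theorem~\ref{thm:3king-positive}, $x_i$ fails to reach $x_j$ by a path of length at most three precisely when $x_j \succ x_i$, $A_i \cap B_j = \emptyset$, and $B_j \succ A_i$. I would split the targets by indegree, writing $p_{\min} := \min_{i\ne j} p_{i,j} \in \omega(1/n)$ (so also $p_{i,j} \le 1 - p_{\min}$). When $x_j$ has indegree at least $\rho n$, both $A_i$ and $B_j$ have size $\Omega(n)$; conditioning on the neighborhoods of $x_i$ and $x_j$, the event $B_j \succ A_i$ depends only on the independent edges between these two sets, and so occurs with probability at most $(1-p_{\min})^{|A_i|\,|B_j|} \le e^{-\Omega(p_{\min} n^2)}$. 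Summing over all ordered pairs keeps this contribution $o(1)$. By Lemma~\ref{lem:degree-sequence} once more, at most $2\rho n$ targets have indegree below $\rho n$, so only these \emph{strong} targets remain.

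The main obstacle is exactly these strong targets. When $x_j$ has small indegree, $B_j$ may be tiny, and the bound from Lemma~\ref{lem:helper-subsets-dominates} degrades to $e^{-\Theta(p_{\min} n)} = o(1)$ per pair, which is far too weak to survive a union bound over sources in the delicate regime where $p_{\min} n \to \infty$ but $p_{\min} n \in o(\log n)$; moreover there may be $\Theta(n)$ such targets. To handle them I would avoid the naive union over strong targets and argue source-by-source through the set $F_i := \{v : v \succ A_i\}$ of common dominators of $A_i$. A failure of $x_i$ forces a target $x_j$ with $x_j \succ A_i$ and $B_j \subseteq F_i$; the former places $x_j \in F_i$, while the latter says $x_j$ dominates all of $V \setminus F_i$. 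Since $|A_i| \ge \rho n$, we have $\E[|F_i|] \le n(1-p_{\min})^{\rho n} = n\,e^{-\Theta(p_{\min} n)}$, which is $o(n)$; one then controls the probability that some member of $F_i$ dominates the entire complement $V \setminus F_i$. The genuinely delicate case is when $F_i$ is itself large (a dense cluster of mutually strong alternatives lies above $x_i$): here the degree-sequence constraints must be invoked most carefully, balancing the size of $F_i$ against the probability that one of its members beats all of $V\setminus F_i$, so as to keep the total expected number of failing high-outdegree sources at $o(n)$.

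Assembling the estimates, the contribution of high-indegree targets to the expected number of failing high-outdegree sources is $o(1)$, that of low-indegree targets is $o(n)$ by the $F_i$ argument, and the weak sources number at most $0.08n$ deterministically. A single application of Markov's inequality then yields at least $0.9n$ three-kings with high probability.
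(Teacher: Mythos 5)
Your overall architecture is sound and close in spirit to the paper's: reduce to sources of linear outdegree, bound the expected number of failing sources, and finish with Markov's inequality; your treatment of targets with linear indegree via $(1-p_{\min})^{|A_i|\,|B_j|} \le e^{-\Omega(p_{\min}n^2)}$ is also fine. The problem is that the step you defer as ``genuinely delicate'' is not a technicality --- it is the entire content of the theorem in the regime where $p_{\min}n \to \infty$ but $p_{\min}n \in o(\log n)$ --- and the tools you actually put on the table do not close it. For a fixed high-outdegree source $x_i$ there can be $\Theta(n)$ low-indegree targets, and every bound you write down for a single such target (whether phrased as $B_j \succ A_i$ or as ``$x_j \in F_i$ and $x_j$ dominates $V\setminus F_i$'') is of the form $e^{-\Theta(p_{\min}n)}$. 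A union bound over targets, or over members of $F_i$, therefore yields $n\,e^{-\Theta(p_{\min}n)}$, which is \emph{not} $o(1)$ when $p_{\min}n \in o(\log n)$; yet you need the per-source failure probability to be $o(1)$ for Markov's inequality to give $o(n)$ failing sources. Knowing $\E[|F_i|] \le n\,e^{-\Theta(p_{\min}n)}$ does not rescue this: the resulting bound $|F_i|\cdot(1-p_{\min})^{n-|F_i|}$ is still of the same order. So the proposal, as written, establishes the theorem essentially only for $p_{\min} \in \Omega(\log n/n)$, where it is already subsumed by Theorem~\ref{thm:3king-positive}.

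The missing idea, which is the paper's key step, is to exploit the fact that the candidate targets cannot all have few dominators simultaneously. The paper conditions on $|U| \ge 0.01n$ (where $U$ is the out-neighborhood of $x_i$), restricts the tournament to $V\setminus(U\cup\{x_i\})$, and orders the remaining alternatives by indegree \emph{within that restriction}; by Lemma~\ref{lem:degree-sequence} the $j$-th alternative in this order has at least $(j-1)/2$ dominators there, and failure to reach it in three steps forces it \emph{and all of those dominators} to dominate every vertex of $U$. This gives a failure probability of at most $e^{-\Omega(np_{\min}(j+1))}$ for the $j$-th target --- geometrically decaying in $j$ --- so the sum over all targets is $O(e^{-\Omega(np_{\min})}) = o(1)$ with no extra factor of $n$. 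Your $F_i$ formulation is equivalent in substance (the $j$-th target's dominators must all lie in $F_i$), but without this ordered, per-rank bound the ``balancing'' you allude to has no quantitative content. Incidentally, your preliminary split of sources by realized outdegree is a harmless variation on the paper's split by expected outdegree followed by a Chernoff bound; either works.
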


\begin{proof}
Assume without loss of generality that the alternatives before the tournament~$T$ is generated are $x_1,\dots,x_n$ in nonincreasing order of expected outdegree. 
Let $p = \min_{i \ne j} p_{i, j}$; by our assumption, $p \in \omega(1/n)$.
Furthermore, let $r = \lceil 0.95n \rceil$, and consider any $1\leq i\leq r$. 

We will show that the probability that $x_i$ is \emph{not} a $3$-king in $T$ is at most $o(1)$. Let $q_i$ denote this probability, and let $q = \max_{i=1}^r q_i$. 
By linearity of expectation, for any $n\ge 100$, we have
\begin{align*}
\E[|\{i \mid 1 \leq i \leq r \text{ and } x_i \text{ is not a $3$-king}\}|] \le 0.96nq.
\end{align*}
As a result, by Markov's inequality, we can derive
\begin{align*}
\Pr&[\text{there are less than } 0.9n \text{ $3$-kings}] \\ &\le \Pr[|\{i \mid 1 \leq i \leq r \text{ and } x_i \text{ is not a $3$-king}\}| \geq 0.05 n] \\
&\le \frac{0.96nq}{0.05n} = 96q/5 \in o(1),
\end{align*}
as desired.

It remains to show that for each $1\le i\le r$, the probability that $x_i$ is not a $3$-king is $o(1)$. By Lemma~\ref{lem:degree-sequence} on $x_i,x_{i+1}, \dots, x_n$, the expected outdegree of $x_i$ is at least $(n - i) / 2$, which is at least $0.02 n$ for any $n\ge 100$. Let $U$ denote the set of alternatives dominated by $x_i$ in $T$. Note that $|U|$ is a sum of $n - 1$ independent Bernoulli random variables, and $\E[|U|] \geq 0.02 n$. Applying Lemma~\ref{lem:chernoff}, we have
\begin{align*}
\Pr[|U| < 0.01n] \leq \exp\left(-\frac{1}{4} \cdot \frac{0.02n}{2}\right) \in o(1).
\end{align*}
Thus, we may henceforth assume that $|U| \geq 0.01n$.

Now, let $T_i$ denote the tournament $T$ restricted to $V \setminus (U \cup \{x_i\})$. Let us order the alternatives in $T_i$ as $x_{\psi(1)}, \dots, x_{\psi(L)}$ in nondecreasing order of indegree in $T_i$, where $L = n - |U| - 1$. For each $j = 1, \dots, L$, let $D_j$ denote the set of alternatives in $T_i$ that dominate $x_{\psi(j)}$. We have
\begin{align*}
\Pr[&\text{there is no path of length at most three from } x_i \text{ to } x_{\psi(j)}] \\
&\leq \Pr[D_j \cup \{x_j\} \succ U] \\
&= \prod_{x_k \in D_j \cup \{x_j\}} \prod_{x_\ell \in U}\Pr[x_k \succ x_\ell] \\
&= \prod_{x_k \in D_j \cup \{x_j\}} \prod_{x_\ell \in U} (1 - p_{\ell k}) \\
&\leq \prod_{x_k \in D_j \cup \{x_j\}} \prod_{x_\ell \in U} (1 - p) \\
&= (1 - p)^{(|D_j| + 1)|U|} \leq e^{-0.02np(|D_j| + 1)} \leq e^{-0.01np(j + 1)},
\end{align*}
where in the last two inequalities we use Lemma~\ref{lem:1+x} and the fact that $|D_j| \geq (j -  1)/2$, which follows from Lemma~\ref{lem:degree-sequence}, respectively.

Hence, by the union bound, the probability that some alternative in $T_i$ is unreachable from $x_i$ in at most three steps is no more than
\begin{align*}
\sum_{j=1}^L e^{-0.01np(j + 1)} \leq \frac{e^{-0.02np}}{1 - e^{0.01np}},
\end{align*}
which is $o(1)$ due to our assumption that $np = \omega(1)$.
As a result, the probability that $x_i$ is not a $3$-king is $o(1)$, as claimed.
\end{proof}

\section{Experiments}
\label{sec:experiments}

In this section, we present experimental results on the behavior of $k$-kings in random tournaments.
Besides the Condorcet random model, we also consider an arguably more realistic model introduced by Saile and Suksompong~\cite{SaileSu20} called the ``gap model''.

\subsection{Condorcet Random Model}

We start with the Condorcet random model.
For each $n\in\{10,20,40,60,80,100\}$, each $p\in\{0, 0.02, 0.04, 0.06, \dots, 0.5\}$, and each $k\in\{2,3,4,n-1\}$, we generated 10000 tournaments according to the Condorcet random model with parameters $n$ and $p$, and took the average of the number of $k$-kings in the generated tournaments.
In order to determine the number of $k$-kings, a simple iterative algorithm running in time $O(n^2)$ suffices for the case $k = n-1$ \cite[p.~71]{BrandtBrHa16}, while an efficient algorithm based on matrix multiplication can be used for the cases $k=2,3,4$ \cite[p.~68]{BrandtBrHa16}.\footnote{The referenced book chapter only gives an algorithm for the case $k=2$, but a similar approach works for $k=3$ and $k=4$.}
The results are shown in Figure~\ref{fig:condorcet}; for ease of comparison, we illustrate the \emph{percentage} (as opposed to the \emph{number}) of $k$-kings among all alternatives in the tournament.\footnote{All averages presented in this paper have standard error within $\pm 0.5\%$.}

\begin{figure}
	\centering
	\begin{subfigure}{.5\textwidth} 
		\centering
\begin{tikzpicture}[scale=0.85]
\begin{axis}[
    title={(a) $n = 10$},
    xlabel={},
    ylabel={},
    xmin=0, xmax=0.5,
    ymin=0, ymax=100,
    xtick= {0,0.1,0.2,0.3,0.4,0.5},
    ytick={0,20,40,60,80,100},
    legend pos=south east,
    ymajorgrids=true,
		xmajorgrids=true,
    grid style=dashed,
]

\addplot[
    color=black,
	  mark size=3,
    mark=triangle*,
  	fill opacity=0.8,
  	draw opacity=0.8,
    ]
    coordinates {
    (0.0,10.0)(0.02,13.265)(0.04,16.634)(0.06,20.168)(0.08,23.493000000000002)(0.1,27.094)(0.12,30.43)(0.14,34.159000000000006)(0.16,37.126)(0.18,40.765)(0.2,43.894)(0.22,47.316)(0.24,50.66799999999999)(0.26,52.720000000000006)(0.28,56.104000000000006)(0.3,58.96)(0.32,61.527)(0.34,63.403999999999996)(0.36,65.32300000000001)(0.38,67.182)(0.4,69.075)(0.42,69.79499999999999)(0.44,71.35600000000001)(0.46,71.804)(0.48,72.61999999999999)(0.5,72.413)
    };
    \addlegendentry{$k=2$}

\addplot[
	    color=red,
	    mark=square*,
			fill opacity=0.8,
  	  draw opacity=0.8,
	    ]
	    coordinates {
	    (0.0,10.0)(0.02,18.325)(0.04,26.413999999999998)(0.06,33.297)(0.08,39.99)(0.1,46.608999999999995)(0.12,52.827999999999996)(0.14,58.772000000000006)(0.16,64.21000000000001)(0.18,69.078)(0.2,73.39699999999999)(0.22,77.435)(0.24,81.52)(0.26,83.69899999999998)(0.28,86.644)(0.3,88.73799999999999)(0.32,90.83000000000001)(0.34,92.00699999999999)(0.36,93.21199999999999)(0.38,94.378)(0.4,95.083)(0.42,95.833)(0.44,96.289)(0.46,96.57199999999999)(0.48,96.40700000000001)(0.5,96.44399999999999)
	    };
	    \addlegendentry{$k=3$}

\addplot[
    color=blue,
    mark=otimes*,
		fill opacity=0.8,
  	draw opacity=0.8,
    ]
    coordinates {
    (0.0,10.0)(0.02,18.832)(0.04,28.267000000000003)(0.06,37.582)(0.08,45.37499999999999)(0.1,53.473)(0.12,60.534000000000006)(0.14,66.88)(0.16,72.527)(0.18,76.026)(0.2,80.323)(0.22,83.353)(0.24,86.464)(0.26,88.833)(0.28,90.946)(0.3,92.461)(0.32,93.85600000000001)(0.34,95.26500000000001)(0.36,95.47900000000001)(0.38,96.409)(0.4,96.892)(0.42,97.13699999999999)(0.44,97.50800000000001)(0.46,98.03200000000001)(0.48,97.941)(0.5,98.042)
    };
    \addlegendentry{$k=4$}
    
\addplot[
    color=green,
    mark=diamond*,
		fill opacity=0.8,
  	draw opacity=0.8,
    ]
    coordinates {
    (0.0,10.0)(0.02,19.584)(0.04,29.434999999999995)(0.06,38.033)(0.08,47.284)(0.1,55.73199999999999)(0.12,62.898)(0.14,68.672)(0.16,73.612)(0.18,77.485)(0.2,81.82300000000001)(0.22,84.64000000000001)(0.24,87.40299999999999)(0.26,89.821)(0.28,91.779)(0.3,92.956)(0.32,94.09299999999999)(0.34,95.062)(0.36,95.931)(0.38,96.58099999999999)(0.4,96.97800000000001)(0.42,97.38899999999998)(0.44,97.523)(0.46,98.02199999999999)(0.48,97.981)(0.5,98.067)
    };
    \addlegendentry{$k=n-1$}
\end{axis}
\end{tikzpicture}
\end{subfigure}\begin{subfigure}{.5\textwidth} 
	\centering
\begin{tikzpicture}[scale=0.85]
\begin{axis}[
    title={(b) $n = 20$},
    xlabel={},
    ylabel={},
    xmin=0, xmax=0.5,
    ymin=0, ymax=100,
    xtick= {0,0.1,0.2,0.3,0.4,0.5},
    ytick={0,20,40,60,80,100},
    legend pos=south east,
    ymajorgrids=true,
		xmajorgrids=true,
    grid style=dashed,
]

\addplot[
    color=black,
    mark size=3,
    mark=triangle*,
  	fill opacity=0.8,
  	draw opacity=0.8,
    ]
    coordinates {
    (0.0,5.0)(0.02,8.882)(0.04,13.4345)(0.06,18.641)(0.08,24.655)(0.1,30.994999999999997)(0.12,37.105000000000004)(0.14,43.3565)(0.16,49.596999999999994)(0.18,55.7485)(0.2,60.9345)(0.22,65.94800000000001)(0.24,70.4955)(0.26,75.0725)(0.28,78.53)(0.3,81.9695)(0.32,85.04350000000001)(0.34,87.53450000000001)(0.36,89.604)(0.38,91.19500000000001)(0.4,92.56750000000001)(0.42,93.74050000000001)(0.44,94.487)(0.46,95.091)(0.48,95.42949999999999)(0.5,95.547)
    };
    \addlegendentry{$k=2$}

\addplot[
	    color=red,
        mark=square*,
        fill opacity=0.8,
        draw opacity=0.8,
	    ]
	    coordinates {
	    (0.0,5.0)(0.02,21.823999999999998)(0.04,37.4345)(0.06,51.078)(0.08,63.229)(0.1,73.2085)(0.12,81.47600000000001)(0.14,87.381)(0.16,91.47)(0.18,94.586)(0.2,96.8005)(0.22,97.88050000000001)(0.24,98.6485)(0.26,99.275)(0.28,99.5225)(0.3,99.656)(0.32,99.87199999999999)(0.34,99.8945)(0.36,99.92749999999998)(0.38,99.976)(0.4,99.99100000000001)(0.42,99.9785)(0.44,99.98849999999999)(0.46,100.0)(0.48,99.9795)(0.5,99.9985)
    	   };
	    \addlegendentry{$k=3$}

\addplot[
    color=blue,
    mark=otimes*,
    fill opacity=0.8,
  	draw opacity=0.8,
    ]
    coordinates {
     (0.0,5.0)(0.02,26.117499999999996)(0.04,45.2775)(0.06,61.9015)(0.08,74.079)(0.1,82.72200000000001)(0.12,88.708)(0.14,92.73999999999998)(0.16,95.554)(0.18,96.892)(0.2,98.0055)(0.22,98.6985)(0.24,99.1845)(0.26,99.59949999999999)(0.28,99.73850000000002)(0.3,99.821)(0.32,99.84299999999999)(0.34,99.9035)(0.36,99.938)(0.38,99.97049999999999)(0.4,99.9995)(0.42,99.962)(0.44,99.999)(0.46,99.9995)(0.48,99.99050000000001)(0.5,99.9995)
    };
    \addlegendentry{$k=4$}
    
\addplot[
    color=green,
    mark=diamond*,
		fill opacity=0.8,
  	draw opacity=0.8,
    ]
    coordinates {
    (0.0,5.0)(0.02,29.024)(0.04,51.0865)(0.06,65.9635)(0.08,76.8885)(0.1,84.59899999999999)(0.12,90.11100000000002)(0.14,93.41799999999999)(0.16,95.165)(0.18,96.938)(0.2,98.0455)(0.22,98.712)(0.24,99.22999999999999)(0.26,99.52450000000002)(0.28,99.719)(0.3,99.831)(0.32,99.84349999999999)(0.34,99.92749999999998)(0.36,99.9405)(0.38,99.9795)(0.4,99.99000000000001)(0.42,99.9995)(0.44,100.0)(0.46,100.0)(0.48,100.0)(0.5,100.0)
    };
    \addlegendentry{$k=n-1$}
\end{axis}
\end{tikzpicture}
\end{subfigure}

\vspace{2mm}

	\centering
	\begin{subfigure}{.5\textwidth} 
		\centering
\begin{tikzpicture}[scale=0.85]
\begin{axis}[
    title={(c) $n = 40$},
    xlabel={},
    ylabel={},
    xmin=0, xmax=0.5,
    ymin=0, ymax=100,
    xtick= {0,0.1,0.2,0.3,0.4,0.5},
    ytick={0,20,40,60,80,100},
    legend pos=south east,
    ymajorgrids=true,
		xmajorgrids=true,
    grid style=dashed,
]

\addplot[
    color=black,
	  mark size=3,
    mark=triangle*,
  	fill opacity=0.8,
  	draw opacity=0.8,
    ]
    coordinates {
    (0.0,2.5)(0.02,7.3862499999999995)(0.04,14.917000000000002)(0.06,25.252)(0.08,36.514)(0.1,47.60525)(0.12,57.35324999999999)(0.14,66.0945)(0.16,73.67225)(0.18,79.779)(0.2,84.846)(0.22,89.0345)(0.24,92.3395)(0.26,94.64925)(0.28,96.51624999999999)(0.3,97.68599999999999)(0.32,98.54175000000001)(0.34,99.09649999999999)(0.36,99.4315)(0.38,99.67175)(0.4,99.79775)(0.42,99.88075)(0.44,99.92775)(0.46,99.95775)(0.48,99.96124999999999)(0.5,99.96650000000001)
    };
    \addlegendentry{$k=2$}

\addplot[
	    color=red,
        mark=square*,
        fill opacity=0.8,
        draw opacity=0.8,
	    ]
	    coordinates {
    (0.0,2.5)(0.02,35.027)(0.04,61.94625)(0.06,81.0895)(0.08,90.97149999999999)(0.1,96.38925)(0.12,98.58700000000002)(0.14,99.4975)(0.16,99.7995)(0.18,99.92974999999998)(0.2,99.96700000000001)(0.22,99.96874999999999)(0.24,99.99924999999999)(0.26,100.0)(0.28,100.0)(0.3,100.0)(0.32,100.0)(0.34,100.0)(0.36,100.0)(0.38,100.0)(0.4,100.0)(0.42,100.0)(0.44,100.0)(0.46,100.0)(0.48,100.0)(0.5,100.0)
	    };
	    \addlegendentry{$k=3$}

\addplot[
    color=blue,
    mark=otimes*,
    fill opacity=0.8,
  	draw opacity=0.8,
    ]
    coordinates {
    (0.0,2.5)(0.02,45.407)(0.04,75.952)(0.06,89.7345)(0.08,95.63275000000002)(0.1,98.2005)(0.12,99.2235)(0.14,99.72775)(0.16,99.83124999999998)(0.18,99.94075000000001)(0.2,99.96924999999999)(0.22,99.99000000000001)(0.24,100.0)(0.26,99.99974999999999)(0.28,100.0)(0.3,100.0)(0.32,100.0)(0.34,100.0)(0.36,100.0)(0.38,100.0)(0.4,100.0)(0.42,100.0)(0.44,100.0)(0.46,100.0)(0.48,100.0)(0.5,100.0)
    };
    \addlegendentry{$k=4$}
\addplot[
    color=green,
    mark=diamond*,
		fill opacity=0.8,
  	draw opacity=0.8,
    ]
    coordinates {
    (0.0,2.5)(0.02,53.2855)(0.04,78.62225)(0.06,90.84225)(0.08,95.8745)(0.1,98.17675)(0.12,99.23724999999999)(0.14,99.71875000000001)(0.16,99.87925)(0.18,99.92125)(0.2,99.99000000000001)(0.22,99.99000000000001)(0.24,99.99024999999999)(0.26,100.0)(0.28,100.0)(0.3,100.0)(0.32,100.0)(0.34,100.0)(0.36,100.0)(0.38,100.0)(0.4,100.0)(0.42,100.0)(0.44,100.0)(0.46,100.0)(0.48,100.0)(0.5,100.0)
    };
    \addlegendentry{$k=n-1$}
\end{axis}
\end{tikzpicture}
\end{subfigure}\begin{subfigure}{.5\textwidth} 
	\centering
\begin{tikzpicture}[scale=0.85]
\begin{axis}[
    title={(d) $n = 60$},
    xlabel={},
    ylabel={},
    xmin=0, xmax=0.5,
    ymin=0, ymax=100,
    xtick= {0,0.1,0.2,0.3,0.4,0.5},
    ytick={0,20,40,60,80,100},
    legend pos=south east,
    ymajorgrids=true,
	  xmajorgrids=true,
    grid style=dashed,
]

\addplot[
    color=black,
    mark size=3,
    mark=triangle*,
  	fill opacity=0.8,
  	draw opacity=0.8,
    ]
    coordinates {
    (0.0,1.6666666666666667)(0.02,7.470333333333333)(0.04,19.250166666666665)(0.06,34.91833333333333)(0.08,50.040499999999994)(0.1,62.2445)(0.12,72.59333333333332)(0.14,80.53416666666666)(0.16,86.53483333333334)(0.18,91.10333333333332)(0.2,94.4095)(0.22,96.60916666666667)(0.24,98.077)(0.26,98.946)(0.28,99.47633333333333)(0.3,99.735)(0.32,99.87283333333333)(0.34,99.9445)(0.36,99.97383333333332)(0.38,99.98933333333333)(0.4,99.99566666666666)(0.42,99.99783333333333)(0.44,99.99900000000001)(0.46,99.99949999999998)(0.48,99.99983333333333)(0.5,99.99983333333333)
    };
    \addlegendentry{$k=2$}

\addplot[
	    color=red,
        mark=square*,
        fill opacity=0.8,
        draw opacity=0.8,
	    ]
	    coordinates {
		(0.0,1.6666666666666667)(0.02,49.317833333333326)(0.04,80.37349999999999)(0.06,93.76366666666668)(0.08,98.33433333333332)(0.1,99.65)(0.12,99.94266666666667)(0.14,99.98433333333334)(0.16,99.99849999999999)(0.18,100.0)(0.2,99.99983333333333)(0.22,100.0)(0.24,100.0)(0.26,100.0)(0.28,100.0)(0.3,100.0)(0.32,100.0)(0.34,100.0)(0.36,100.0)(0.38,100.0)(0.4,100.0)(0.42,100.0)(0.44,100.0)(0.46,100.0)(0.48,100.0)(0.5,100.0)
		};
	    \addlegendentry{$k=3$}

\addplot[
    color=blue,
    mark=otimes*,
    fill opacity=0.8,
  	draw opacity=0.8,
    ]
    coordinates {
    (0.0,1.6666666666666667)(0.02,63.59316666666667)(0.04,90.10066666666667)(0.06,96.86266666666667)(0.08,99.05716666666667)(0.1,99.74133333333333)(0.12,99.94033333333333)(0.14,99.95983333333334)(0.16,99.99)(0.18,100.0)(0.2,100.0)(0.22,100.0)(0.24,100.0)(0.26,100.0)(0.28,100.0)(0.3,100.0)(0.32,100.0)(0.34,100.0)(0.36,100.0)(0.38,100.0)(0.4,100.0)(0.42,100.0)(0.44,100.0)(0.46,100.0)(0.48,100.0)(0.5,100.0)
    };
    \addlegendentry{$k=4$}
\addplot[
    color=green,
    mark=diamond*,
		fill opacity=0.8,
  	draw opacity=0.8,
    ]
    coordinates {
    (0.0,1.6666666666666667)(0.02,68.43033333333332)(0.04,91.07783333333333)(0.06,97.06683333333334)(0.08,99.33583333333334)(0.1,99.7895)(0.12,99.9595)(0.14,99.98016666666668)(0.16,99.98033333333333)(0.18,99.99016666666667)(0.2,99.99983333333333)(0.22,100.0)(0.24,100.0)(0.26,100.0)(0.28,100.0)(0.3,100.0)(0.32,100.0)(0.34,100.0)(0.36,100.0)(0.38,100.0)(0.4,100.0)(0.42,100.0)(0.44,100.0)(0.46,100.0)(0.48,100.0)(0.5,100.0)
    };
    \addlegendentry{$k=n-1$}
\end{axis}
\end{tikzpicture}
\end{subfigure}

\vspace{2mm}

	\begin{subfigure}{.5\textwidth} 
		\centering
\begin{tikzpicture}[scale=0.85]
\begin{axis}[
    title={(e) $n = 80$},
    xlabel={},
    ylabel={},
    xmin=0, xmax=0.5,
    ymin=0, ymax=100,
    xtick= {0,0.1,0.2,0.3,0.4,0.5},
    ytick={0,20,40,60,80,100},
    legend pos=south east,
    ymajorgrids=true,
		xmajorgrids=true,
    grid style=dashed,
]

\addplot[
    color=black,
    mark size=3,
    mark=triangle*,
  	fill opacity=0.8,
  	draw opacity=0.8,
    ]
    coordinates {
    (0.0,1.25)(0.02,8.55575)(0.04,25.281250000000004)(0.06,45.0295)(0.08,61.210125000000005)(0.1,73.228875)(0.12,82.191625)(0.14,88.477125)(0.16,92.86337499999999)(0.18,95.954)(0.2,97.85774999999998)(0.22,98.922125)(0.24,99.51174999999999)(0.26,99.79499999999999)(0.28,99.91287500000001)(0.3,99.96712499999998)(0.32,99.99137499999999)(0.34,99.995125)(0.36,99.99875)(0.38,99.99962500000001)(0.4,100.0)(0.42,100.0)(0.44,100.0)(0.46,100.0)(0.48,100.0)(0.5,100.0)	
    };
    \addlegendentry{$k=2$}

\addplot[
        color=red,
        mark=square*,
        fill opacity=0.8,
        draw opacity=0.8,
	    ]
	    coordinates {
    (0.0,1.25)(0.02,60.7365)(0.04,90.369)(0.06,98.345875)(0.08,99.72787500000001)(0.1,99.964875)(0.12,99.999125)(0.14,100.0)(0.16,100.0)(0.18,100.0)(0.2,100.0)(0.22,100.0)(0.24,100.0)(0.26,100.0)(0.28,100.0)(0.3,100.0)(0.32,100.0)(0.34,100.0)(0.36,100.0)(0.38,100.0)(0.4,100.0)(0.42,100.0)(0.44,100.0)(0.46,100.0)(0.48,100.0)(0.5,100.0)	
		};
	    \addlegendentry{$k=3$}

\addplot[
    color=blue,
    mark=otimes*,
    fill opacity=0.8,
  	draw opacity=0.8,
    ]
    coordinates {
    (0.0,1.25)(0.02,76.23325)(0.04,95.61175)(0.06,98.98425)(0.08,99.85925)(0.1,99.94037499999999)(0.12,99.99974999999999)(0.14,100.0)(0.16,100.0)(0.18,100.0)(0.2,100.0)(0.22,100.0)(0.24,100.0)(0.26,100.0)(0.28,100.0)(0.3,100.0)(0.32,100.0)(0.34,100.0)(0.36,100.0)(0.38,100.0)(0.4,100.0)(0.42,100.0)(0.44,100.0)(0.46,100.0)(0.48,100.0)(0.5,100.0)	
    };
    \addlegendentry{$k=4$}
\addplot[
    color=green,
    mark=diamond*,
		fill opacity=0.8,
  	draw opacity=0.8,
    ]
    coordinates {
    (0.0,1.25)(0.02,80.08275)(0.04,96.06787499999999)(0.06,99.2795)(0.08,99.741625)(0.1,99.98962499999999)(0.12,100.0)(0.14,100.0)(0.16,100.0)(0.18,100.0)(0.2,100.0)(0.22,100.0)(0.24,100.0)(0.26,100.0)(0.28,100.0)(0.3,100.0)(0.32,100.0)(0.34,100.0)(0.36,100.0)(0.38,100.0)(0.4,100.0)(0.42,100.0)(0.44,100.0)(0.46,100.0)(0.48,100.0)(0.5,100.0)
    };
    \addlegendentry{$k=n-1$}
\end{axis}
\end{tikzpicture}
\end{subfigure}\begin{subfigure}{.5\textwidth} 
	\centering
\begin{tikzpicture}[scale=0.85]
\begin{axis}[
    title={(f) $n = 100$},
    xlabel={},
    ylabel={},
    xmin=0, xmax=0.5,
    ymin=0, ymax=100,
    xtick= {0,0.1,0.2,0.3,0.4,0.5},
    ytick={0,20,40,60,80,100},
    legend pos=south east,
    ymajorgrids=true,
		xmajorgrids=true,
    grid style=dashed,
]

\addplot[
    color=black,
    mark size=3,
    mark=triangle*,
  	fill opacity=0.8,
  	draw opacity=0.8,
    ]
    coordinates {
    (0.0,1.0)(0.02,9.9392)(0.04,31.7178)(0.06,53.9256)(0.08,69.8127)(0.1,80.4828)(0.12,87.9219)(0.14,92.94149999999999)(0.16,96.221)(0.18,98.149)(0.2,99.1442)(0.22,99.65619999999998)(0.24,99.8802)(0.26,99.95740000000002)(0.28,99.9848)(0.3,99.99610000000001)(0.32,99.9984)(0.34,99.99960000000002)(0.36,100.0)(0.38,100.0)(0.4,100.0)(0.42,100.0)(0.44,100.0)(0.46,100.0)(0.48,100.0)(0.5,100.0)
    };
    \addlegendentry{$k=2$}

\addplot[
        color=red,
        mark=square*,
        fill opacity=0.8,
        draw opacity=0.8,
	    ]
	    coordinates {
    (0.0,1.0)(0.02,71.482)(0.04,95.8929)(0.06,99.536)(0.08,99.9526)(0.1,99.9897)(0.12,99.9999)(0.14,100.0)(0.16,100.0)(0.18,100.0)(0.2,100.0)(0.22,100.0)(0.24,100.0)(0.26,100.0)(0.28,100.0)(0.3,100.0)(0.32,100.0)(0.34,100.0)(0.36,100.0)(0.38,100.0)(0.4,100.0)(0.42,100.0)(0.44,100.0)(0.46,100.0)(0.48,100.0)(0.5,100.0)	
		};
	    \addlegendentry{$k=3$}

\addplot[
    color=blue,
    mark=otimes*,
    fill opacity=0.8,
  	draw opacity=0.8,
    ]
    coordinates {
    (0.0,1.0)(0.02,84.83139999999999)(0.04,98.1277)(0.06,99.6317)(0.08,99.96)(0.1,100.0)(0.12,100.0)(0.14,100.0)(0.16,100.0)(0.18,100.0)(0.2,100.0)(0.22,100.0)(0.24,100.0)(0.26,100.0)(0.28,100.0)(0.3,100.0)(0.32,100.0)(0.34,100.0)(0.36,100.0)(0.38,100.0)(0.4,100.0)(0.42,100.0)(0.44,100.0)(0.46,100.0)(0.48,100.0)(0.5,100.0)	
    };
    \addlegendentry{$k=4$}
\addplot[
    color=green,
    mark=diamond*,
		fill opacity=0.8,
  	draw opacity=0.8,
    ]
    coordinates {
    (0.0,1.0)(0.02,85.5846)(0.04,98.2273)(0.06,99.8193)(0.08,99.9602)(0.1,99.9999)(0.12,100.0)(0.14,100.0)(0.16,100.0)(0.18,100.0)(0.2,100.0)(0.22,100.0)(0.24,100.0)(0.26,100.0)(0.28,100.0)(0.3,100.0)(0.32,100.0)(0.34,100.0)(0.36,100.0)(0.38,100.0)(0.4,100.0)(0.42,100.0)(0.44,100.0)(0.46,100.0)(0.48,100.0)(0.5,100.0)
    };
    \addlegendentry{$k=n-1$}
\end{axis}
\end{tikzpicture}
\end{subfigure}
\caption{Percentage of alternatives that are $k$-kings in tournaments generated by the Condorcet random model with probability $p$, for $k\in\{2,3,4,n-1\}$ and different sizes $n$ of the tournament.
The horizontal and vertical axes correspond to the probability $p$ and the percentage, respectively. Averages are taken over 10000 generated tournaments.}
\label{fig:condorcet}
\end{figure}
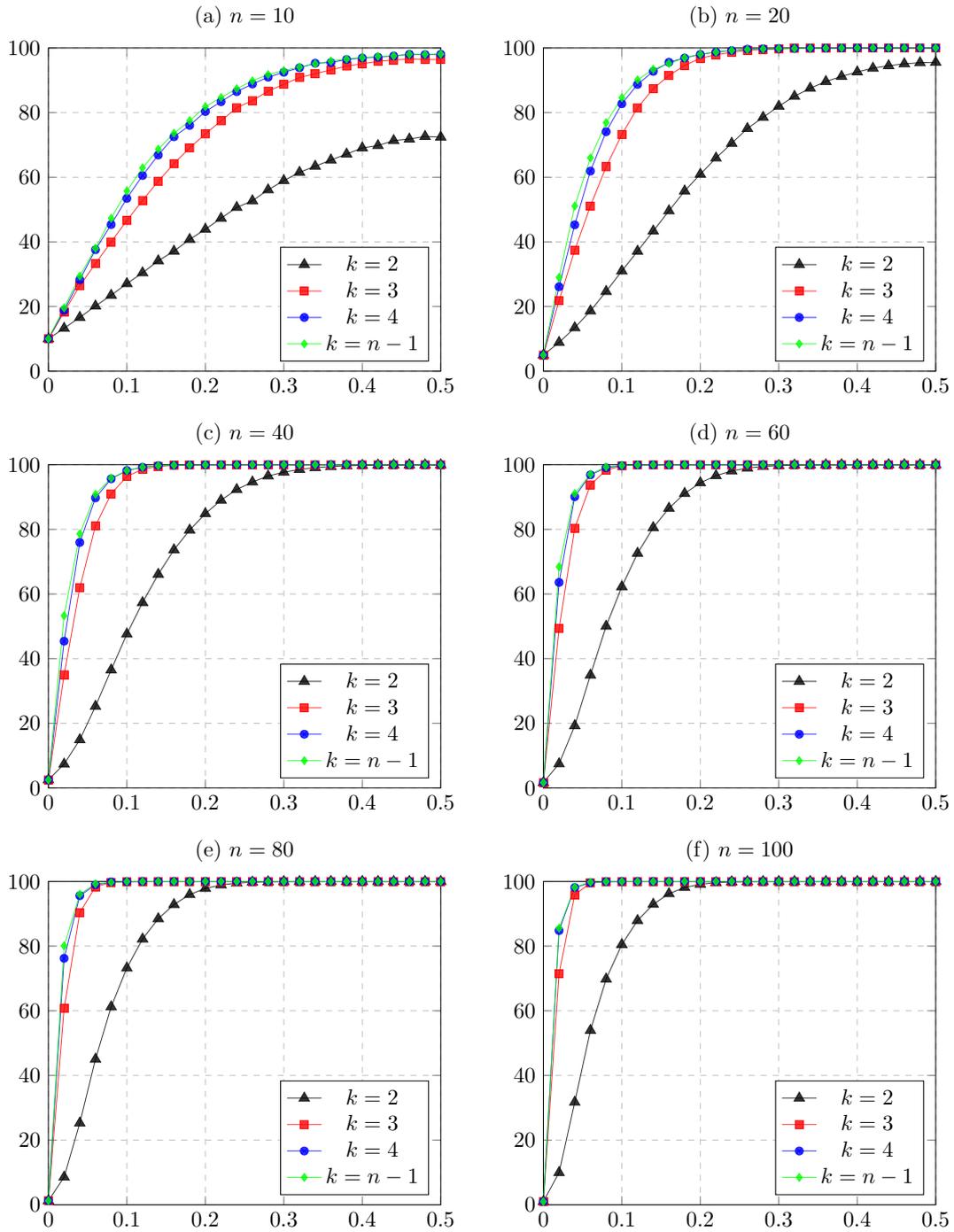

From the resulting graphs, it is clear that $3$-, $4$-, and $(n-1)$-kings behave much more similarly to one another than to $2$-kings, as our theoretical findings predict.
Note that $p=0$ always gives rise to a transitive tournament, which has only one $k$-king for every $k$, so the percentage of $k$-kings is always $(100/n)\%$ in this case.
As $p$ increases, the tournament becomes less skewed, and the number of $k$-kings also rises as a result.
This rise is significantly faster for larger $n$, since there are more intermediate alternatives through which an alternative can reach another alternative.
Indeed, for $n=100$, even when $p$ is only $0.1$, already $80\%$ of the alternatives are $2$-kings and $100\%$ are $3$, $4$, and $(n-1)$-kings.
A perhaps surprising observation is that $4$-kings behave less similarly to $3$-kings than to $(n-1)$-king, even though Table~\ref{table:summary} appears to predict the opposite outcome.
This behavior may be due to the facts that the difference between $\Theta(\log n/n)$ and $\Theta(\log\log n/n)$ is small, so the difference in the constant factors plays a more noticeable role.

Our results in Section~\ref{sec:number-kings} show that the transition from a small number to a large number of $2$-kings occurs in the range $p\in \Theta(\log n/n)$, with the threshold $p \ge 50\log n/n$ guaranteeing that at least $90\%$ of the alternatives are $2$-kings.
For $n=100$, this percentage of $2$-kings is reached when $p = 0.14\approx 3\log 100/100$, which means that there is likely to be room for improvement in Theorem~\ref{thm:2king-many} with respect to the constant factor.
A similar improvement is also plausible for Theorem~\ref{thm:2king-few}, as $0.1\log 100/100 \approx 0.005$, whereas our experiments exhibit that even when $p = 0.02$, only $10\%$ of the alternatives are $2$-kings.

For completeness, we also provide experimental results on the fraction of tournaments in which \emph{all} alternatives are $k$-kings in Appendix~\ref{app:additional}.

\subsection{Gap Model}

Next, we consider another random model for generating tournaments called the \emph{gap model} \cite{SaileSu20}.
Like in the Condorcet random model, in the gap model there is a parameter $p\in[0,1/2]$ and an ordering $x_1,\dots,x_n$ of the alternatives.
However, instead of the probability of $x_j$ dominating $x_i$ being $p$ for all $i < j$, this probability is now $0.5 - \frac{(0.5-p)(j-i)}{n-1}$.
As a result, although the domination probability remains $p$ when $(i,j) = (1,n)$, the probability is much closer to $0.5$ when $i$ and $j$ are close to each other.
In particular, even when $p=0$, the tournament can be far from transitive.
We conducted the same experiments for the gap model as we did for the Condorcet random model, with the exception being that we chose lower values of $n$, i.e., $n\in\{5,10,20,30,40,60\}$.
The reason behind this choice is that when $n > 60$, very close to $100\%$ of the alternatives are $k$-kings for every $k$.
The results of our experiments are shown in Figure~\ref{fig:gap}.
Note that for $n=5$, even though $4$-kings and $(n-1)$-kings coincide, their corresponding curves slightly differ because the tournaments are generated anew for each $k$.

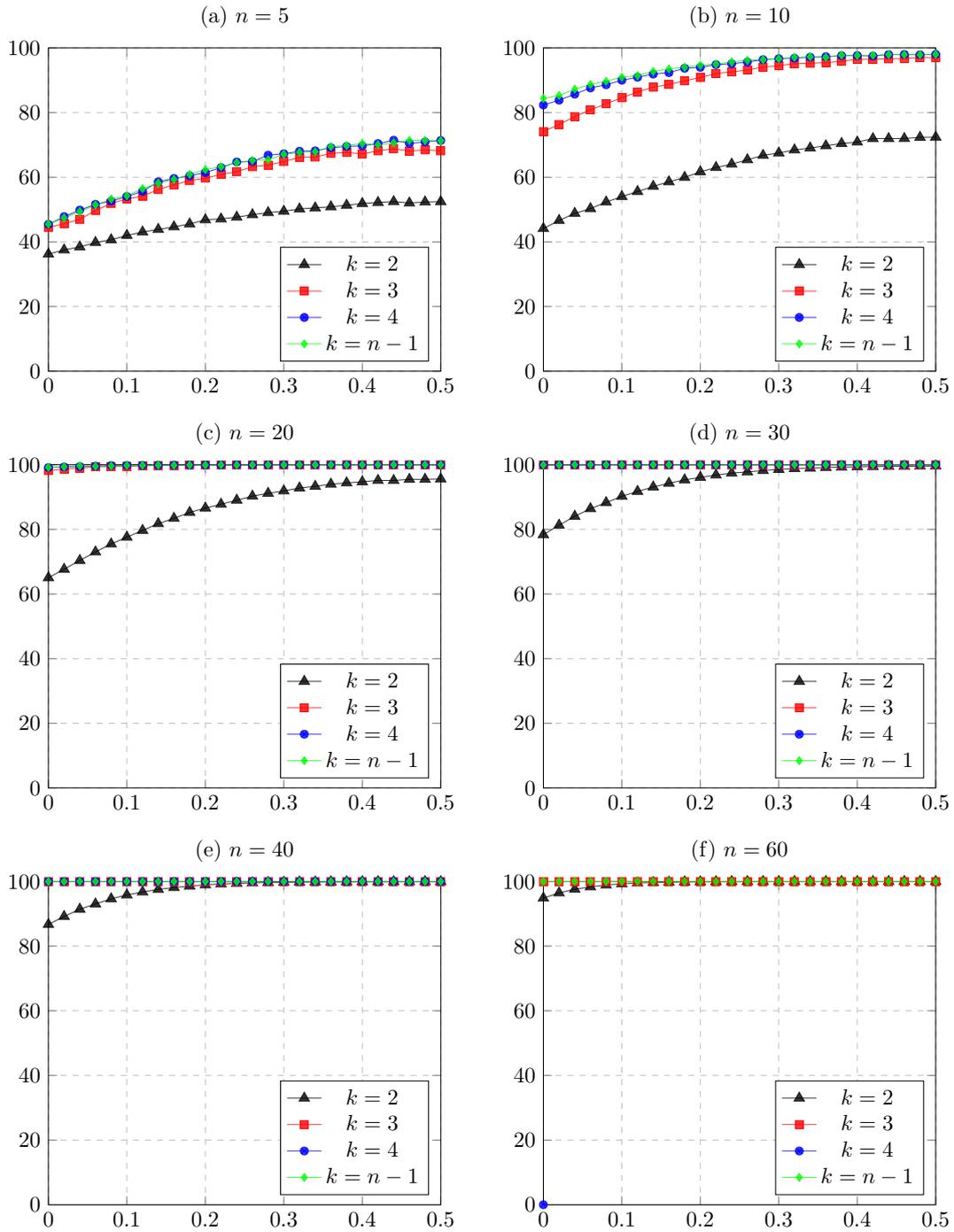
\begin{figure}
	\centering
	\begin{subfigure}{.5\textwidth} 
		\centering
\begin{tikzpicture}[scale=0.85]
\begin{axis}[
    title={(a) $n = 5$},
    xlabel={},
    ylabel={},
    xmin=0, xmax=0.5,
    ymin=0, ymax=100,
    xtick= {0,0.1,0.2,0.3,0.4,0.5},
    ytick={0,20,40,60,80,100},
    legend pos=south east,
    ymajorgrids=true,
		xmajorgrids=true,
    grid style=dashed,
]

\addplot[
    color=black,
	  mark size=3,
    mark=triangle*,
  	fill opacity=0.8,
  	draw opacity=0.8,
    ]
    coordinates {
    (0.0,36.226)(0.02,37.516)(0.04,38.438)(0.06,39.808)(0.08,40.662)(0.1,41.998)(0.12,43.056)(0.14,43.85)(0.16,44.658)(0.18,45.540000000000006)(0.2,46.833999999999996)(0.22,47.098)(0.24,47.653999999999996)(0.26,48.426)(0.28,49.05)(0.3,49.501999999999995)(0.32,50.182)(0.34,50.523999999999994)(0.36,50.86)(0.38,51.342)(0.4,51.874)(0.42,52.2)(0.44,52.474000000000004)(0.46,52.028)(0.48,52.338)(0.5,52.455999999999996)
    };
    \addlegendentry{$k=2$}

\addplot[
	    color=red,
	    mark=square*,
			fill opacity=0.8,
  	  draw opacity=0.8,
	    ]
	    coordinates {
	    (0.0,44.428000000000004)(0.02,45.67)(0.04,46.964)(0.06,49.744)(0.08,51.894000000000005)(0.1,53.312)(0.12,54.15)(0.14,56.198)(0.16,57.698)(0.18,59.004000000000005)(0.2,59.722)(0.22,60.891999999999996)(0.24,61.714)(0.26,63.251999999999995)(0.28,63.742)(0.3,64.868)(0.32,66.158)(0.34,66.23400000000001)(0.36,67.388)(0.38,67.684)(0.4,67.232)(0.42,68.16799999999999)(0.44,68.83200000000001)(0.46,68.09)(0.48,68.462)(0.5,68.15400000000001)
	    };
	    \addlegendentry{$k=3$}

\addplot[
    color=blue,
    mark=otimes*,
		fill opacity=0.8,
  	draw opacity=0.8,
    ]
    coordinates {
    (0.0,45.432)(0.02,47.832)(0.04,49.85000000000001)(0.06,51.61800000000001)(0.08,52.612)(0.1,54.04200000000001)(0.12,55.666)(0.14,58.604000000000006)(0.16,59.626)(0.18,60.45)(0.2,61.467999999999996)(0.22,62.977999999999994)(0.24,64.72200000000001)(0.26,64.87)(0.28,66.824)(0.3,67.262)(0.32,68.048)(0.34,68.16799999999999)(0.36,69.136)(0.38,69.526)(0.4,69.712)(0.42,70.418)(0.44,71.44800000000001)(0.46,70.468)(0.48,70.924)(0.5,71.372)
    };
    \addlegendentry{$k=4$}
\addplot[
    color=green,
    mark=diamond*,
		fill opacity=0.8,
  	draw opacity=0.8,
    ]
    coordinates {
    (0.0,45.498000000000005)(0.02,47.42)(0.04,49.382)(0.06,51.402)(0.08,53.246)(0.1,54.272000000000006)(0.12,56.492000000000004)(0.14,58.136)(0.16,59.302)(0.18,60.903999999999996)(0.2,62.366)(0.22,63.284000000000006)(0.24,64.512)(0.26,65.208)(0.28,65.44800000000001)(0.3,67.074)(0.32,67.684)(0.34,67.858)(0.36,69.486)(0.38,69.79400000000001)(0.4,70.45)(0.42,70.02799999999999)(0.44,70.66)(0.46,71.406)(0.48,71.4)(0.5,71.25800000000001)
    };
    \addlegendentry{$k=n-1$}
\end{axis}
\end{tikzpicture}
\end{subfigure}\begin{subfigure}{.5\textwidth} 
	\centering
\begin{tikzpicture}[scale=0.85]
\begin{axis}[
    title={(b) $n = 10$},
    xlabel={},
    ylabel={},
    xmin=0, xmax=0.5,
    ymin=0, ymax=100,
    xtick= {0,0.1,0.2,0.3,0.4,0.5},
    ytick={0,20,40,60,80,100},
    legend pos=south east,
    ymajorgrids=true,
		xmajorgrids=true,
    grid style=dashed,
]

\addplot[
    color=black,
    mark size=3,
    mark=triangle*,
  	fill opacity=0.8,
  	draw opacity=0.8,
    ]
    coordinates {
    (0.0,44.134)(0.02,46.626000000000005)(0.04,48.81400000000001)(0.06,50.29899999999999)(0.08,52.33200000000001)(0.1,54.028999999999996)(0.12,55.573)(0.14,57.194)(0.16,58.577)(0.18,59.959)(0.2,61.681)(0.22,63.01800000000001)(0.24,64.057)(0.26,65.413)(0.28,66.778)(0.3,67.50800000000001)(0.32,68.44500000000001)(0.34,69.044)(0.36,69.74199999999999)(0.38,70.439)(0.4,70.894)(0.42,71.953)(0.44,71.95700000000001)(0.46,71.98299999999999)(0.48,72.35)(0.5,72.393)
    };
    \addlegendentry{$k=2$}

\addplot[
	    color=red,
        mark=square*,
        fill opacity=0.8,
        draw opacity=0.8,
	    ]
	    coordinates {
	    (0.0,74.138)(0.02,76.328)(0.04,78.747)(0.06,80.854)(0.08,82.729)(0.1,84.60100000000001)(0.12,86.372)(0.14,87.854)(0.16,88.703)(0.18,89.811)(0.2,90.83000000000001)(0.22,92.093)(0.24,92.578)(0.26,93.164)(0.28,94.04400000000001)(0.3,94.526)(0.32,95.11100000000002)(0.34,95.24699999999999)(0.36,95.44800000000001)(0.38,95.854)(0.4,96.467)(0.42,96.478)(0.44,96.614)(0.46,96.70700000000001)(0.48,97.03799999999998)(0.5,96.953)
    	   };
	    \addlegendentry{$k=3$}

\addplot[
    color=blue,
    mark=otimes*,
    fill opacity=0.8,
  	draw opacity=0.8,
    ]
    coordinates {
     (0.0,82.333)(0.02,83.841)(0.04,85.696)(0.06,87.619)(0.08,88.59700000000001)(0.1,89.999)(0.12,90.91900000000001)(0.14,91.90200000000002)(0.16,92.365)(0.18,93.71000000000001)(0.2,94.002)(0.22,94.83599999999998)(0.24,95.105)(0.26,95.624)(0.28,96.37100000000001)(0.3,96.661)(0.32,96.821)(0.34,97.121)(0.36,97.29499999999999)(0.38,97.598)(0.4,97.626)(0.42,97.479)(0.44,97.949)(0.46,97.99699999999999)(0.48,97.88500000000002)(0.5,98.018)
    };
    \addlegendentry{$k=4$}
\addplot[
    color=green,
    mark=diamond*,
		fill opacity=0.8,
  	draw opacity=0.8,
    ]
    coordinates {
    (0.0,84.397)(0.02,85.30099999999999)(0.04,87.27399999999999)(0.06,88.708)(0.08,89.691)(0.1,90.96000000000001)(0.12,91.449)(0.14,92.72200000000001)(0.16,93.49)(0.18,94.06800000000001)(0.2,94.651)(0.22,95.001)(0.24,95.662)(0.26,96.184)(0.28,96.37700000000001)(0.3,96.63399999999999)(0.32,97.10900000000001)(0.34,97.268)(0.36,97.08500000000001)(0.38,97.777)(0.4,97.65700000000001)(0.42,97.718)(0.44,97.941)(0.46,97.90200000000002)(0.48,97.886)(0.5,98.006)
    };
    \addlegendentry{$k=n-1$}
\end{axis}
\end{tikzpicture}
\end{subfigure}

\vspace{2mm}

	\centering
	\begin{subfigure}{.5\textwidth} 
		\centering
\begin{tikzpicture}[scale=0.85]
\begin{axis}[
    title={(c) $n = 20$},
    xlabel={},
    ylabel={},
    xmin=0, xmax=0.5,
    ymin=0, ymax=100,
    xtick= {0,0.1,0.2,0.3,0.4,0.5},
    ytick={0,20,40,60,80,100},
    legend pos=south east,
    ymajorgrids=true,
		xmajorgrids=true,
    grid style=dashed,
]

\addplot[
    color=black,
	  mark size=3,
    mark=triangle*,
  	fill opacity=0.8,
  	draw opacity=0.8,
    ]
    coordinates {
    (0.0,65.0395)(0.02,67.6645)(0.04,70.3625)(0.06,73.0095)(0.08,75.4795)(0.1,77.59450000000001)(0.12,79.67699999999999)(0.14,81.76650000000001)(0.16,83.4195)(0.18,85.2535)(0.2,86.587)(0.22,87.7775)(0.24,89.03299999999999)(0.26,90.2755)(0.28,91.16300000000001)(0.3,91.89450000000001)(0.32,92.8065)(0.34,93.318)(0.36,93.988)(0.38,94.373)(0.4,94.712)(0.42,95.15700000000001)(0.44,95.1365)(0.46,95.47)(0.48,95.4865)(0.5,95.59900000000002)
    };
    \addlegendentry{$k=2$}

\addplot[
	    color=red,
        mark=square*,
        fill opacity=0.8,
        draw opacity=0.8,
	    ]
	    coordinates {
    (0.0,98.183)(0.02,98.69300000000001)(0.04,98.9225)(0.06,99.351)(0.08,99.422)(0.1,99.50649999999999)(0.12,99.67599999999999)(0.14,99.761)(0.16,99.76950000000001)(0.18,99.85849999999999)(0.2,99.878)(0.22,99.926)(0.24,99.91099999999999)(0.26,99.9505)(0.28,99.954)(0.3,99.9765)(0.32,99.96900000000001)(0.34,99.987)(0.36,99.9995)(0.38,99.9995)(0.4,99.989)(0.42,99.98849999999999)(0.44,100.0)(0.46,99.99000000000001)(0.48,99.9895)(0.5,99.98849999999999)
	    };
	    \addlegendentry{$k=3$}

\addplot[
    color=blue,
    mark=otimes*,
    fill opacity=0.8,
  	draw opacity=0.8,
    ]
    coordinates {
    (0.0,99.123)(0.02,99.319)(0.04,99.4275)(0.06,99.549)(0.08,99.7495)(0.1,99.77650000000001)(0.12,99.79150000000001)(0.14,99.82950000000001)(0.16,99.87899999999999)(0.18,99.88950000000001)(0.2,99.92150000000001)(0.22,99.9025)(0.24,99.9675)(0.26,99.94949999999999)(0.28,99.95150000000001)(0.3,99.9795)(0.32,99.99000000000001)(0.34,99.97149999999999)(0.36,99.99050000000001)(0.38,99.9985)(0.4,99.98049999999999)(0.42,99.98100000000001)(0.44,100.0)(0.46,100.0)(0.48,100.0)(0.5,99.9895)
    };
    \addlegendentry{$k=4$}
\addplot[
    color=green,
    mark=diamond*,
		fill opacity=0.8,
  	draw opacity=0.8,
    ]
    coordinates {
    (0.0,99.20450000000001)(0.02,99.36299999999999)(0.04,99.681)(0.06,99.511)(0.08,99.636)(0.1,99.77000000000001)(0.12,99.8185)(0.14,99.88950000000001)(0.16,99.84049999999999)(0.18,99.93100000000001)(0.2,99.922)(0.22,99.958)(0.24,99.93)(0.26,99.9605)(0.28,99.96950000000001)(0.3,99.97049999999999)(0.32,99.999)(0.34,99.97149999999999)(0.36,99.99050000000001)(0.38,99.99000000000001)(0.4,100.0)(0.42,100.0)(0.44,100.0)(0.46,99.9985)(0.48,99.99050000000001)(0.5,99.9895)
    };
    \addlegendentry{$k=n-1$}
\end{axis}
\end{tikzpicture}
\end{subfigure}\begin{subfigure}{.5\textwidth} 
	\centering
\begin{tikzpicture}[scale=0.85]
\begin{axis}[
    title={(d) $n = 30$},
    xlabel={},
    ylabel={},
    xmin=0, xmax=0.5,
    ymin=0, ymax=100,
    xtick= {0,0.1,0.2,0.3,0.4,0.5},
    ytick={0,20,40,60,80,100},
    legend pos=south east,
    ymajorgrids=true,
	  xmajorgrids=true,
    grid style=dashed,
]

\addplot[
    color=black,
    mark size=3,
    mark=triangle*,
  	fill opacity=0.8,
  	draw opacity=0.8,
    ]
    coordinates {
    (0.0,78.36733333333333)(0.02,81.28733333333334)(0.04,84.05466666666666)(0.06,86.40566666666668)(0.08,88.27433333333333)(0.1,90.23966666666666)(0.12,91.75133333333333)(0.14,93.10733333333334)(0.16,94.26766666666667)(0.18,95.31433333333332)(0.2,96.09533333333334)(0.22,96.81633333333333)(0.24,97.40166666666667)(0.26,97.784)(0.28,98.19633333333333)(0.3,98.54166666666667)(0.32,98.80933333333334)(0.34,98.93766666666667)(0.36,99.14766666666667)(0.38,99.25499999999998)(0.4,99.36999999999999)(0.42,99.42533333333334)(0.44,99.49066666666667)(0.46,99.50933333333333)(0.48,99.566)(0.5,99.587)
    };
    \addlegendentry{$k=2$}

\addplot[
	    color=red,
        mark=square*,
        fill opacity=0.8,
        draw opacity=0.8,
	    ]
	    coordinates {
		(0.0,99.92699999999999)(0.02,99.943)(0.04,99.974)(0.06,99.96566666666666)(0.08,99.97599999999998)(0.1,99.99666666666667)(0.12,99.99)(0.14,99.989)(0.16,100.0)(0.18,99.99933333333334)(0.2,100.0)(0.22,99.99966666666667)(0.24,100.0)(0.26,100.0)(0.28,99.99033333333334)(0.3,100.0)(0.32,100.0)(0.34,100.0)(0.36,100.0)(0.38,100.0)(0.4,100.0)(0.42,100.0)(0.44,100.0)(0.46,100.0)(0.48,100.0)(0.5,100.0)
		};
	    \addlegendentry{$k=3$}

\addplot[
    color=blue,
    mark=otimes*,
    fill opacity=0.8,
  	draw opacity=0.8,
    ]
    coordinates {
    (0.0,99.96000000000001)(0.02,99.95066666666666)(0.04,99.96000000000001)(0.06,99.97966666666666)(0.08,99.98066666666666)(0.1,99.99033333333334)(0.12,99.98066666666666)(0.14,99.99033333333334)(0.16,99.99966666666667)(0.18,100.0)(0.2,100.0)(0.22,100.0)(0.24,100.0)(0.26,100.0)(0.28,100.0)(0.3,100.0)(0.32,100.0)(0.34,100.0)(0.36,100.0)(0.38,100.0)(0.4,100.0)(0.42,100.0)(0.44,100.0)(0.46,100.0)(0.48,100.0)(0.5,100.0)
    };
    \addlegendentry{$k=4$}
\addplot[
    color=green,
    mark=diamond*,
		fill opacity=0.8,
  	draw opacity=0.8,
    ]
    coordinates {
    (0.0,99.96000000000001)(0.02,99.95166666666665)(0.04,99.98066666666666)(0.06,100.0)(0.08,99.98966666666666)(0.1,99.99)(0.12,99.98066666666666)(0.14,100.0)(0.16,99.99966666666667)(0.18,100.0)(0.2,100.0)(0.22,100.0)(0.24,100.0)(0.26,100.0)(0.28,100.0)(0.3,100.0)(0.32,100.0)(0.34,100.0)(0.36,100.0)(0.38,100.0)(0.4,100.0)(0.42,100.0)(0.44,100.0)(0.46,100.0)(0.48,100.0)(0.5,100.0)
    };
    \addlegendentry{$k=n-1$}
\end{axis}
\end{tikzpicture}
\end{subfigure}

\vspace{2mm}

	\begin{subfigure}{.5\textwidth} 
		\centering
\begin{tikzpicture}[scale=0.85]
\begin{axis}[
    title={(e) $n = 40$},
    xlabel={},
    ylabel={},
    xmin=0, xmax=0.5,
    ymin=0, ymax=100,
    xtick= {0,0.1,0.2,0.3,0.4,0.5},
    ytick={0,20,40,60,80,100},
    legend pos=south east,
    ymajorgrids=true,
		xmajorgrids=true,
    grid style=dashed,
]

\addplot[
    color=black,
    mark size=3,
    mark=triangle*,
  	fill opacity=0.8,
  	draw opacity=0.8,
    ]
    coordinates {
    (0.0,86.72025)(0.02,89.17350000000002)(0.04,91.423)(0.06,93.05325)(0.08,94.6505)(0.1,95.841)(0.12,96.74699999999999)(0.14,97.55775)(0.16,98.1165)(0.18,98.58399999999999)(0.2,98.94800000000001)(0.22,99.23325)(0.24,99.42399999999999)(0.26,99.58475)(0.28,99.68575)(0.3,99.76875)(0.32,99.83174999999999)(0.34,99.86475)(0.36,99.89375000000001)(0.38,99.91625000000002)(0.4,99.93775)(0.42,99.95075)(0.44,99.955)(0.46,99.962)(0.48,99.96525)(0.5,99.96600000000001)
    };
    \addlegendentry{$k=2$}

\addplot[
        color=red,
        mark=square*,
        fill opacity=0.8,
        draw opacity=0.8,
	    ]
	    coordinates {
    (0.0,99.99924999999999)(0.02,99.99875)(0.04,100.0)(0.06,100.0)(0.08,100.0)(0.1,100.0)(0.12,100.0)(0.14,100.0)(0.16,100.0)(0.18,100.0)(0.2,100.0)(0.22,100.0)(0.24,100.0)(0.26,100.0)(0.28,100.0)(0.3,100.0)(0.32,100.0)(0.34,100.0)(0.36,100.0)(0.38,100.0)(0.4,100.0)(0.42,100.0)(0.44,100.0)(0.46,100.0)(0.48,100.0)(0.5,100.0)	
		};
	    \addlegendentry{$k=3$}

\addplot[
    color=blue,
    mark=otimes*,
    fill opacity=0.8,
  	draw opacity=0.8,
    ]
    coordinates {
    (0.0,100.0)(0.02,100.0)(0.04,100.0)(0.06,100.0)(0.08,100.0)(0.1,100.0)(0.12,100.0)(0.14,100.0)(0.16,100.0)(0.18,100.0)(0.2,100.0)(0.22,100.0)(0.24,100.0)(0.26,100.0)(0.28,100.0)(0.3,100.0)(0.32,100.0)(0.34,100.0)(0.36,100.0)(0.38,100.0)(0.4,100.0)(0.42,100.0)(0.44,100.0)(0.46,100.0)(0.48,100.0)(0.5,100.0)
    };
    \addlegendentry{$k=4$}
\addplot[
    color=green,
    mark=diamond*,
		fill opacity=0.8,
  	draw opacity=0.8,
    ]
    coordinates {
    (0.0,100.0)(0.02,100.0)(0.04,100.0)(0.06,100.0)(0.08,100.0)(0.1,100.0)(0.12,100.0)(0.14,100.0)(0.16,100.0)(0.18,100.0)(0.2,100.0)(0.22,100.0)(0.24,100.0)(0.26,100.0)(0.28,100.0)(0.3,100.0)(0.32,100.0)(0.34,100.0)(0.36,100.0)(0.38,100.0)(0.4,100.0)(0.42,100.0)(0.44,100.0)(0.46,100.0)(0.48,100.0)(0.5,100.0)
    };
    \addlegendentry{$k=n-1$}
\end{axis}
\end{tikzpicture}
\end{subfigure}\begin{subfigure}{.5\textwidth} 
	\centering
\begin{tikzpicture}[scale=0.85]
\begin{axis}[
    title={(f) $n = 60$},
    xlabel={},
    ylabel={},
    xmin=0, xmax=0.5,
    ymin=0, ymax=100,
    xtick= {0,0.1,0.2,0.3,0.4,0.5},
    ytick={0,20,40,60,80,100},
    legend pos=south east,
    ymajorgrids=true,
		xmajorgrids=true,
    grid style=dashed,
]

\addplot[
    color=black,
    mark size=3,
    mark=triangle*,
  	fill opacity=0.8,
  	draw opacity=0.8,
    ]
    coordinates {
    (0.0,94.86783333333332)(0.02,96.4665)(0.04,97.59750000000001)(0.06,98.34733333333334)(0.08,98.91066666666667)(0.1,99.28483333333334)(0.12,99.54466666666666)(0.14,99.69900000000001)(0.16,99.80016666666667)(0.18,99.8835)(0.2,99.92783333333334)(0.22,99.95666666666666)(0.24,99.97366666666667)(0.26,99.98366666666668)(0.28,99.99216666666666)(0.3,99.99316666666667)(0.32,99.99499999999999)(0.34,99.99733333333332)(0.36,99.99900000000001)(0.38,99.99833333333335)(0.4,99.99916666666667)(0.42,99.99949999999998)(0.44,99.99983333333333)(0.46,99.99983333333333)(0.48,100.0)(0.5,99.99983333333333)
    };
    \addlegendentry{$k=2$}

\addplot[
        color=red,
        mark=square*,
        fill opacity=0.8,
        draw opacity=0.8,
	    ]
	    coordinates {
    (0.0,100.0)(0.02,100.0)(0.04,100.0)(0.06,100.0)(0.08,100.0)(0.1,100.0)(0.12,100.0)(0.14,100.0)(0.16,100.0)(0.18,100.0)(0.2,100.0)(0.22,100.0)(0.24,100.0)(0.26,100.0)(0.28,100.0)(0.3,100.0)(0.32,100.0)(0.34,100.0)(0.36,100.0)(0.38,100.0)(0.4,100.0)(0.42,100.0)(0.44,100.0)(0.46,100.0)(0.48,100.0)(0.5,100.0)	
		};
	    \addlegendentry{$k=3$}

\addplot[
    color=blue,
    mark=otimes*,
    fill opacity=0.8,
  	draw opacity=0.8,
    ]
    coordinates {
    (0,0)	
    };
    \addlegendentry{$k=4$}
\addplot[
    color=green,
    mark=diamond*,
		fill opacity=0.8,
  	draw opacity=0.8,
    ]
    coordinates {
    (0.0,100.0)(0.02,100.0)(0.04,100.0)(0.06,100.0)(0.08,100.0)(0.1,100.0)(0.12,100.0)(0.14,100.0)(0.16,100.0)(0.18,100.0)(0.2,100.0)(0.22,100.0)(0.24,100.0)(0.26,100.0)(0.28,100.0)(0.3,100.0)(0.32,100.0)(0.34,100.0)(0.36,100.0)(0.38,100.0)(0.4,100.0)(0.42,100.0)(0.44,100.0)(0.46,100.0)(0.48,100.0)(0.5,100.0)
    };
    \addlegendentry{$k=n-1$}
\end{axis}
\end{tikzpicture}
\end{subfigure}
\caption{Percentage of alternatives that are $k$-kings in tournaments generated by the gap model with probability $p$, for $k\in\{2,3,4,n-1\}$ and different sizes $n$ of the tournament.
The horizontal and vertical axes correspond to the probability $p$ and the percentage, respectively. Averages are taken over 10000 generated tournaments.}
\label{fig:gap}
\end{figure}

Similarly to the Condorcet random model, our results for the gap model reveal a clear separation between $2$-kings and $k$-kings for $k\ge 3$.
Moreover, the average number of $k$-kings is much higher in the gap model (especially for low values of $p$), which indicates that the proportion of $k$-kings in real-life tournaments is likely to be higher than in tournaments generated according to the Condorcet random model.
We remark that even when $p = 0$, the experimental findings suggest that the percentage of $k$-kings converges to $100\%$ for every $k$ as $n$ grows.
This result has \emph{not} been established theoretically by our work (or any prior work), because the probability that alternative $x_n$ dominates alternative $x_1$ is $0$ when $p = 0$, so the results on the generalized random model do not apply.
Establishing this result and studying the gap model theoretically is an interesting direction for future work.

As with the Condorcet random model, we present results on the fraction of tournaments in which all alternatives are $k$-kings in Appendix~\ref{app:additional}.

\section{Concluding Remarks}

In this paper, we have extensively investigated the behavior of generalized kings and single-elimination winners in random tournaments in view of their discriminative power.
Our results reveal surprisingly clear distinctions between $2$-kings and $k$-kings for $k\geq 3$, and illustrate why manipulating a single-elimination tournament is often possible in practice despite the problem being NP-hard.
All of the bounds that we obtained are asymptotically tight except for the bound for $5$-kings in the generalized random model (Theorem~\ref{thm:5king-positive-generalized}); one could try to close this gap.

An exciting future direction in our view is to study $k$-kings and single-elimination winners with respect to axiomatic and computational properties, as is commonly done for other tournament solutions \cite{BrandtBrHa16,Laslier97}.
For example, the set of $2$-kings (i.e., the uncovered set) is known to be the finest tournament solution satisfying Condorcet consistency, neutrality, and expansion \cite{Moulin86}.
Which axioms does the set of $3$-kings satisfy, and can we characterize it by a collection of axioms?
One could also study the relationship between these tournament solutions and traditional ones---this was partially done by Kim et al.~\cite{KimSuVa17}, who showed for instance that any alternative in the Copeland set or the Slater set can always win a single-elimination tournament.
Another possible avenue is to extend our results to other stochastic models for tournaments---several interesting models have been studied experimentally by Brandt and Seedig~\cite{BrandtSe16} and Brill et al.~\cite{BrillScSu22}.
Such questions illustrate the richness of tournaments and probabilistic models, which we expect to give rise to further fruitful research.

\section*{Acknowledgments}

This work was partially supported by an NUS Start-up Grant.
We would like to thank the anonymous reviewers of IJCAI 2021 and JAAMAS for their valuable comments.

\bibliographystyle{abbrv}
\bibliography{main}

\begin{thebibliography}{10}

\bibitem{AzizBrFi15}
H.~Aziz, M.~Brill, F.~Fischer, P.~Harrenstein, J.~Lang, and H.~G. Seedig.
\newblock Possible and necessary winners of partial tournaments.
\newblock {\em Journal of Artificial Intelligence Research}, 54:493--534, 2015.

\bibitem{AzizGaMa18}
H.~Aziz, S.~Gaspers, S.~Mackenzie, N.~Mattei, P.~Stursberg, and T.~Walsh.
\newblock Fixing balanced knockout and double elimination tournaments.
\newblock {\em Artificial Intelligence}, 262:1--14, 2018.

\bibitem{BaumeisterHo21}
D.~Baumeister and T.~A. Hogrebe.
\newblock Complexity of scheduling and predicting round-robin tournaments.
\newblock In {\em Proceedings of the 20th International Conference on
  Autonomous Agents and Multiagent Systems (AAMAS)}, pages 178--186, 2021.

\bibitem{BrandtBrHa16}
F.~Brandt, M.~Brill, and P.~Harrenstein.
\newblock Tournament solutions.
\newblock In F.~Brandt, V.~Conitzer, U.~Endriss, J.~Lang, and A.~D. Procaccia,
  editors, {\em Handbook of Computational Social Choice}, chapter~3, pages
  57--84. Cambridge University Press, 2016.

\bibitem{BrandtBrSe11}
F.~Brandt, M.~Brill, and H.~G. Seedig.
\newblock On the fixed-parameter tractability of composition-consistent
  tournament solutions.
\newblock In {\em Proceedings of the 22nd International Joint Conference on
  Artificial Intelligence (IJCAI)}, pages 85--90, 2011.

\bibitem{BrandtBrSe18}
F.~Brandt, M.~Brill, H.~G. Seedig, and W.~Suksompong.
\newblock On the structure of stable tournament solutions.
\newblock {\em Economic Theory}, 65(2):483--507, 2018.

\bibitem{BrandtFi07}
F.~Brandt and F.~A. Fischer.
\newblock Page{R}ank as a weak tournament solution.
\newblock In {\em Proceedings of the 3rd International Workshop on Internet and
  Network Economics (WINE)}, pages 300--305, 2007.

\bibitem{BrandtSe16}
F.~Brandt and H.~G. Seedig.
\newblock On the discriminative power of tournament solutions.
\newblock In {\em Selected Papers of the International Conference on Operations
  Research, OR2014}, pages 53--58, 2016.

\bibitem{BrcanovPe10}
D.~Brcanov and V.~Petrovic.
\newblock Toppling kings in multipartite tournaments by introducing new kings.
\newblock {\em Discrete Mathematics}, 310(19):2550--2554, 2010.

\bibitem{BrillScSu22}
M.~Brill, U.~Schmidt-Kraepelin, and W.~Suksompong.
\newblock Margin of victory for tournament solutions.
\newblock {\em Artificial Intelligence}, 302:103600, 2022.

\bibitem{ChatterjeeIbTk16}
K.~Chatterjee, R.~Ibsen-Jensen, and J.~Tkadlec.
\newblock Robust draws in balanced knockout tournaments.
\newblock In {\em Proceedings of the 25th International Joint Conference on
  Artificial Intelligence (IJCAI)}, pages 172--179, 2016.

\bibitem{Dey17}
P.~Dey.
\newblock Query complexity of tournament solutions.
\newblock In {\em Proceedings of the 31st AAAI Conference on Artificial
  Intelligence (AAAI)}, pages 2992--2998, 2017.

\bibitem{Fey08}
M.~Fey.
\newblock Choosing from a large tournament.
\newblock {\em Social Choice and Welfare}, 31(2):301--309, 2008.

\bibitem{Fishburn77}
P.~C. Fishburn.
\newblock Condorcet social choice functions.
\newblock {\em SIAM Journal on Applied Mathematics}, 33(3):469--489, 1977.

\bibitem{FisherRy95}
D.~C. Fisher and J.~Ryan.
\newblock Tournament games and positive tournaments.
\newblock {\em Journal of Graph Theory}, 19(2):217--236, 1995.

\bibitem{Frank68}
O.~Frank.
\newblock Stochastic competition graphs.
\newblock {\em Review of the International Statistical Institute},
  36(3):319--326, 1968.

\bibitem{Good71}
I.~J. Good.
\newblock A note on {C}ondorcet sets.
\newblock {\em Public Choice}, 10(1):97--101, 1971.

\bibitem{GuptaRoSa18}
S.~Gupta, S.~Roy, S.~Saurabh, and M.~Zehavi.
\newblock When rigging a tournament, let greediness blind you.
\newblock In {\em Proceedings of the 27th International Joint Conference on
  Artificial Intelligence (IJCAI)}, pages 275--281, 2018.

\bibitem{GuptaRoSa18-2}
S.~Gupta, S.~Roy, S.~Saurabh, and M.~Zehavi.
\newblock Winning a tournament by any means necessary.
\newblock In {\em Proceedings of the 27th International Joint Conference on
  Artificial Intelligence (IJCAI)}, pages 282--288, 2018.

\bibitem{GuptaRoSa19}
S.~Gupta, S.~Saurabh, R.~Sridharan, and M.~Zehavi.
\newblock On succinct encodings for the tournament fixing problem.
\newblock In {\em Proceedings of the 28th International Joint Conference on
  Artificial Intelligence (IJCAI)}, pages 322--328, 2019.

\bibitem{HanVa19}
W.~Han and A.~{van Deemen}.
\newblock A refinement of the uncovered set in tournaments.
\newblock {\em Theory and Decision}, 86(1):107--121, 2019.

\bibitem{KimSuVa17}
M.~P. Kim, W.~Suksompong, and V.~{Vassilevska Williams}.
\newblock Who can win a single-elimination tournament?
\newblock {\em {SIAM} Journal on Discrete Mathematics}, 31(3):1751--1764, 2017.

\bibitem{KimVa15}
M.~P. Kim and V.~{Vassilevska Williams}.
\newblock Fixing tournaments for kings, chokers, and more.
\newblock In {\em Proceedings of the 24th International Joint Conference on
  Artificial Intelligence (IJCAI)}, pages 561--567, 2015.

\bibitem{KonickiVa19}
C.~Konicki and V.~{Vassilevska Williams}.
\newblock Bribery in balanced knockout tournaments.
\newblock In {\em Proceedings of the 18th International Conference on
  Autonomous Agents and Multiagent Systems (AAMAS)}, pages 2066--2068, 2019.

\bibitem{LaffondLaLe93}
G.~Laffond, J.-F. Laslier, and M.~{Le Breton}.
\newblock The bipartisan set of a tournament game.
\newblock {\em Games and Economic Behavior}, 5(1):182--201, 1993.

\bibitem{LaffondLaLe94}
G.~Laffond, J.-F. Laslier, and M.~{Le Breton}.
\newblock The {C}opeland measure of {C}ondorcet choice functions.
\newblock {\em Discrete Applied Mathematics}, 55(3):273--279, 1994.

\bibitem{Laslier97}
J.-F. Laslier.
\newblock {\em Tournament Solutions and Majority Voting}.
\newblock Springer-Verlag, 1997.

\bibitem{LuczakRuGr96}
T.~\L{}uczak, A.~Ruci\'{n}ski, and J.~Gruszka.
\newblock On the evolution of a random tournament.
\newblock {\em Discrete Mathematics}, 148(1--3):311--316, 1996.

\bibitem{ManurangsiSu22}
P.~Manurangsi and W.~Suksompong.
\newblock Fixing knockout tournaments with seeds.
\newblock In {\em Proceedings of the 31st International Joint Conference on
  Artificial Intelligence (IJCAI)}, 2022.
\newblock Forthcoming.

\bibitem{MatteiGoKl15}
N.~Mattei, J.~Goldsmith, A.~Klapper, and M.~Mundhenk.
\newblock On the complexity of bribery and manipulation in tournaments with
  uncertain information.
\newblock {\em Journal of Applied Logic}, 13(4):549--554, 2015.

\bibitem{MatteiWa16}
N.~Mattei and T.~Walsh.
\newblock Empirical evaluation of real world tournaments.
\newblock {\em CoRR}, abs/1608.01039, 2016.

\bibitem{Maurer80}
S.~B. Maurer.
\newblock The king chicken theorems.
\newblock {\em Mathematics Magazine}, 53(2):67--80, 1980.

\bibitem{MegiddoVi88}
N.~Megiddo and U.~Vishkin.
\newblock On finding a minimum dominating set in a tournament.
\newblock {\em Theoretical Computer Science}, 61(2--3):307--316, 1988.

\bibitem{Miller77}
N.~R. Miller.
\newblock Graph-theoretic approaches to the theory of voting.
\newblock {\em American Journal of Political Science}, 21(4):769--803, 1977.

\bibitem{Miller80}
N.~R. Miller.
\newblock A new solution set for tournaments and majority voting: Further
  graph-theoretical approaches to the theory of voting.
\newblock {\em American Journal of Political Science}, 24(1):68--96, 1980.

\bibitem{MnichShYa15}
M.~Mnich, Y.~R. Shrestha, and Y.~Yang.
\newblock When does {S}chwartz conjecture hold?
\newblock In {\em Proceedings of the 24th International Joint Conference on
  Artificial Intelligence (IJCAI)}, pages 603--609, 2015.

\bibitem{MoonMo62}
J.~W. Moon and L.~Moser.
\newblock Almost all tournaments are irreducible.
\newblock {\em Canadian Mathematical Bulletin}, 5(1):61--65, 1962.

\bibitem{Moulin86}
H.~Moulin.
\newblock Choosing from a tournament.
\newblock {\em Social Choice and Welfare}, 3(4):271--291, 1986.

\bibitem{PetrovicTh91}
V.~Petrovic and C.~Thomassen.
\newblock Kings in $k$-partite tournaments.
\newblock {\em Discrete Mathematics}, 98(3):237--238, 1991.

\bibitem{RamanujanSz17}
M.~S. Ramanujan and S.~Szeider.
\newblock Rigging nearly acyclic tournaments is fixed-parameter tractable.
\newblock In {\em Proceedings of the 31st AAAI Conference on Artificial
  Intelligence (AAAI)}, pages 3929--3935, 2017.

\bibitem{SaileSu20}
C.~Saile and W.~Suksompong.
\newblock Robust bounds on choosing from large tournaments.
\newblock {\em Social Choice and Welfare}, 54(1):87--110, 2020.

\bibitem{Schwartz72}
T.~Schwartz.
\newblock Rationality and the myth of the maximum.
\newblock {\em No\^{u}s}, 6(2):97--117, 1972.

\bibitem{ScottFe12}
A.~Scott and M.~Fey.
\newblock The minimal covering set in large tournaments.
\newblock {\em Social Choice and Welfare}, 38(1):1--9, 2012.

\bibitem{StantonVa11}
I.~Stanton and V.~{Vassilevska Williams}.
\newblock Manipulating stochastically generated single-elimination tournaments
  for nearly all players.
\newblock In {\em Proceedings of the 7th International Workshop on Internet and
  Network Economics (WINE)}, pages 326--337, 2011.

\bibitem{StantonVa11-2}
I.~Stanton and V.~{Vassilevska Williams}.
\newblock Rigging tournament brackets for weaker players.
\newblock In {\em Proceedings of the 22nd International Joint Conference on
  Artificial Intelligence (IJCAI)}, pages 357--364, 2011.

\bibitem{Suksompong16}
W.~Suksompong.
\newblock Scheduling asynchronous round-robin tournaments.
\newblock {\em Operations Research Letters}, 44(1):96--100, 2016.

\bibitem{Suksompong21}
W.~Suksompong.
\newblock Tournaments in computational social choice: Recent developments.
\newblock In {\em Proceedings of the 30th International Joint Conference on
  Artificial Intelligence (IJCAI)}, pages 4611--4618, 2021.

\bibitem{Tan06}
B.~P. Tan.
\newblock On the 3-kings and 4-kings in multipartite tournaments.
\newblock {\em Discrete Mathematics}, 306(21):2703--2710, 2006.

\bibitem{Vassilevskawilliams10}
V.~{Vassilevska Williams}.
\newblock Fixing a tournament.
\newblock In {\em Proceedings of the 24th AAAI Conference on Artificial
  Intelligence (AAAI)}, pages 895--900, 2010.

\bibitem{Vassilevskawilliams16}
V.~{Vassilevska Williams}.
\newblock Knockout tournaments.
\newblock In F.~Brandt, V.~Conitzer, U.~Endriss, J.~Lang, and A.~D. Procaccia,
  editors, {\em Handbook of Computational Social Choice}, chapter~19, pages
  453--474. Cambridge University Press, 2016.

\bibitem{VuAlSh09}
T.~Vu, A.~Altman, and Y.~Shoham.
\newblock On the complexity of schedule control problems for knockout
  tournaments.
\newblock In {\em Proceedings of the 8th International Conference on Autonomous
  Agents and Multiagent Systems (AAMAS)}, pages 225--232, 2009.

\end{thebibliography}

\appendix

\section{Additional Experiments}
\label{app:additional}

In this appendix, we present experimental results on the fraction of tournaments in which \emph{all} alternatives are $k$-kings.
This complements the results in Section~\ref{sec:experiments}, which show the average fraction of $k$-kings across tournaments.
The setup of our experiments is identical to that in Section~\ref{sec:experiments}---we used the same sets of parameters $n$, $p$, and $k$ for each of the two random models, and for each of these sets, we generated $10000$ tournaments.
The plots are shown in Figures~\ref{fig:condorcet-choose-all} and \ref{fig:gap-choose-all}.

Once again, as our theoretical results predict, $2$-kings are significantly more selective than $k$-kings for $k\ge 3$.
In fact, the distinction is even clearer here than in Section~\ref{sec:experiments}.
Indeed, for $n=20$ and $p = 0.5$ in the gap model, all alternatives are $2$-kings in only $46\%$ of the tournaments, while even when $p = 0$, this is already the case for over $84\%$ of the tournaments with respect to $3$-kings and over $98\%$ with respect to $4$-kings (Figure~\ref{fig:gap-choose-all}(c)).
It is also worth noting that the values of $p$ for which all alternatives become $k$-kings in the Condorcet random model are quite close to the analytical predictions: when $n = 100$, we have $\sqrt{\log n/n} \approx 0.215$ and $\log n/n \approx 0.046$, which closely match the transitional probabilities in Figure~\ref{fig:condorcet-choose-all}(f) for $k = 2$ and $k\in\{3,4\}$, respectively.

The final remark that we would like to make is that the average fraction of $k$-kings and the fraction of tournaments that consist exclusively of $k$-kings can sometimes differ by a substantial margin.
For instance, when $n = 100$ and $p = 0.16$ in the Condorcet random model, even though $96\%$ of the alternatives are $2$-kings on average (Figure~\ref{fig:condorcet}(f)), all alternatives are $2$-kings in only $4\%$ of the tournaments (Figure~\ref{fig:condorcet-choose-all}(f)).
This means that in a large majority of tournaments generated according to these parameters, the percentage of $2$-kings is very close to, but strictly less than, $100\%$.

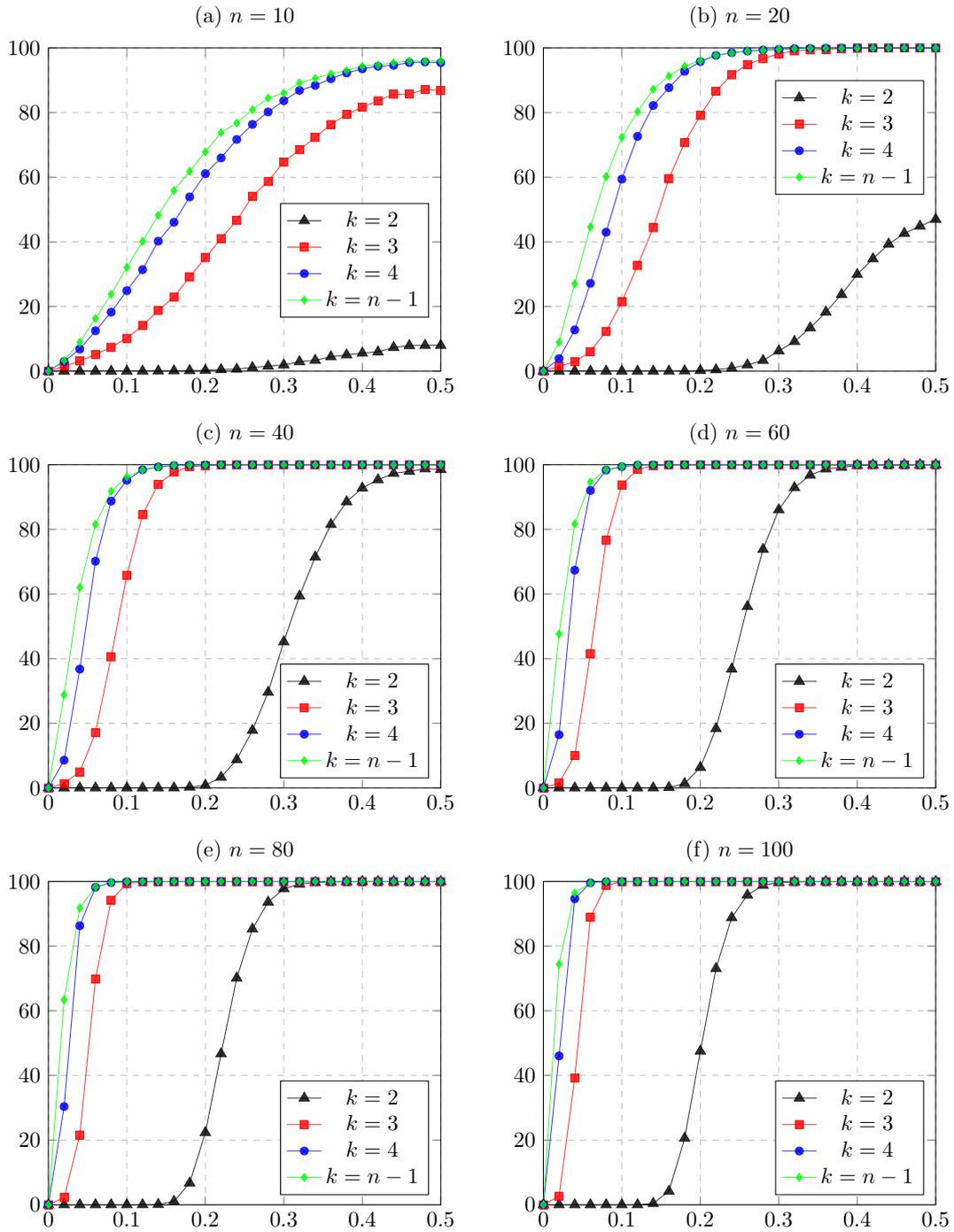
\begin{figure}
	\centering
	\begin{subfigure}{.5\textwidth} 
		\centering
\begin{tikzpicture}[scale=0.85]
\begin{axis}[
    title={(a) $n = 10$},
    xlabel={},
    ylabel={},
    xmin=0, xmax=0.5,
    ymin=0, ymax=100,
    xtick= {0,0.1,0.2,0.3,0.4,0.5},
    ytick={0,20,40,60,80,100},
    legend style={at={(0.97,0.52)}},
    ymajorgrids=true,
		xmajorgrids=true,
    grid style=dashed,
]

\addplot[
    color=black,
	  mark size=3,
    mark=triangle*,
  	fill opacity=0.8,
  	draw opacity=0.8,
    ]
    coordinates {
    (0.0,0.0)(0.02,0.0)(0.04,0.0)(0.06,0.0)(0.08,0.0)(0.1,0.0)(0.12,0.02)(0.14,0.05)(0.16,0.1)(0.18,0.16)(0.2,0.27)(0.22,0.5)(0.24,0.62)(0.26,1.1199999999999999)(0.28,1.55)(0.3,1.9300000000000002)(0.32,2.93)(0.34,3.36)(0.36,4.45)(0.38,5.029999999999999)(0.4,5.58)(0.42,5.94)(0.44,7.340000000000001)(0.46,7.91)(0.48,7.969999999999999)(0.5,7.9799999999999995)
    };
    \addlegendentry{$k=2$}

\addplot[
	    color=red,
	    mark=square*,
			fill opacity=0.8,
  	  draw opacity=0.8,
	    ]
	    coordinates {
	    (0.0,0.0)(0.02,1.47)(0.04,3.1399999999999997)(0.06,5.13)(0.08,7.430000000000001)(0.1,10.15)(0.12,14.08)(0.14,18.790000000000003)(0.16,22.95)(0.18,29.099999999999998)(0.2,35.099999999999994)(0.22,41.02)(0.24,46.63)(0.26,54.11)(0.28,58.709999999999994)(0.3,64.69)(0.32,68.51)(0.34,72.31)(0.36,76.33)(0.38,79.43)(0.4,81.67999999999999)(0.42,83.65)(0.44,85.67)(0.46,85.72)(0.48,87.13)(0.5,86.79)
	    };
	    \addlegendentry{$k=3$}

\addplot[
    color=blue,
    mark=otimes*,
		fill opacity=0.8,
  	draw opacity=0.8,
    ]
    coordinates {
    (0.0,0.0)(0.02,3.02)(0.04,6.819999999999999)(0.06,12.5)(0.08,18.25)(0.1,24.92)(0.12,31.4)(0.14,40.22)(0.16,46.07)(0.18,53.910000000000004)(0.2,61.12)(0.22,65.97)(0.24,71.69)(0.26,76.34)(0.28,80.2)(0.3,83.66)(0.32,86.86)(0.34,88.4)(0.36,90.46)(0.38,92.24)(0.4,93.52000000000001)(0.42,94.33)(0.44,94.61)(0.46,95.47)(0.48,95.65)(0.5,95.44)
    };
    \addlegendentry{$k=4$}
    
\addplot[
    color=green,
    mark=diamond*,
		fill opacity=0.8,
  	draw opacity=0.8,
    ]
    coordinates {
    (0.0,0.0)(0.02,3.29)(0.04,8.93)(0.06,16.33)(0.08,23.82)(0.1,32.09)(0.12,40.11)(0.14,48.24)(0.16,55.900000000000006)(0.18,61.82)(0.2,67.85)(0.22,73.82)(0.24,76.74)(0.26,80.89)(0.28,84.48)(0.3,86.00999999999999)(0.32,89.16)(0.34,90.56)(0.36,91.97999999999999)(0.38,92.99)(0.4,94.36)(0.42,94.74000000000001)(0.44,95.41)(0.46,95.93)(0.48,95.88)(0.5,95.92)
    };
    \addlegendentry{$k=n-1$}
\end{axis}
\end{tikzpicture}
\end{subfigure}\begin{subfigure}{.5\textwidth} 
	\centering
\begin{tikzpicture}[scale=0.85]
\begin{axis}[
    title={(b) $n = 20$},
    xlabel={},
    ylabel={},
    xmin=0, xmax=0.5,
    ymin=0, ymax=100,
    xtick= {0,0.1,0.2,0.3,0.4,0.5},
    ytick={0,20,40,60,80,100},
    legend style={at={(0.97,0.9)}},
    ymajorgrids=true,
		xmajorgrids=true,
    grid style=dashed,
]

\addplot[
    color=black,
    mark size=3,
    mark=triangle*,
  	fill opacity=0.8,
  	draw opacity=0.8,
    ]
    coordinates {
    (0.0,0.0)(0.02,0.0)(0.04,0.0)(0.06,0.0)(0.08,0.0)(0.1,0.0)(0.12,0.0)(0.14,0.01)(0.16,0.02)(0.18,0.06999999999999999)(0.2,0.22)(0.22,0.44)(0.24,0.96)(0.26,1.94)(0.28,3.3300000000000005)(0.3,6.279999999999999)(0.32,9.16)(0.34,13.41)(0.36,18.29)(0.38,23.74)(0.4,29.93)(0.42,34.77)(0.44,39.32)(0.46,42.65)(0.48,44.87)(0.5,46.989999999999995)
    };
    \addlegendentry{$k=2$}

\addplot[
	    color=red,
        mark=square*,
        fill opacity=0.8,
        draw opacity=0.8,
	    ]
	    coordinates {
	    (0.0,0.0)(0.02,1.5)(0.04,2.9000000000000004)(0.06,6.08)(0.08,12.33)(0.1,21.42)(0.12,32.64)(0.14,44.37)(0.16,59.63)(0.18,70.72)(0.2,79.13)(0.22,86.63)(0.24,91.64999999999999)(0.26,94.87)(0.28,96.67999999999999)(0.3,98.2)(0.32,99.06)(0.34,99.39)(0.36,99.53999999999999)(0.38,99.8)(0.4,99.86)(0.42,99.96000000000001)(0.44,99.95)(0.46,100.0)(0.48,99.98)(0.5,99.98)
    	   };
	    \addlegendentry{$k=3$}

\addplot[
    color=blue,
    mark=otimes*,
    fill opacity=0.8,
  	draw opacity=0.8,
    ]
    coordinates {
     (0.0,0.0)(0.02,3.8899999999999997)(0.04,12.770000000000001)(0.06,27.169999999999998)(0.08,42.99)(0.1,59.41)(0.12,72.65)(0.14,82.19)(0.16,87.7)(0.18,92.75)(0.2,95.77)(0.22,97.68)(0.24,98.52)(0.26,99.00999999999999)(0.28,99.36)(0.3,99.56)(0.32,99.81)(0.34,99.86)(0.36,99.92)(0.38,99.92999999999999)(0.4,99.99)(0.42,99.97)(0.44,99.98)(0.46,100.0)(0.48,99.99)(0.5,99.98)
    };
    \addlegendentry{$k=4$}
    
\addplot[
    color=green,
    mark=diamond*,
		fill opacity=0.8,
  	draw opacity=0.8,
    ]
    coordinates {
    (0.0,0.0)(0.02,8.94)(0.04,27.05)(0.06,44.61)(0.08,60.199999999999996)(0.1,72.36)(0.12,80.27)(0.14,87.22)(0.16,91.32000000000001)(0.18,94.37)(0.2,96.07)(0.22,97.72)(0.24,98.4)(0.26,99.02)(0.28,99.26)(0.3,99.53)(0.32,99.72)(0.34,99.86)(0.36,99.89)(0.38,99.89)(0.4,99.94)(0.42,99.96000000000001)(0.44,99.98)(0.46,99.99)(0.48,99.98)(0.5,99.98)
    };
    \addlegendentry{$k=n-1$}
\end{axis}
\end{tikzpicture}
\end{subfigure}

\vspace{2mm}

	\centering
	\begin{subfigure}{.5\textwidth} 
		\centering
\begin{tikzpicture}[scale=0.85]
\begin{axis}[
    title={(c) $n = 40$},
    xlabel={},
    ylabel={},
    xmin=0, xmax=0.5,
    ymin=0, ymax=100,
    xtick= {0,0.1,0.2,0.3,0.4,0.5},
    ytick={0,20,40,60,80,100},
    legend pos=south east,
    ymajorgrids=true,
		xmajorgrids=true,
    grid style=dashed,
]

\addplot[
    color=black,
	  mark size=3,
    mark=triangle*,
  	fill opacity=0.8,
  	draw opacity=0.8,
    ]
    coordinates {
    (0.0,0.0)(0.02,0.0)(0.04,0.0)(0.06,0.0)(0.08,0.0)(0.1,0.0)(0.12,0.0)(0.14,0.0)(0.16,0.02)(0.18,0.26)(0.2,0.83)(0.22,3.3000000000000003)(0.24,8.75)(0.26,17.810000000000002)(0.28,29.609999999999996)(0.3,45.22)(0.32,59.37)(0.34,71.44)(0.36,81.52000000000001)(0.38,88.53)(0.4,92.78999999999999)(0.42,95.35)(0.44,97.33000000000001)(0.46,97.97)(0.48,98.72)(0.5,98.5)
    };
    \addlegendentry{$k=2$}

\addplot[
	    color=red,
        mark=square*,
        fill opacity=0.8,
        draw opacity=0.8,
	    ]
	    coordinates {
    (0.0,0.0)(0.02,1.38)(0.04,4.96)(0.06,17.080000000000002)(0.08,40.5)(0.1,65.77)(0.12,84.53)(0.14,93.94)(0.16,97.69)(0.18,99.44)(0.2,99.7)(0.22,99.95)(0.24,99.99)(0.26,100.0)(0.28,100.0)(0.3,100.0)(0.32,100.0)(0.34,100.0)(0.36,100.0)(0.38,100.0)(0.4,100.0)(0.42,100.0)(0.44,100.0)(0.46,100.0)(0.48,100.0)(0.5,100.0)
	    };
	    \addlegendentry{$k=3$}

\addplot[
    color=blue,
    mark=otimes*,
    fill opacity=0.8,
  	draw opacity=0.8,
    ]
    coordinates {
    (0.0,0.0)(0.02,8.58)(0.04,36.76)(0.06,70.15)(0.08,88.75)(0.1,95.21)(0.12,98.44000000000001)(0.14,99.33)(0.16,99.77000000000001)(0.18,99.92)(0.2,99.97)(0.22,99.99)(0.24,100.0)(0.26,99.99)(0.28,100.0)(0.3,100.0)(0.32,100.0)(0.34,100.0)(0.36,100.0)(0.38,100.0)(0.4,100.0)(0.42,100.0)(0.44,100.0)(0.46,100.0)(0.48,100.0)(0.5,100.0)
    };
    \addlegendentry{$k=4$}
\addplot[
    color=green,
    mark=diamond*,
		fill opacity=0.8,
  	draw opacity=0.8,
    ]
    coordinates {
   (0.0,0.0)(0.02,28.82)(0.04,61.980000000000004)(0.06,81.54)(0.08,91.77)(0.1,96.26)(0.12,98.38)(0.14,99.24)(0.16,99.78)(0.18,99.9)(0.2,99.92999999999999)(0.22,100.0)(0.24,100.0)(0.26,100.0)(0.28,100.0)(0.3,100.0)(0.32,100.0)(0.34,100.0)(0.36,100.0)(0.38,100.0)(0.4,100.0)(0.42,100.0)(0.44,100.0)(0.46,100.0)(0.48,100.0)(0.5,100.0)
    };
    \addlegendentry{$k=n-1$}
\end{axis}
\end{tikzpicture}
\end{subfigure}\begin{subfigure}{.5\textwidth} 
	\centering
\begin{tikzpicture}[scale=0.85]
\begin{axis}[
    title={(d) $n = 60$},
    xlabel={},
    ylabel={},
    xmin=0, xmax=0.5,
    ymin=0, ymax=100,
    xtick= {0,0.1,0.2,0.3,0.4,0.5},
    ytick={0,20,40,60,80,100},
    legend pos=south east,
    ymajorgrids=true,
	  xmajorgrids=true,
    grid style=dashed,
]

\addplot[
    color=black,
    mark size=3,
    mark=triangle*,
  	fill opacity=0.8,
  	draw opacity=0.8,
    ]
    coordinates {
    (0.0,0.0)(0.02,0.0)(0.04,0.0)(0.06,0.0)(0.08,0.0)(0.1,0.0)(0.12,0.0)(0.14,0.02)(0.16,0.13999999999999999)(0.18,1.3599999999999999)(0.2,6.34)(0.22,18.35)(0.24,36.78)(0.26,56.120000000000005)(0.28,73.82)(0.3,86.07000000000001)(0.32,92.92)(0.34,96.77)(0.36,98.68)(0.38,99.2)(0.4,99.78)(0.42,99.91)(0.44,99.96000000000001)(0.46,99.97)(0.48,100.0)(0.5,100.0)
    };
    \addlegendentry{$k=2$}

\addplot[
	    color=red,
        mark=square*,
        fill opacity=0.8,
        draw opacity=0.8,
	    ]
	    coordinates {
		(0.0,0.0)(0.02,1.5599999999999998)(0.04,10.03)(0.06,41.55)(0.08,76.58)(0.1,93.77)(0.12,98.57000000000001)(0.14,99.72)(0.16,99.94)(0.18,99.99)(0.2,99.99)(0.22,100.0)(0.24,100.0)(0.26,100.0)(0.28,100.0)(0.3,100.0)(0.32,100.0)(0.34,100.0)(0.36,100.0)(0.38,100.0)(0.4,100.0)(0.42,100.0)(0.44,100.0)(0.46,100.0)(0.48,100.0)(0.5,100.0)
		};
	    \addlegendentry{$k=3$}

\addplot[
    color=blue,
    mark=otimes*,
    fill opacity=0.8,
  	draw opacity=0.8,
    ]
    coordinates {
    (0.0,0.0)(0.02,16.46)(0.04,67.36)(0.06,92.02)(0.08,98.33)(0.1,99.41)(0.12,99.89)(0.14,99.96000000000001)(0.16,99.99)(0.18,100.0)(0.2,100.0)(0.22,100.0)(0.24,100.0)(0.26,100.0)(0.28,100.0)(0.3,100.0)(0.32,100.0)(0.34,100.0)(0.36,100.0)(0.38,100.0)(0.4,100.0)(0.42,100.0)(0.44,100.0)(0.46,100.0)(0.48,100.0)(0.5,100.0)
    };
    \addlegendentry{$k=4$}
\addplot[
    color=green,
    mark=diamond*,
		fill opacity=0.8,
  	draw opacity=0.8,
    ]
    coordinates {
    (0.0,0.0)(0.02,47.61)(0.04,81.67999999999999)(0.06,94.62)(0.08,98.45)(0.1,99.48)(0.12,99.9)(0.14,99.99)(0.16,99.99)(0.18,100.0)(0.2,100.0)(0.22,99.99)(0.24,100.0)(0.26,100.0)(0.28,100.0)(0.3,100.0)(0.32,100.0)(0.34,100.0)(0.36,100.0)(0.38,100.0)(0.4,100.0)(0.42,100.0)(0.44,100.0)(0.46,100.0)(0.48,100.0)(0.5,100.0)
    };
    \addlegendentry{$k=n-1$}
\end{axis}
\end{tikzpicture}
\end{subfigure}

\vspace{2mm}

	\begin{subfigure}{.5\textwidth} 
		\centering
\begin{tikzpicture}[scale=0.85]
\begin{axis}[
    title={(e) $n = 80$},
    xlabel={},
    ylabel={},
    xmin=0, xmax=0.5,
    ymin=0, ymax=100,
    xtick= {0,0.1,0.2,0.3,0.4,0.5},
    ytick={0,20,40,60,80,100},
    legend pos=south east,
    ymajorgrids=true,
		xmajorgrids=true,
    grid style=dashed,
]

\addplot[
    color=black,
    mark size=3,
    mark=triangle*,
  	fill opacity=0.8,
  	draw opacity=0.8,
    ]
    coordinates {
    (0.0,0.0)(0.02,0.0)(0.04,0.0)(0.06,0.0)(0.08,0.0)(0.1,0.0)(0.12,0.01)(0.14,0.09)(0.16,1.04)(0.18,6.72)(0.2,22.32)(0.22,46.69)(0.24,70.13000000000001)(0.26,85.3)(0.28,93.61)(0.3,97.78999999999999)(0.32,99.17)(0.34,99.69)(0.36,99.94)(0.38,99.96000000000001)(0.4,99.99)(0.42,99.99)(0.44,100.0)(0.46,100.0)(0.48,100.0)(0.5,100.0)
    };
    \addlegendentry{$k=2$}

\addplot[
        color=red,
        mark=square*,
        fill opacity=0.8,
        draw opacity=0.8,
	    ]
	    coordinates {
    (0.0,0.0)(0.02,2.19)(0.04,21.57)(0.06,69.78999999999999)(0.08,94.22)(0.1,99.41)(0.12,99.88)(0.14,100.0)(0.16,100.0)(0.18,100.0)(0.2,100.0)(0.22,100.0)(0.24,100.0)(0.26,100.0)(0.28,100.0)(0.3,100.0)(0.32,100.0)(0.34,100.0)(0.36,100.0)(0.38,100.0)(0.4,100.0)(0.42,100.0)(0.44,100.0)(0.46,100.0)(0.48,100.0)(0.5,100.0)	
		};
	    \addlegendentry{$k=3$}

\addplot[
    color=blue,
    mark=otimes*,
    fill opacity=0.8,
  	draw opacity=0.8,
    ]
    coordinates {
    (0.0,0.0)(0.02,30.37)(0.04,86.29)(0.06,98.21)(0.08,99.66000000000001)(0.1,99.95)(0.12,99.99)(0.14,99.99)(0.16,100.0)(0.18,100.0)(0.2,100.0)(0.22,100.0)(0.24,100.0)(0.26,100.0)(0.28,100.0)(0.3,100.0)(0.32,100.0)(0.34,100.0)(0.36,100.0)(0.38,100.0)(0.4,100.0)(0.42,100.0)(0.44,100.0)(0.46,100.0)(0.48,100.0)(0.5,100.0)	
    };
    \addlegendentry{$k=4$}
\addplot[
    color=green,
    mark=diamond*,
		fill opacity=0.8,
  	draw opacity=0.8,
    ]
    coordinates {
    (0.0,0.0)(0.02,63.39)(0.04,91.81)(0.06,98.34)(0.08,99.71)(0.1,99.97)(0.12,99.99)(0.14,99.99)(0.16,100.0)(0.18,100.0)(0.2,100.0)(0.22,100.0)(0.24,100.0)(0.26,100.0)(0.28,100.0)(0.3,100.0)(0.32,100.0)(0.34,100.0)(0.36,100.0)(0.38,100.0)(0.4,100.0)(0.42,100.0)(0.44,100.0)(0.46,100.0)(0.48,100.0)(0.5,100.0)
    };
    \addlegendentry{$k=n-1$}
\end{axis}
\end{tikzpicture}
\end{subfigure}\begin{subfigure}{.5\textwidth} 
	\centering
\begin{tikzpicture}[scale=0.85]
\begin{axis}[
    title={(f) $n = 100$},
    xlabel={},
    ylabel={},
    xmin=0, xmax=0.5,
    ymin=0, ymax=100,
    xtick= {0,0.1,0.2,0.3,0.4,0.5},
    ytick={0,20,40,60,80,100},
    legend pos=south east,
    ymajorgrids=true,
		xmajorgrids=true,
    grid style=dashed,
]

\addplot[
    color=black,
    mark size=3,
    mark=triangle*,
  	fill opacity=0.8,
  	draw opacity=0.8,
    ]
    coordinates {
    (0.0,0.0)(0.02,0.0)(0.04,0.0)(0.06,0.0)(0.08,0.0)(0.1,0.0)(0.12,0.01)(0.14,0.38999999999999996)(0.16,4.18)(0.18,20.57)(0.2,47.5)(0.22,73.08)(0.24,88.79)(0.26,95.81)(0.28,98.83999999999999)(0.3,99.67)(0.32,99.88)(0.34,99.99)(0.36,100.0)(0.38,100.0)(0.4,100.0)(0.42,100.0)(0.44,100.0)(0.46,100.0)(0.48,100.0)(0.5,100.0)
    };
    \addlegendentry{$k=2$}

\addplot[
        color=red,
        mark=square*,
        fill opacity=0.8,
        draw opacity=0.8,
	    ]
	    coordinates {
    (0.0,0.0)(0.02,2.63)(0.04,39.15)(0.06,88.94999999999999)(0.08,98.81)(0.1,99.92)(0.12,100.0)(0.14,100.0)(0.16,100.0)(0.18,100.0)(0.2,100.0)(0.22,100.0)(0.24,100.0)(0.26,100.0)(0.28,100.0)(0.3,100.0)(0.32,100.0)(0.34,100.0)(0.36,100.0)(0.38,100.0)(0.4,100.0)(0.42,100.0)(0.44,100.0)(0.46,100.0)(0.48,100.0)(0.5,100.0)
		};
	    \addlegendentry{$k=3$}

\addplot[
    color=blue,
    mark=otimes*,
    fill opacity=0.8,
  	draw opacity=0.8,
    ]
    coordinates {
    (0.0,0.0)(0.02,46.02)(0.04,94.67)(0.06,99.59)(0.08,99.97)(0.1,100.0)(0.12,100.0)(0.14,100.0)(0.16,100.0)(0.18,100.0)(0.2,100.0)(0.22,100.0)(0.24,100.0)(0.26,100.0)(0.28,100.0)(0.3,100.0)(0.32,100.0)(0.34,100.0)(0.36,100.0)(0.38,100.0)(0.4,100.0)(0.42,100.0)(0.44,100.0)(0.46,100.0)(0.48,100.0)(0.5,100.0)	
    };
    \addlegendentry{$k=4$}
\addplot[
    color=green,
    mark=diamond*,
		fill opacity=0.8,
  	draw opacity=0.8,
    ]
    coordinates {
    (0.0,0.0)(0.02,74.42999999999999)(0.04,96.41)(0.06,99.55000000000001)(0.08,99.97)(0.1,99.99)(0.12,100.0)(0.14,100.0)(0.16,100.0)(0.18,100.0)(0.2,100.0)(0.22,100.0)(0.24,100.0)(0.26,100.0)(0.28,100.0)(0.3,100.0)(0.32,100.0)(0.34,100.0)(0.36,100.0)(0.38,100.0)(0.4,100.0)(0.42,100.0)(0.44,100.0)(0.46,100.0)(0.48,100.0)(0.5,100.0)
    };
    \addlegendentry{$k=n-1$}
\end{axis}
\end{tikzpicture}
\end{subfigure}
\caption{Percentage of tournaments generated by the Condorcet random model with probability~$p$ in which all alternatives are $k$-kings, for $k\in\{2,3,4,n-1\}$ and different sizes $n$ of the tournament.
The horizontal and vertical axes correspond to the probability $p$ and the percentage, respectively. Percentages are taken over 10000 generated tournaments.}
\label{fig:condorcet-choose-all}
\end{figure}

\begin{figure}
	\centering
	\begin{subfigure}{.5\textwidth} 
		\centering
\begin{tikzpicture}[scale=0.85]
\begin{axis}[
    title={(a) $n = 5$},
    xlabel={},
    ylabel={},
    xmin=0, xmax=0.5,
    ymin=0, ymax=100,
    xtick= {0,0.1,0.2,0.3,0.4,0.5},
    ytick={0,20,40,60,80,100},
    legend pos=north east,
    ymajorgrids=true,
		xmajorgrids=true,
    grid style=dashed,
]

\addplot[
    color=black,
	  mark size=3,
    mark=triangle*,
  	fill opacity=0.8,
  	draw opacity=0.8,
    ]
    coordinates {
    (0.0,0.89)(0.02,1.28)(0.04,1.69)(0.06,1.76)(0.08,2.02)(0.1,2.41)(0.12,2.62)(0.14,3.2399999999999998)(0.16,3.2199999999999998)(0.18,3.51)(0.2,3.74)(0.22,4.4799999999999995)(0.24,4.35)(0.26,4.72)(0.28,4.9)(0.3,5.01)(0.32,5.3100000000000005)(0.34,5.56)(0.36,5.84)(0.38,5.96)(0.4,5.87)(0.42,6.25)(0.44,6.1899999999999995)(0.46,5.96)(0.48,6.419999999999999)(0.5,6.3)
    };
    \addlegendentry{$k=2$}

\addplot[
	    color=red,
	    mark=square*,
			fill opacity=0.8,
  	  draw opacity=0.8,
	    ]
	    coordinates {
	    (0.0,12.53)(0.02,15.0)(0.04,16.520000000000003)(0.06,18.6)(0.08,21.07)(0.1,21.8)(0.12,23.68)(0.14,25.0)(0.16,27.0)(0.18,29.060000000000002)(0.2,29.82)(0.22,31.52)(0.24,33.39)(0.26,34.01)(0.28,35.3)(0.3,36.86)(0.32,37.38)(0.34,38.81)(0.36,38.32)(0.38,39.76)(0.4,40.42)(0.42,40.69)(0.44,40.510000000000005)(0.46,40.64)(0.48,41.57)(0.5,41.370000000000005)
	    };
	    \addlegendentry{$k=3$}

\addplot[
    color=blue,
    mark=otimes*,
		fill opacity=0.8,
  	draw opacity=0.8,
    ]
    coordinates {
    (0.0,19.07)(0.02,21.9)(0.04,24.12)(0.06,26.07)(0.08,29.39)(0.1,30.320000000000004)(0.12,32.14)(0.14,34.949999999999996)(0.16,36.480000000000004)(0.18,38.79)(0.2,39.79)(0.22,42.41)(0.24,43.85)(0.26,44.99)(0.28,45.61)(0.3,48.370000000000005)(0.32,48.79)(0.34,48.809999999999995)(0.36,50.129999999999995)(0.38,51.09)(0.4,52.470000000000006)(0.42,53.22)(0.44,52.51)(0.46,53.68000000000001)(0.48,53.510000000000005)(0.5,52.96999999999999)
    };
    \addlegendentry{$k=4$}
    
\addplot[
    color=green,
    mark=diamond*,
		fill opacity=0.8,
  	draw opacity=0.8,
    ]
    coordinates {
    (0.0,19.28)(0.02,22.05)(0.04,23.96)(0.06,27.21)(0.08,28.32)(0.1,30.680000000000003)(0.12,32.74)(0.14,36.21)(0.16,36.620000000000005)(0.18,38.3)(0.2,40.910000000000004)(0.22,41.77)(0.24,43.19)(0.26,44.73)(0.28,46.44)(0.3,46.400000000000006)(0.32,47.73)(0.34,49.059999999999995)(0.36,50.01)(0.38,50.79)(0.4,51.449999999999996)(0.42,51.959999999999994)(0.44,52.78)(0.46,51.89)(0.48,52.83)(0.5,52.849999999999994)
    };
    \addlegendentry{$k=n-1$}
\end{axis}
\end{tikzpicture}
\end{subfigure}\begin{subfigure}{.5\textwidth} 
	\centering
\begin{tikzpicture}[scale=0.85]
\begin{axis}[
    title={(b) $n = 10$},
    xlabel={},
    ylabel={},
    xmin=0, xmax=0.5,
    ymin=0, ymax=100,
    xtick= {0,0.1,0.2,0.3,0.4,0.5},
    ytick={0,20,40,60,80,100},
    legend style={at={(0.97,0.63)}},
    ymajorgrids=true,
		xmajorgrids=true,
    grid style=dashed,
]

\addplot[
    color=black,
    mark size=3,
    mark=triangle*,
  	fill opacity=0.8,
  	draw opacity=0.8,
    ]
    coordinates {
    (0.0,0.16999999999999998)(0.02,0.29)(0.04,0.37)(0.06,0.54)(0.08,0.7100000000000001)(0.1,0.97)(0.12,1.26)(0.14,1.5)(0.16,2.0)(0.18,2.42)(0.2,2.97)(0.22,3.17)(0.24,3.65)(0.26,4.36)(0.28,4.65)(0.3,5.42)(0.32,5.33)(0.34,5.62)(0.36,6.22)(0.38,6.35)(0.4,6.81)(0.42,7.5)(0.44,8.05)(0.46,7.539999999999999)(0.48,7.9799999999999995)(0.5,7.739999999999999)
    };
    \addlegendentry{$k=2$}

\addplot[
	    color=red,
        mark=square*,
        fill opacity=0.8,
        draw opacity=0.8,
	    ]
	    coordinates {
	    (0.0,31.540000000000003)(0.02,35.449999999999996)(0.04,40.12)(0.06,44.58)(0.08,48.36)(0.1,53.11)(0.12,56.11000000000001)(0.14,60.019999999999996)(0.16,62.63999999999999)(0.18,66.59)(0.2,68.32000000000001)(0.22,71.48)(0.24,73.89)(0.26,75.47)(0.28,77.57)(0.3,79.64)(0.32,80.7)(0.34,81.99)(0.36,83.1)(0.38,83.78999999999999)(0.4,85.41)(0.42,85.48)(0.44,85.75)(0.46,86.58)(0.48,86.74)(0.5,86.50999999999999)
    	   };
	    \addlegendentry{$k=3$}

\addplot[
    color=blue,
    mark=otimes*,
    fill opacity=0.8,
  	draw opacity=0.8,
    ]
    coordinates {
     (0.0,61.21)(0.02,65.25)(0.04,68.64)(0.06,72.57000000000001)(0.08,75.35)(0.1,77.42999999999999)(0.12,80.28)(0.14,81.91000000000001)(0.16,83.78)(0.18,85.82)(0.2,86.59)(0.22,88.36)(0.24,89.16)(0.26,89.77000000000001)(0.28,91.36999999999999)(0.3,92.58)(0.32,92.96)(0.34,93.58)(0.36,93.87)(0.38,94.81)(0.4,94.91000000000001)(0.42,95.45)(0.44,95.39)(0.46,95.56)(0.48,95.52000000000001)(0.5,95.61)
    };
    \addlegendentry{$k=4$}
    
\addplot[
    color=green,
    mark=diamond*,
		fill opacity=0.8,
  	draw opacity=0.8,
    ]
    coordinates {
    (0.0,70.64)(0.02,74.42999999999999)(0.04,77.41)(0.06,79.12)(0.08,80.88)(0.1,82.16)(0.12,84.14)(0.14,85.86)(0.16,87.3)(0.18,88.8)(0.2,89.44)(0.22,90.67)(0.24,91.49000000000001)(0.26,92.03)(0.28,92.97999999999999)(0.3,93.2)(0.32,94.02000000000001)(0.34,93.91000000000001)(0.36,95.17999999999999)(0.38,95.07)(0.4,95.77)(0.42,95.71)(0.44,95.85000000000001)(0.46,95.8)(0.48,96.02000000000001)(0.5,95.96000000000001)
    };
    \addlegendentry{$k=n-1$}
\end{axis}
\end{tikzpicture}
\end{subfigure}

\vspace{2mm}

	\centering
	\begin{subfigure}{.5\textwidth} 
		\centering
\begin{tikzpicture}[scale=0.85]
\begin{axis}[
    title={(c) $n = 20$},
    xlabel={},
    ylabel={},
    xmin=0, xmax=0.5,
    ymin=0, ymax=100,
    xtick= {0,0.1,0.2,0.3,0.4,0.5},
    ytick={0,20,40,60,80,100},
    legend style={at={(0.97,0.9)}},
    ymajorgrids=true,
		xmajorgrids=true,
    grid style=dashed,
]

\addplot[
    color=black,
	  mark size=3,
    mark=triangle*,
  	fill opacity=0.8,
  	draw opacity=0.8,
    ]
    coordinates {
    (0.0,0.13)(0.02,0.19)(0.04,0.48)(0.06,0.73)(0.08,1.3299999999999998)(0.1,2.6)(0.12,3.4000000000000004)(0.14,5.3)(0.16,7.08)(0.18,8.799999999999999)(0.2,11.0)(0.22,14.149999999999999)(0.24,17.19)(0.26,20.580000000000002)(0.28,23.45)(0.3,26.93)(0.32,30.15)(0.34,32.71)(0.36,34.78)(0.38,39.28)(0.4,41.11)(0.42,43.24)(0.44,43.730000000000004)(0.46,45.16)(0.48,46.21)(0.5,46.300000000000004)
    };
    \addlegendentry{$k=2$}

\addplot[
	    color=red,
        mark=square*,
        fill opacity=0.8,
        draw opacity=0.8,
	    ]
	    coordinates {
    (0.0,84.50999999999999)(0.02,88.88000000000001)(0.04,91.93)(0.06,93.88)(0.08,95.66)(0.1,96.89)(0.12,97.57000000000001)(0.14,98.19)(0.16,98.71)(0.18,99.21)(0.2,99.4)(0.22,99.56)(0.24,99.66000000000001)(0.26,99.77000000000001)(0.28,99.86)(0.3,99.88)(0.32,99.83)(0.34,99.92)(0.36,99.94)(0.38,99.96000000000001)(0.4,99.92999999999999)(0.42,99.98)(0.44,99.99)(0.46,99.98)(0.48,99.99)(0.5,99.97)
	    };
	    \addlegendentry{$k=3$}

\addplot[
    color=blue,
    mark=otimes*,
    fill opacity=0.8,
  	draw opacity=0.8,
    ]
    coordinates {
    (0.0,98.29)(0.02,98.72999999999999)(0.04,98.97)(0.06,99.06)(0.08,99.36)(0.1,99.44)(0.12,99.75)(0.14,99.76)(0.16,99.77000000000001)(0.18,99.8)(0.2,99.81)(0.22,99.87)(0.24,99.91)(0.26,99.92999999999999)(0.28,99.95)(0.3,99.99)(0.32,99.96000000000001)(0.34,99.97)(0.36,99.98)(0.38,100.0)(0.4,99.98)(0.42,100.0)(0.44,100.0)(0.46,99.99)(0.48,99.99)(0.5,99.98)
    };
    \addlegendentry{$k=4$}
\addplot[
    color=green,
    mark=diamond*,
		fill opacity=0.8,
  	draw opacity=0.8,
    ]
    coordinates {
    (0.0,98.61999999999999)(0.02,98.77)(0.04,99.00999999999999)(0.06,99.35000000000001)(0.08,99.31)(0.1,99.42999999999999)(0.12,99.61)(0.14,99.71)(0.16,99.77000000000001)(0.18,99.95)(0.2,99.85000000000001)(0.22,99.85000000000001)(0.24,99.92999999999999)(0.26,99.92999999999999)(0.28,99.91)(0.3,99.96000000000001)(0.32,99.96000000000001)(0.34,99.96000000000001)(0.36,99.98)(0.38,99.98)(0.4,100.0)(0.42,99.99)(0.44,99.98)(0.46,100.0)(0.48,99.99)(0.5,100.0)
    };
    \addlegendentry{$k=n-1$}
\end{axis}
\end{tikzpicture}
\end{subfigure}\begin{subfigure}{.5\textwidth} 
	\centering
\begin{tikzpicture}[scale=0.85]
\begin{axis}[
    title={(d) $n = 30$},
    xlabel={},
    ylabel={},
    xmin=0, xmax=0.5,
    ymin=0, ymax=100,
    xtick= {0,0.1,0.2,0.3,0.4,0.5},
    ytick={0,20,40,60,80,100},
    legend pos=south east,
    ymajorgrids=true,
	  xmajorgrids=true,
    grid style=dashed,
]

\addplot[
    color=black,
    mark size=3,
    mark=triangle*,
  	fill opacity=0.8,
  	draw opacity=0.8,
    ]
    coordinates {
    (0.0,0.33999999999999997)(0.02,0.65)(0.04,1.6500000000000001)(0.06,3.0)(0.08,5.2299999999999995)(0.1,8.64)(0.12,12.65)(0.14,18.16)(0.16,23.79)(0.18,30.7)(0.2,36.95)(0.22,45.34)(0.24,50.61)(0.26,56.92)(0.28,63.029999999999994)(0.3,68.56)(0.32,71.6)(0.34,75.39)(0.36,78.79)(0.38,81.43)(0.4,84.02)(0.42,85.00999999999999)(0.44,86.85000000000001)(0.46,86.96000000000001)(0.48,88.06)(0.5,88.17)
    };
    \addlegendentry{$k=2$}

\addplot[
	    color=red,
        mark=square*,
        fill opacity=0.8,
        draw opacity=0.8,
	    ]
	    coordinates {
		(0.0,98.97)(0.02,99.45)(0.04,99.62)(0.06,99.81)(0.08,99.91)(0.1,99.92999999999999)(0.12,99.95)(0.14,99.99)(0.16,99.98)(0.18,100.0)(0.2,100.0)(0.22,100.0)(0.24,100.0)(0.26,100.0)(0.28,100.0)(0.3,100.0)(0.32,100.0)(0.34,100.0)(0.36,100.0)(0.38,100.0)(0.4,100.0)(0.42,100.0)(0.44,100.0)(0.46,100.0)(0.48,100.0)(0.5,100.0)
		};
	    \addlegendentry{$k=3$}

\addplot[
    color=blue,
    mark=otimes*,
    fill opacity=0.8,
  	draw opacity=0.8,
    ]
    coordinates {
    (0.0,99.89)(0.02,99.94)(0.04,99.97)(0.06,99.94)(0.08,99.98)(0.1,100.0)(0.12,100.0)(0.14,100.0)(0.16,100.0)(0.18,100.0)(0.2,100.0)(0.22,100.0)(0.24,100.0)(0.26,100.0)(0.28,100.0)(0.3,100.0)(0.32,100.0)(0.34,100.0)(0.36,100.0)(0.38,100.0)(0.4,100.0)(0.42,100.0)(0.44,100.0)(0.46,100.0)(0.48,100.0)(0.5,100.0)
    };
    \addlegendentry{$k=4$}
\addplot[
    color=green,
    mark=diamond*,
		fill opacity=0.8,
  	draw opacity=0.8,
    ]
    coordinates {
    (0.0,99.92999999999999)(0.02,99.97)(0.04,99.97)(0.06,99.98)(0.08,100.0)(0.1,99.98)(0.12,99.96000000000001)(0.14,100.0)(0.16,100.0)(0.18,100.0)(0.2,99.99)(0.22,100.0)(0.24,100.0)(0.26,100.0)(0.28,100.0)(0.3,100.0)(0.32,100.0)(0.34,100.0)(0.36,100.0)(0.38,100.0)(0.4,100.0)(0.42,100.0)(0.44,100.0)(0.46,100.0)(0.48,100.0)(0.5,100.0)
    };
    \addlegendentry{$k=n-1$}
\end{axis}
\end{tikzpicture}
\end{subfigure}

\vspace{2mm}

	\begin{subfigure}{.5\textwidth} 
		\centering
\begin{tikzpicture}[scale=0.85]
\begin{axis}[
    title={(e) $n = 40$},
    xlabel={},
    ylabel={},
    xmin=0, xmax=0.5,
    ymin=0, ymax=100,
    xtick= {0,0.1,0.2,0.3,0.4,0.5},
    ytick={0,20,40,60,80,100},
    legend pos=south east,
    ymajorgrids=true,
		xmajorgrids=true,
    grid style=dashed,
]

\addplot[
    color=black,
    mark size=3,
    mark=triangle*,
  	fill opacity=0.8,
  	draw opacity=0.8,
    ]
    coordinates {
    (0.0,0.88)(0.02,1.97)(0.04,4.96)(0.06,9.54)(0.08,15.559999999999999)(0.1,23.21)(0.12,32.18)(0.14,41.69)(0.16,50.71)(0.18,60.67)(0.2,68.8)(0.22,74.81)(0.24,80.58)(0.26,84.87)(0.28,88.94999999999999)(0.3,91.43)(0.32,93.56)(0.34,95.12)(0.36,96.28)(0.38,96.78)(0.4,97.72999999999999)(0.42,97.84)(0.44,98.21)(0.46,98.25)(0.48,98.72)(0.5,98.45)
    };
    \addlegendentry{$k=2$}

\addplot[
        color=red,
        mark=square*,
        fill opacity=0.8,
        draw opacity=0.8,
	    ]
	    coordinates {
    (0.0,99.96000000000001)(0.02,100.0)(0.04,99.98)(0.06,99.99)(0.08,99.99)(0.1,99.99)(0.12,100.0)(0.14,100.0)(0.16,100.0)(0.18,100.0)(0.2,100.0)(0.22,100.0)(0.24,100.0)(0.26,100.0)(0.28,100.0)(0.3,100.0)(0.32,100.0)(0.34,100.0)(0.36,100.0)(0.38,100.0)(0.4,100.0)(0.42,100.0)(0.44,100.0)(0.46,100.0)(0.48,100.0)(0.5,100.0)	
		};
	    \addlegendentry{$k=3$}

\addplot[
    color=blue,
    mark=otimes*,
    fill opacity=0.8,
  	draw opacity=0.8,
    ]
    coordinates {
    (0.0,100.0)(0.02,100.0)(0.04,100.0)(0.06,100.0)(0.08,100.0)(0.1,100.0)(0.12,100.0)(0.14,100.0)(0.16,100.0)(0.18,100.0)(0.2,100.0)(0.22,100.0)(0.24,100.0)(0.26,100.0)(0.28,100.0)(0.3,100.0)(0.32,100.0)(0.34,100.0)(0.36,100.0)(0.38,100.0)(0.4,100.0)(0.42,100.0)(0.44,100.0)(0.46,100.0)(0.48,100.0)(0.5,100.0)	
    };
    \addlegendentry{$k=4$}
\addplot[
    color=green,
    mark=diamond*,
		fill opacity=0.8,
  	draw opacity=0.8,
    ]
    coordinates {
    (0.0,100.0)(0.02,100.0)(0.04,100.0)(0.06,100.0)(0.08,100.0)(0.1,100.0)(0.12,100.0)(0.14,100.0)(0.16,100.0)(0.18,100.0)(0.2,100.0)(0.22,100.0)(0.24,100.0)(0.26,100.0)(0.28,100.0)(0.3,100.0)(0.32,100.0)(0.34,100.0)(0.36,100.0)(0.38,100.0)(0.4,100.0)(0.42,100.0)(0.44,100.0)(0.46,100.0)(0.48,100.0)(0.5,100.0)
    };
    \addlegendentry{$k=n-1$}
\end{axis}
\end{tikzpicture}
\end{subfigure}\begin{subfigure}{.5\textwidth} 
	\centering
\begin{tikzpicture}[scale=0.85]
\begin{axis}[
    title={(f) $n = 60$},
    xlabel={},
    ylabel={},
    xmin=0, xmax=0.5,
    ymin=0, ymax=100,
    xtick= {0,0.1,0.2,0.3,0.4,0.5},
    ytick={0,20,40,60,80,100},
    legend pos=south east,
    ymajorgrids=true,
		xmajorgrids=true,
    grid style=dashed,
]

\addplot[
    color=black,
    mark size=3,
    mark=triangle*,
  	fill opacity=0.8,
  	draw opacity=0.8,
    ]
    coordinates {
    (0.0,5.7700000000000005)(0.02,14.81)(0.04,25.990000000000002)(0.06,40.44)(0.08,53.769999999999996)(0.1,66.36)(0.12,76.61)(0.14,84.38)(0.16,89.79)(0.18,93.31)(0.2,96.37)(0.22,97.53)(0.24,98.72999999999999)(0.26,98.82)(0.28,99.44)(0.3,99.74)(0.32,99.72)(0.34,99.78)(0.36,99.89)(0.38,99.95)(0.4,99.99)(0.42,99.92999999999999)(0.44,100.0)(0.46,99.98)(0.48,99.99)(0.5,100.0)
    };
    \addlegendentry{$k=2$}

\addplot[
        color=red,
        mark=square*,
        fill opacity=0.8,
        draw opacity=0.8,
	    ]
	    coordinates {
    (0.0,100.0)(0.02,100.0)(0.04,100.0)(0.06,100.0)(0.08,100.0)(0.1,100.0)(0.12,100.0)(0.14,100.0)(0.16,100.0)(0.18,100.0)(0.2,100.0)(0.22,100.0)(0.24,100.0)(0.26,100.0)(0.28,100.0)(0.3,100.0)(0.32,100.0)(0.34,100.0)(0.36,100.0)(0.38,100.0)(0.4,100.0)(0.42,100.0)(0.44,100.0)(0.46,100.0)(0.48,100.0)(0.5,100.0)
		};
	    \addlegendentry{$k=3$}

\addplot[
    color=blue,
    mark=otimes*,
    fill opacity=0.8,
  	draw opacity=0.8,
    ]
    coordinates {
    (0.0,100.0)(0.02,100.0)(0.04,100.0)(0.06,100.0)(0.08,100.0)(0.1,100.0)(0.12,100.0)(0.14,100.0)(0.16,100.0)(0.18,100.0)(0.2,100.0)(0.22,100.0)(0.24,100.0)(0.26,100.0)(0.28,100.0)(0.3,100.0)(0.32,100.0)(0.34,100.0)(0.36,100.0)(0.38,100.0)(0.4,100.0)(0.42,100.0)(0.44,100.0)(0.46,100.0)(0.48,100.0)(0.5,100.0)	
    };
    \addlegendentry{$k=4$}
\addplot[
    color=green,
    mark=diamond*,
		fill opacity=0.8,
  	draw opacity=0.8,
    ]
    coordinates {
    (0.0,100.0)(0.02,100.0)(0.04,100.0)(0.06,100.0)(0.08,100.0)(0.1,100.0)(0.12,100.0)(0.14,100.0)(0.16,100.0)(0.18,100.0)(0.2,100.0)(0.22,100.0)(0.24,100.0)(0.26,100.0)(0.28,100.0)(0.3,100.0)(0.32,100.0)(0.34,100.0)(0.36,100.0)(0.38,100.0)(0.4,100.0)(0.42,100.0)(0.44,100.0)(0.46,100.0)(0.48,100.0)(0.5,100.0)	
    };
    \addlegendentry{$k=n-1$}
\end{axis}
\end{tikzpicture}
\end{subfigure}
\caption{Percentage of tournaments generated by the gap model with probability~$p$ in which all alternatives are $k$-kings, for $k\in\{2,3,4,n-1\}$ and different sizes $n$ of the tournament.
The horizontal and vertical axes correspond to the probability $p$ and the percentage, respectively. Percentages are taken over 10000 generated tournaments.}
\label{fig:gap-choose-all}
\end{figure}

\end{document}